\definecolor{darkred}{rgb}{0.75,0,0.3}
\newcommand\Af{\mathcal{A}}
\newcommand\AND{\quad\text{and}\quad}
\newcommand\ch{\mathbbm{1}}
\newcommand\ee{\mathbf e}
\newcommand\ep{\varepsilon}
\newcommand\epb{\boldsymbol{\varepsilon}}
\newcommand\Ex{\mathsf{E}}
\newcommand\Gp{\mathfrak{C}}
\newcommand\lb{\boldsymbol{\ell}}
\newcommand\Ll{\mathcal{L}}
\newcommand\Lp{\mathfrak{Lip}}
\newcommand\N{\mathbb N}
\newcommand\pib{\boldsymbol{\pi}}
\newcommand\Prob{\mathsf{Pr}}
\newcommand\R{\mathbb R}
\newcommand\Sf{\mathfrak{S}}
\newcommand\supp{\operatorname{\sf supp}}
\newcommand\taub{\boldsymbol{\tau}}
\newcommand\tb{\mathbf{t}}
\newcommand\wh{\widehat}
\newcommand\wt{\widetilde}
\newcommand\Xx{\mathcal{X}}
\newcommand\Z{\mathbb Z}
\newcommand\zero{\mathbf{0}}
\numberwithin{equation}{section}
\newtheoremstyle{mythm}
  {9pt}
  {9pt}
  {\itshape}
  {0pt}
  {\bfseries}
  {}
  { }
  {\thmnumber{(#2)}\thmname{ #1}\thmnote{ #3}}
\newtheoremstyle{mydef}
  {9pt}
  {9pt}
  {\normalfont}
  {0pt}
  {\bfseries}
  {}
  { }
  {\thmnumber{(#2)}\thmname{ #1}\thmnote{ #3}}
\theoremstyle{mythm}
\newtheorem{thm}[equation]{Theorem.}
\newtheorem{pro}[equation]{Proposition.}
\newtheorem{lem}[equation]{Lemma.}
\newtheorem{cor}[equation]{Corollary.}
\newtheorem{obs}[equation]{Observation.}
\theoremstyle{mydef}
\newtheorem{dfn}[equation]{Definition.}
\newtheorem{exa}[equation]{Example.}
\newtheorem{exs}[equation]{Examples.}
\newtheorem{rmks}[equation]{Remarks.}
\begin{document}$\,$ \vspace{-1truecm}
\title{Multidimensional random walk with reflections}
\author{\bf Judith Kloas, Wolfgang Woess}
\address{\parbox{.8\linewidth}{Institut f\"ur Diskrete Mathematik,\\ 
Technische Universit\"at Graz,\\
Steyrergasse 30, A-8010 Graz, Austria\\}}
\email{kloas@tugraz.at, woess@tugraz.at}
\date{December 6, 2016} 
\thanks{Supported by Austrian Science Fund projects FWF P24028 and W1230 
}
\subjclass[2010] {60G50; 
                  60J05}
                  \keywords{reflected random walk, recurrence, invariant measure, 
local contractivity, stochastic iterated function system}
\begin{abstract}
Reflected random walk in higher dimension arises from an ordinary
random walk (sum of i.i.d. random variables): whenever one of the reflecting
coordinates becomes negative, its sign is changed, and the process continues from
that modified position. One-dimensional reflected random walk is quite
well understood from work in 7 decades, but the multidimensional model presents
several new difficulties. Here we investigate recurrence questions.
\end{abstract}

\maketitle

\markboth{{\sf J. Kloas and W. Woess}}
{{\sf Multidimensional random walk with reflections}}
\baselineskip 15pt

\section{Introduction}\label{sec:intro}

Let $(Y_n)_{n \ge 0}$ be a sequence of i.i.d. real valued random variables,
and let $S_n = Y_1 + \ldots + Y_n$ be the classical associated random
walk. Reflected random walk (RRW) is the process $(X_n^x)_{n \ge 0}$ given by
$$
X_0^x = x \ge 0\,, \qquad X_n^x = |X_{n-1}^x - Y_n|.
$$ 
It was first considered by {\sc von Schelling}~\cite{Sch} 
in the context of telephone networks. A rigorous examination
appeared in {\sc Feller}~\cite{Fe}, and was then developped further
by {\sc Knight~\cite{Kn}}, {\sc Boudiba}~\cite{Bo1}, \cite{Bo2} 
and {\sc Leguesdron}~\cite{Le}. The PhD Thesis of {\sc Benda~\cite{Be}}
and his unpublished papers \cite{Be1}, \cite{Be2} 
contain important contributions that will also play a role here.

Our main interest is in recurrence of this process. Positive recurrence
is settled in the above references via exhibiting a unique stationary
probability measure for the process; proving uniqueness is a non-trivial task. 
Criteria for null recurrence
were given by {\sc Smirnov~\cite{Sm}} and {\sc Rabeherimanana~\cite{Rab}}, 
and also by {\sc Peign\'e and Woess}~\cite{PeWo1}, \cite{PeWo2}.

In the present paper, we are interested in the multidimensonal variant,
where we have a random walk which is reflected 
in the first coordinate(s) and remains an ordinary random walk in the other coordinate(s).
Thus we have a probability measure
$\mu$ on $\R^{r+s}$ and the state space $\R_+^r \times \R^s$, whose elements we write
as $(x,w)$ or just $xw$, where $x \in \R_+^r$ and $w \in \R^s$. 
For $x=(x_1, \dots, x_r) \in \R^r$, we write 
$$
|x| = \bigl(|x_1|, \dots, |x_r|\bigr) \AND \|x\| = \sqrt{x_1^2 + \dots + x_r^2}\,.
$$
We consider a sequence $(Y_n\,,V_n)$ of i.i.d. $\mu$-distributed random vectors
with $Y_n \in \R^r$ and $V_n \in \R^s$. Then our process starting at
$(x,w)$ is given by
\begin{equation}\label{eq:process}
(X_n^x\,, w + Z_n)\,, \quad\text{where}\quad X_0^x = x,\; X_n^x = | X_{n-1}^x - Y_n|\,,
\AND Z_n = V_1 + \dots + V_n\,.
\end{equation}
We shall usually start with $w=0$.
For studying transience / recurrence, only the cases $s \in \{ 0, 1, 2\}$ are of interest,
since otherwise already $(Z_n)$ is transient.

We remark immediately that the process \eqref{eq:process} factorises in each coordinate.
\begin{itemize}
 \item 
If $i \le r$, then the $i$-th coordinate of $(X_n^x\,, v + Z_n)$ is the
reflected random walk on $\R_+$ which starts at $x_i$ and is driven by the $i$-th
marginal $\mu_i$ of $\mu$.
 \item
If $r+1 \le i \le r+s$ then the $i$-th coordinate is the random walk (sum of
i.i.d. random variables) which starts at $v_i$ and whose law is the $i$-th
marginal of $\mu$.
 \item
In particular, $(X_n^x)$ is the reflected random walk on $\R_+^r$ driven by $\mu_{\lfloor r \rfloor}$,
the overall marginal of $\mu$ on the first $r$ coordinates, and $(v+Z_n)$ is the
(ordinary) random walk on $\R^s$ whose law $\mu_{\lceil s \rceil}$ is the overall marginal
of $\mu$ on the last $s$ coordinates.
\end{itemize}

As usual, we shall distinguish between the \emph{lattice} and the \emph{non-lattice} 
cases in each coordinate.
The latttice case arises when there is $\kappa > 0$ such that 
$\supp(\mu_i) \subset \kappa \cdot \Z$.
In this case, we can and will always assume without loss of generality that 
\begin{equation}\label{eq:gcd}
\supp(\mu_i) \subset \Z \AND \gcd \supp(\mu) =1.
\end{equation} 
The marginal $\mu_i$ is non-lattice if no $\kappa$ as above exists.

Thus, we shall assume that $r = r_1 + r_2$ and $s=s_1 + s_2$ such that
the marginals $\mu_i$ satisfy \eqref{eq:gcd} for $i=1, \dots, r_1$ and
$i = r+1, \dots, r+s_1\,$, while they are non-lattice in the other
coordinates. Consequently, it is natural that we restrict our state space to 
\begin{equation}\label{eq:statespace}
\Xx = \N_0^{r_1}\times \R_+^{r_2} \times \Z^{s_1} \times \R^{s_2}\,.
\end{equation}

In the non-discrete situation, our study of recurrence and stationary probability 
distributions focusses on topological recurrence.

One of our basic tools is \emph{local contractivity,} a property of stochastic
dynamical systems that was introduced by {\sc Babillot, Bougerol and Elie}~\cite{BaBoEl}
and studied in detail by {\sc Benda}~\cite{Be}. We summarise the basic facts in the
short \S 2. In \S 3, we review one-dimensional reflected random walk and display the smart
method of \cite{Be} in the lattice case to induce a locally contractive process 
on the even numbers (Proposition \ref{pro:const}). We also display an example 
of a transient reflected random walk where the
non-reflected walk is recurrent.  

In \S 4, we consider the multimdimensional case with
reflection in all coordinates. The main result is Theorem \ref{thm:posrec},
characterising positive recurrence.  While the case where all marginals are non-lattice
is covered by {\sc Peign\'e}~\cite{Pe}, the presence of lattice marginals leads to
considerable additional difficulties which we elaborate in detail. Subsequently, 
we provide several partial results and examples regarding the null-recurrent situation, 
where however a complete characterisation remains a challenging open problem.

In the last \S 5, we consider the general situation where some coordinates are reflected
and others (at most 2) are ``free'' (non-reflected). Our second main result is
Theorem \ref{thm:r-s-rec}, where we assume that the reflected part is (topologically)
recurrent and the non-reflected coordinates are centred and satisfy the
natural moment conditions. While this is easy when the reflected part is discrete (lattice),
additional tools from Ergodic Theory are needed in general, invoking results on
recurrence of stationary random walks which are due to {\sc Atkinson}~\cite{At} and
{\sc Schmidt}~\cite{Schm}.  This leads to recurrence of the process. 
Again, it is a challenging open problem to
handle the case when the reflected part is only null-recurrent.

\section{A summary on local contractivity}\label{sec:local}

We recall a few facts that were explained in \cite{PeWo2}, plus additional
features. Unless otherwise stated, the facts displayed in this
section can be found in \cite{PeWo2}, resp. the remarkable PhD thesis
\cite{Be}.

In general, we consider a proper metric space $(\Xx,d)$  
and the monoid $\Gp(\Xx)$ of all continuous mappings $\Xx \to \Xx$.
It carries the topology of uniform convergence on compact sets.
Now let $\wt\mu$ be a Borel probability measure on $\Gp(\Xx)$, and let 
$(F_n)_{n \ge 1}$ be a sequence of i.i.d. $\Gp(\Xx)$-valued random variables
(functions) with common distribution $\wt\mu$, defined on a suitable 
probability space $(\Omega, \Af, \Prob)$. The associated 
\emph{stochastic dynamical system (SDS)} $\omega \mapsto X_n^x(\omega)$ is given by
\begin{equation}\label{eq:SDS}
X_0^x = x \in \Xx\,,\AND X_n^x = F_n \circ F_{n-1} \circ \dots \circ F_1(x)\,,\quad n \ge 1\,.
\end{equation}

In case of reflected random walk on $\Xx = \N_0^{r_1}\times \R_+^{r_2}$ 
(as  in \eqref{eq:statespace} with $s_1=s_2=0$), we have
$F_n(x) = |x-Y_n|$, and these mappings are contractions,
whence we may replace $\Gp(\Xx)$ by the closed sub-monoid $\Lp_1(\Xx)$
of all Lipschitz mappings with Lipschitz constant~$\le 1$. If $\mu$ is the distribution 
on $\R^d$ of the increments $Y_n$, then $\wt \mu$ is the image of $\mu$ under the
mapping $\R \to \Lp_1(\Xx)$, $y \mapsto f_y$, where $f_y(x) = |x-y|$.

\begin{dfn}\label{def:loccont} The SDS is called \emph{locally contractive,} 
if for every $x\in \Xx$ and every compact $K \subset \Xx$, 
$$
\Prob[d(X_n^x,X_n^y)\, \cdot \ch_K(X_n^x)\to 0 
\quad\text{for all}\; y \in \Xx] = 1\,.
$$
It is called \emph{strongly contractive,} if for 
every $x \in \Xx$, 
$$
\Prob[d(X_n^x,X_n^y)\to 0 \quad\text{for all}\; y \in \Xx] = 1\,.
$$
\end{dfn}

\begin{pro}\label{pro:rec} A locally contractive SDS is either 
\emph{transient,} 
$$
\Prob[d(X_n^x,x) \to \infty] = 1 \quad\text{for every}\;x \in \Xx\,
$$
or (topologically) \emph{recurrent} in the sense that there is a
maximal non-empty closed subset $\Ll \subset \Xx$ with the property that
for every open set $U$ that intersects $\Ll$,
$$
\Prob[X_n^x \in U \;\text{infinitely often}] = 1 \quad\text{for every}\;x \in \Xx.
$$
In the recurrent case, $\Ll$ coincides almost
surely with the set of accumulation points of any trajectory $\bigl(X_n^x(\omega)\bigr)$.

$\Ll$ is also characterised as the 
smallest non-empty closed subset of $\Xx$ with the
property that $f(\Ll) \subset \Ll$ for every $f \in \supp (\wt \mu) \subset \Gp(\Xx)$.
\end{pro}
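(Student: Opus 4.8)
The plan is to study the random set $L^x(\omega)$ of accumulation points of the trajectory $\bigl(X_n^x(\omega)\bigr)_{n\ge 0}$ and to establish successively: (i) $L^x$ does not depend on $x$; (ii) it is almost surely a fixed deterministic closed set $\Ll$, which gives the transience/recurrence dichotomy; (iii) $\Ll$ has the two asserted characterisations. Since $(\Xx,d)$ is proper, closed balls are compact and $\Xx$ is separable; in particular $L^x(\omega)\neq\emptyset$ exactly when the trajectory returns to some compact set infinitely often, i.e. exactly when $d(X_n^x,x)\not\to\infty$. For (i), fix $x,y$: if $z\in L^x(\omega)$, choose a compact neighbourhood $K$ of $z$; then $X_n^x(\omega)\in K$ along a subsequence $n_k\to\infty$, and local contractivity forces $d\bigl(X_{n_k}^x,X_{n_k}^y\bigr)\to 0$, so $X_{n_k}^y(\omega)\to z$ and $z\in L^y(\omega)$; the reverse inclusion is symmetric. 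Fixing a base point $o$ and writing $L:=L^o$, we get $L^x=L$ a.s. for every $x$.

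Step (ii) is a zero--one law. Using the cocycle identity $X_{n+1}^x(\omega)=X_n^{F_1(\omega)(x)}(\theta\omega)$, where $\theta$ is the shift on the i.i.d. sequence $(F_n)_{n\ge 1}$, and the fact that dropping the first term does not change accumulation points, one gets $L^x(\omega)=L^{F_1(\omega)(x)}(\theta\omega)$; since $F_1$ is independent of $\theta\omega$ and $L^{z}(\theta\omega)=L(\theta\omega)$ a.s. for each fixed $z$, this yields $L(\omega)=L(\theta\omega)$ a.s., i.e. $\omega\mapsto L(\omega)$ is shift-invariant modulo null sets. As $(F_n)$ is i.i.d., the shift is ergodic, so for each member $B$ of a countable base of $\Xx$ the invariant event $\{L(\omega)\cap B\neq\emptyset\}$ has probability $0$ or $1$; since a closed set is determined by which basic open sets it meets, $L(\omega)$ equals a fixed deterministic closed set $\Ll$ almost surely. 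If $\Ll=\emptyset$, no trajectory has an accumulation point, so $d(X_n^x,x)\to\infty$ a.s. for every $x$ (transient case). If $\Ll\neq\emptyset$ (recurrent case), the accumulation set of almost every trajectory is $\Ll$, hence for every open $U$ meeting $\Ll$ we have $X_n^x\in U$ infinitely often a.s.; and if a non-empty closed $\Ll'$ has the same property, then using a countable neighbourhood base at any $z\in\Ll'$ we get $z\in L^x=\Ll$ a.s., so $\Ll'\subseteq\Ll$, proving maximality.

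For (iii), first check $f(\Ll)\subseteq\Ll$ for every $f\in\supp(\wt\mu)$. Fix $z\in\Ll$ and a neighbourhood $W=\{g:\sup_{x'\in K}d(g(x'),f(x'))<\ep\}$ of $f$ in $\Gp(\Xx)$, where $K$ is a compact neighbourhood of $z$; then $\wt\mu(W)>0$. The SDS $(X_n^x)$ is a Markov chain, and at each of the almost surely infinitely many times $\tau_k$ at which it enters a small neighbourhood of $z$, the next map $F_{\tau_k+1}$ is independent of the past and $\wt\mu$-distributed; by the conditional Borel--Cantelli lemma, infinitely many $F_{\tau_k+1}$ lie in $W$, and for such $k$ the point $X_{\tau_k+1}^x=F_{\tau_k+1}(X_{\tau_k}^x)$ is close to $f(z)$, by continuity of $f$ and the choice of $W$. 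Letting the neighbourhood of $z$ and $\ep$ shrink along a countable sequence shows $f(z)\in L^x=\Ll$. Conversely, if $\Ll'$ is non-empty closed with $f(\Ll')\subseteq\Ll'$ for all $f\in\supp(\wt\mu)$, then, since $F_n\in\supp(\wt\mu)$ a.s., starting the chain at any $z'\in\Ll'$ keeps all $X_n^{z'}$ in $\Ll'$, whence $\Ll=L^{z'}\subseteq\Ll'$; thus $\Ll$ is the smallest such set.

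The main obstacle is the bookkeeping of the zero--one law: upgrading the pairwise identity $L^x=L^y$ to shift-invariance of a single starting-point-free functional forces one to evaluate $L^{(\cdot)}$ at the random point $F_1(x)$, which is only legitimate because $F_1$ is independent of $\theta\omega$, and one must run the whole argument through a countable base so that ergodicity applies to honestly measurable invariant events. The second delicate point is the conditional Borel--Cantelli argument at the random visit times $\tau_k$ in the proof that $f(\Ll)\subseteq\Ll$, where the topology of uniform convergence on compacta on $\Gp(\Xx)$ must be handled via the compact set $K$ fixed in advance. The remaining ingredients---properness, separability, and continuity of the maps $F_n$---enter only routinely.
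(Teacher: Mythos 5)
There is nothing in the paper to compare against line by line: Proposition \ref{pro:rec} is stated in \S 2, which is explicitly a summary, and its proof is delegated to Benda's thesis \cite{Be} and to \cite{PeWo2}. Measured against those sources, your argument is correct and follows the standard line, with the usual technical points either handled or explicitly flagged. Your three pillars are all sound: (i) the accumulation set $L^x(\omega)$ is a.s.\ independent of the starting point, by applying Definition \ref{def:loccont} along a countable exhaustion of the proper space $\Xx$ by compacta (the a.s.\ event there is tied to a fixed compact $K$, so the countable family is genuinely needed, and the ``symmetric'' reverse inclusion is legitimate because the definition holds for every base point); (ii) $L$ is a.s.\ a deterministic closed set, via the identity $L^x(\omega)=L^{F_1(\omega)(x)}(\theta\omega)$, independence of $F_1$ from $\theta\omega$, and ergodicity of the i.i.d.\ shift tested on the countably many events $\{L\cap B\neq\emptyset\}$, which also yields the transience/recurrence dichotomy and the maximality of $\Ll$; (iii) the identification of $\Ll$ as the smallest non-empty closed set with $f(\Ll)\subset\Ll$ for all $f\in\supp(\wt\mu)$, via a conditional Borel--Cantelli argument at the return times to a small ball around $z\in\Ll$, using a uniform-on-$K$ neighbourhood $W$ of $f$ with $\wt\mu(W)>0$, together with the easy converse inclusion from $F_n\in\supp(\wt\mu)$ a.s. The cited proofs package the zero--one law somewhat differently (working with the events $\{X_n^x\in U \text{ i.o.}\}$ and harmonic-function/L\'evy-type arguments rather than with ergodicity of the shift acting on the random limit set), but that is a difference of organisation, not of substance; your ergodicity route buys a rather transparent proof that the limit set is deterministic. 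The only places where real (if routine) work is left implicit are the joint measurability in $(z,\omega')$ of the event that the accumulation sets from $z$ and from a fixed base point coincide -- needed to make the Fubini/independence step honest -- and the measurability of $\{L\cap B\neq\emptyset\}$; both are resolved by expressing everything through infinitely-often visits to a countable base of relatively compact open sets, exactly as you indicate, so I regard them as flagged rather than missing.
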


Note that the last characterisation does not rely on recurrence; it depends
only on $\supp (\wt \mu)$.
In the recurrent case, the set $\Ll$ is called the \emph{attractor,} 
and the SDS is strongly contractive. 

\smallskip
An \emph{invariant measure} for an SDS is a Radon measure $\nu$ on $\Xx$
such that for any Borel set $B \subset \Xx$,
$$
\int \ch_B(X_1^x) \, d\nu(x) = \nu(B).
$$ 
Part (a) of the following is obvious; for (b) see \cite{PeWo1}.

\begin{pro}\label{pro:invmeas} \emph{(a)} A locally contractive 
SDS which has an invariant probability measure is recurrent.
\\[3pt]
\emph{(b)} A locally contractive SDS which is recurrent
has an invariant measure $\nu$ which is unique up to multiplication by constants.
In this case, the following holds.
\begin{itemize}
\item $\quad \supp(\nu) = \Ll$.
\item $\quad \nu(\Ll) < \infty$ if and only if the SDS 
is positive recurrent\\ \hspace*{7pt} (the return time to any open set
which intersects $\Ll$ has finite expectation).
\end{itemize}
\end{pro}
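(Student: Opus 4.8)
The plan is to dispose of (a) by a one–line Fubini argument and to handle (b) by realising the SDS as a conservative, ergodic recurrent Markov chain and then quoting the ergodic theory of such chains; the genuine work is hidden in the construction of the invariant measure. For (a), write $P(x,\cdot)=\Prob[F_1(x)\in\cdot\,]$ for the one-step kernel, so that an invariant probability $\nu$ satisfies $\nu=\nu P=\nu P^n$ for every $n$. If the SDS were transient, then for each $x$ and each compact $K$ one has $\ch_K(X_n^x)\to 0$ almost surely, hence $P^n(x,K)=\Ex\ch_K(X_n^x)\to 0$ by bounded convergence; since $\nu$ is finite, dominated convergence gives $\nu(K)=\lim_n\int P^n(x,K)\,d\nu(x)=0$. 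As $(\Xx,d)$ is proper, hence $\sigma$-compact, this forces $\nu\equiv 0$, contradicting $\nu(\Xx)=1$; by Proposition \ref{pro:rec} the SDS is recurrent.

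For the existence part of (b): by Proposition \ref{pro:rec} the process enters every open set meeting $\Ll$ infinitely often from every starting point; fix a point of $\Ll$, a relatively compact open neighbourhood $U_0$ of it, and note that the return time $\tau$ to $U_0$ is a.s.\ finite. The plan is to manufacture a regeneration structure for the chain along the successive visits to $U_0$. Local contractivity says precisely that two trajectories started in $U_0$ get uniformly close on the (recurrent) event that they both lie in a prescribed compact set, and, combined with recurrence, this should yield after a bounded number of steps a minorisation $P^{n_0}(x,\cdot)\ge\delta\beta$ on a compact subset of $\Ll$ that is hit infinitely often — in the lattice case this step is trivial, since $\Ll$ then carries atoms. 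A Nummelin-type splitting turns this into a genuine recurrent atom, and one takes $\nu$ to be the expected occupation measure of a single excursion away from that atom; recurrence makes $\nu$ $\sigma$-finite, and it is Radon because a compact set bounded away from $\Ll$ is visited only finitely often during each excursion (Proposition \ref{pro:rec}). It is worth stressing that, unlike in the positive-recurrent case, $\nu$ cannot be obtained as a Cesàro limit of $P^n(x,\cdot)$: in the null-recurrent regime mass escapes to infinity under that normalisation, and the excursion construction is exactly what replaces it.

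For support, uniqueness and the positive-recurrence criterion: for any invariant Radon measure $\nu$, the support $\supp\nu$ is closed, non-empty and satisfies $f(\supp\nu)\subset\supp\nu$ for every $f\in\supp\wt\mu$ — if $x\in\supp\nu$, $f\in\supp\wt\mu$ and $f(x)\notin\supp\nu$, choose an open $U\ni f(x)$ with $\nu(U)=0$ and then, by continuity of composition and evaluation, an open $V\ni x$ and an open $\mathcal O\ni f$ in $\Gp(\Xx)$ with $g(v)\in U$ whenever $g\in\mathcal O$, $v\in V$; invariance then forces $\nu(U)\ge\wt\mu(\mathcal O)\,\nu(V)>0$, a contradiction. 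By the minimality characterisation of $\Ll$ in Proposition \ref{pro:rec}, $\Ll\subset\supp\nu$. For the reverse inclusion, view $\nu$ as an invariant measure for the Markov shift on path space; recurrence makes this shift conservative, so Poincaré recurrence applies, yet every compact set bounded away from $\Ll$ is visited only finitely often by every trajectory — so such a set is $\nu$-null and $\supp\nu=\Ll$. Uniqueness follows because the chain is conservative and ergodic: local contractivity forces coupling of trajectories on compacta, which trivialises the shift-invariant events, while topological transitivity on $\Ll$ (again minimality of $\Ll$) excludes non-trivial invariant sets, so any two invariant Radon measures — both supported on $\Ll$, hence both absolutely continuous with respect to their sum — have constant Radon–Nikodym derivative and are proportional. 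Finally, normalising $\nu(U_0)=1$, Kac's formula gives $\nu(\Ll)=\nu(\Xx)=\Ex_{U_0}[\tau]$, so $\nu(\Ll)<\infty$ iff the expected return time to $U_0$ is finite, and a routine excursion comparison shows this property does not depend on the chosen open set meeting $\Ll$.

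The main obstacle is the existence half of (b): extracting from mere local contractivity together with topological recurrence enough regeneration structure to run an excursion construction of a \emph{$\sigma$-finite} invariant measure, and, for uniqueness, the accompanying conservativity and ergodicity statements. This is precisely where Benda's ideas and the machinery of \cite{PeWo1} are required; once the process is known to be a conservative, ergodic, recurrent Markov chain living on $\Ll$, the remaining assertions are standard Markov-chain ergodic theory.
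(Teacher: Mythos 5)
Your part (a) is correct and is precisely the one-line argument the paper regards as obvious; note, however, that the paper does not prove this proposition at all — (a) is declared obvious and (b) is quoted from Benda's thesis and Peign\'e--Woess (\cite{Be}, \cite{PeWo1}, \cite{PeWo2}) — so the comparison is with the route taken in those sources, which is not the one you propose.

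The existence half of (b), as you set it up, has a genuine gap. You want a minorisation $P^{n_0}(x,\cdot)\ge\delta\beta$ on a compact subset of $\Ll$, i.e.\ you want the chain to be $\psi$-irreducible with a small set, so that Nummelin splitting produces an atom and an excursion (Kac) measure. Local contractivity gives nothing of this kind: it couples trajectories driven by the \emph{same} realisation of the maps $F_n$, but it entails no regularisation of the one-step kernel, and a locally contractive recurrent SDS is in general not $\psi$-irreducible, let alone Harris. Concretely, take reflected random walk with $\mu=\tfrac12(\delta_1+\delta_{\sqrt2})$: it is non-lattice, hence locally contractive by Proposition \ref{pro:contractive}, and positive recurrent with attractor $\Ll=[0,\sqrt2\,]$ and invariant measure $\mu\bigl((x,\infty)\bigr)\,dx$ by Lemma \ref{lem:invmeas} and Corollary \ref{cor:cond0}. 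Yet from a starting point $x$ the whole trajectory stays in the countable set $\{\pm x+m+n\sqrt2 : m,n\in\Z\}\cap\R_+$, and for Lebesgue-almost every pair $x,x'$ these orbits are disjoint, so all measures $P^{n}(x,\cdot)$ and $P^{m}(x',\cdot)$ are mutually singular: there is no irreducibility measure, no small set, and no common component $\beta$, so the regeneration construction you build on cannot be run, even though the conclusion of the proposition holds. This failure of Harris theory is exactly the reason the local-contractivity machinery exists; the actual proofs work directly with the contraction structure (occupation measures of excursions around a fixed compact, strong contractivity on the attractor, and a ratio-limit/Chacon--Ornstein type argument for uniqueness), not through splitting. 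The same issue infects your uniqueness step: equality of supports does not give mutual absolute continuity of two invariant Radon measures, and ``coupling trivialises shift-invariant events'' is again a Harris-style assertion — without a regeneration structure, the conservativity-plus-ergodicity package you invoke is precisely what must be proved. By contrast, your argument that $\supp(\nu)\supset\Ll$ (invariance of $\supp(\nu)$ under $\supp(\wt\mu)$ plus minimality of $\Ll$) and the Kac-formula equivalence for positive recurrence are sound once the existence/uniqueness theory on $\Ll$ is already in place.
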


For an SDS of contractions,
let $\Sf(\wt \mu)$ be the sub-semigroup of $\Lp_1(\Xx)$ generated by 
$\supp(\wt \mu)$ and $\overline\Sf(\wt \mu)$ its closure. 

\begin{pro}\label{pro:const}  A non-transient SDS of contractions is locally contractive
if and only if $\overline\Sf(\wt\mu)$ contains a constant function.
In this case, it is recurrent as well as strongly contractive, so that it is
absorbed by the attractor: for any starting point $x$, 
$$
d(X_n^x,\Ll) \to 0 \quad \text{almost surely.}
$$
\end{pro}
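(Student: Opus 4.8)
The plan is to prove the two implications of the stated equivalence separately and then to obtain the three concluding assertions from Proposition~\ref{pro:rec} together with the estimates built along the way.

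\emph{If $\overline\Sf(\wt\mu)$ contains a constant, then the SDS is locally contractive.} Fix a starting point $x$ and a compact set $K\subseteq\Xx$. The first step is a uniform ``contracting block'' estimate: for every $\ep>0$ there are $m\in\N$ and $q>0$ such that
\[
\Prob\bigl[\operatorname{diam}\{F_{n+m}\circ\dots\circ F_{n+1}(z):z\in K\}<\ep\bigr]\ge q \quad\text{for every }n\ge0.
\]
This follows by expressing the given constant map in $\overline\Sf(\wt\mu)$ as a uniform-on-$K$ limit of finite compositions $h_m\circ\dots\circ h_1$ with $h_i\in\supp(\wt\mu)$, and combining continuity of composition on $\Lp_1(\Xx)$ with the fact that $\wt\mu$ charges every neighbourhood of each $h_i$. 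Since the $F_n$ are contractions, $n\mapsto d(X_n^x,X_n^y)$ is non-increasing and thus converges to some $D(x,y)\ge0$; moreover, whenever $X_n^x\in K$ one has $X_n^y\in K':=\{z:d(z,K)\le d(x,y)\}$, which is compact because $\Xx$ is proper. Applying the block estimate with $K'$ in place of $K$ at the successive stopping times $\rho_k$ at which $X_n^x$ revisits $K$ (chosen at least $m$ apart), the strong Markov property makes the blocks attached to the $\rho_k$ conditionally independent of the past, each contracting $K'$ to diameter $<\ep$ with probability $\ge q$; a conditional Borel--Cantelli argument then shows that on $\{X_n^x\in K \text{ infinitely often}\}$ infinitely many of these blocks do so, so that $D(x,y)\le\ep$ there, and letting $\ep\downarrow0$ gives $D(x,y)=0$ on that event. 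Off that event $\ch_K(X_n^x)=0$ eventually, so $d(X_n^x,X_n^y)\,\ch_K(X_n^x)\to0$ almost surely; a triangle-inequality argument over a countable dense set of $y$ upgrades this to all $y$ simultaneously, which is local contractivity.

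\emph{If the SDS is locally contractive and non-transient, then $\overline\Sf(\wt\mu)$ contains a constant.} By Proposition~\ref{pro:rec} the SDS is then recurrent, with non-empty attractor $\Ll$ satisfying $f(\Ll)\subseteq\Ll$ for every $f\in\supp(\wt\mu)$, and it is strongly contractive. Fix $\ell_0\in\Ll$. On the almost sure event where $d(X_n^{\ell_0},X_n^{z'})\to0$ for all $z'$, the functions $z'\mapsto d(X_n^{\ell_0},X_n^{z'})$ are continuous, non-negative and decrease to $0$ as $n\to\infty$, so Dini's theorem yields $\operatorname{diam}\{X_n^z:z\in K\}\to0$ for every compact $K$; moreover, since $\Ll$ is almost surely the set of accumulation points of $(X_n^{\ell_0})$, there are arbitrarily large $n$ with $d(X_n^{\ell_0},\ell_0)$ arbitrarily small. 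Exhausting $\Xx$ by closed balls $K_j$ about a fixed base point and choosing, for each $j$, a time $n$ with $\operatorname{diam}\{X_n^z:z\in K_j\}<1/j$ and $d(X_n^{\ell_0},\ell_0)<1/j$, the map $g_j:=F_n\circ\dots\circ F_1\in\Sf(\wt\mu)$ satisfies $\sup_{z\in K_j}d(g_j(z),\ell_0)<2/j$. Hence $g_j$ converges uniformly on compact sets to the constant map with value $\ell_0$, so that constant lies in $\overline\Sf(\wt\mu)$.

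\emph{The concluding assertions.} Under the equivalent conditions the SDS is recurrent by Proposition~\ref{pro:rec} (via non-transience) and hence, by the remark following that proposition, strongly contractive. For the absorption statement, $\supp(\wt\mu)$-invariance of $\Ll$ gives $X_n^{\ell_0}=F_n\circ\dots\circ F_1(\ell_0)\in\Ll$ for all $n$, almost surely, for any fixed $\ell_0\in\Ll$, whence $d(X_n^x,\Ll)\le d(X_n^x,X_n^{\ell_0})\to0$ almost surely by strong contractivity. I expect the main obstacle to be the first implication, precisely the synchronisation of the returns of $(X_n^x)$ to $K$ with the occurrence of contracting blocks: since the return times are random one cannot apply the second Borel--Cantelli lemma directly to independent blocks at fixed positions, and the bookkeeping must instead be arranged around the stopping times $\rho_k$ and a conditional Borel--Cantelli estimate. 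A secondary difficulty lies in the converse, in passing from ``$\Sf(\wt\mu)$ contracts every compact set'' to ``$\overline\Sf(\wt\mu)$ contains a genuine constant'': here recurrence is exactly what prevents the contracting images $g_j(K_j)$ from drifting off to infinity, so that a true limit function exists.
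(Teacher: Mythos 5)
Your proposal is essentially correct, but note that the paper itself contains no proof of this proposition: it is quoted from the literature (see \cite{Le}, \cite{Pe}, \cite{Be} and Theorem 4.2 of \cite{PeWo1}), so there is nothing internal to compare against, and what you have written is a self-contained reconstruction along the same circle of ideas as those references — approximation of the constant map by finite blocks of maps from $\supp(\wt\mu)$, a conditional Borel--Cantelli argument along return times of $(X_n^x)$ to a compact set, and, for the converse, Dini-type uniform contraction of compacta combined with returns of $X_n^{\ell_0}$ near $\ell_0\in\Ll$. Two points deserve to be made explicit. First, you use twice that $F_i\in\supp(\wt\mu)$ almost surely (to get $F_n\circ\dots\circ F_1\in\Sf(\wt\mu)$, and again to get $X_n^{\ell_0}\in\Ll$ in the absorption step); this is the standard fact that a random variable lies a.s.\ in the support of its law, valid here because $\Lp_1(\Xx)$ with the topology of uniform convergence on compacta is separable and metrizable when $\Xx$ is proper, and it should be stated. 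Second, your converse and your concluding assertions lean on the remark following Proposition \ref{pro:rec} (``recurrent $\Rightarrow$ strongly contractive''), which is legitimate as a quoted fact, but you could avoid the dependence: your first implication already shows that $d(X_n^x,X_n^y)\to 0$ for all $y$ on the event that $X_n^x$ visits a fixed compact $K$ infinitely often, and in the recurrent case this event has probability $1$ when $K$ is a compact neighbourhood of a point of $\Ll$; this yields strong contractivity directly, makes the proof self-contained, and feeds straight into your absorption argument via $f(\Ll)\subset\Ll$ for all $f\in\supp(\wt\mu)$. With these small additions the argument is complete.
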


See \cite{Le}, \cite{Pe}, \cite{Be} and \cite[Theorem 4.2]{PeWo1}.
An important tool is going to be the following.

\begin{pro}\label{pro:reverse} Suppose that our SDS of contractions is locally contractive and
 has an invariant probability measure $\nu$. Then there is an $\Xx$-valued
random variable $Z$ such that for any starting point $x \in X\,$,
$$
\wh X_n^x = F_1 \circ \dots \circ F_n(x) \to Z \quad \text{almost surely.}
$$
The distribution of $Z$ is $\nu$.
\end{pro}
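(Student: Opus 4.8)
The plan is to exploit the classical fact that the ``backward'' iterations $\wh X_n^x = F_1 \circ \dots \circ F_n(x)$ have, for each fixed $n$, the same distribution as the ``forward'' iterations $X_n^x = F_n \circ \dots \circ F_1(x)$, since $(F_1, \dots, F_n)$ and $(F_n, \dots, F_1)$ are equal in law (the $F_i$ are i.i.d.). So $\wh X_n^x \stackrel{d}{=} X_n^x$ for every $n$, and since $\nu$ is an invariant probability measure and the SDS is recurrent, the forward process $X_n^x$ converges in distribution to $\nu$ (this is standard for recurrent locally contractive SDS with invariant probability measure; it follows from Propositions \ref{pro:invmeas} and \ref{pro:const} together with the Markov structure). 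Hence, if $\wh X_n^x$ converges almost surely to some limit $Z$, that limit must have distribution $\nu$. The real content is therefore the almost sure convergence of the backward chain.

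For the almost sure convergence, first I would reduce to showing that $(\wh X_n^x)_n$ is almost surely a Cauchy sequence in the proper metric space $(\Xx,d)$, which then forces convergence to some limit $Z = Z(\omega)$; a separate argument (or the in-distribution identification) shows $Z$ is the same for all starting points $x$ up to a null set and, combined with local contractivity, that $d(\wh X_n^x, \wh X_n^y) \to 0$. The key estimate is this: because the $F_i$ are contractions (Lipschitz constant $\le 1$),
$$
d\bigl(\wh X_{n+1}^x, \wh X_n^x\bigr) = d\bigl(F_1 \cdots F_n(F_{n+1}(x)), F_1 \cdots F_n(x)\bigr) \le d\bigl(F_2 \cdots F_n(F_{n+1}(x)), F_2 \cdots F_n(x)\bigr),
$$
and the right-hand side, after relabelling the i.i.d. functions, has the same law as $d(X_{n-1}^{F_1(x)}, X_{n-1}^x)$ — wait, more carefully: the increment $d(\wh X_{n+1}^x, \wh X_n^x)$ is equal in distribution to $d(X_n^{x'}, X_n^x)$ where $x' = F(x)$ for an independent copy $F$ of the driving function. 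Strong contractivity (which holds here by Proposition \ref{pro:const}, since a recurrent locally contractive SDS of contractions is strongly contractive) gives that $d(X_n^{x'}, X_n^x) \to 0$ almost surely, hence in probability, hence the increments $d(\wh X_{n+1}^x, \wh X_n^x) \to 0$ in probability. To upgrade this to summability along the backward chain — which is what actually yields the Cauchy property — I would use the monotonicity built into the backward construction: the quantity $D_n := \sup_{m \ge n} d(\wh X_m^x, \wh X_n^x)$ is controlled because once two trajectories started at nearby points get close under $F_1 \cdots F_n$, applying further maps $F_{n+1}, \dots$ keeps them close (composition on the inside, contraction on the outside). Concretely, $d(\wh X_m^x, \wh X_n^x) \le d\bigl(F_1 \cdots F_n(a_{m}), F_1 \cdots F_n(x)\bigr)$ where $a_m = F_{n+1} \cdots F_m(x)$ ranges over points, and one shows this is dominated by $\sup_{y} d(X_n^y \text{-type quantity})$ restricted appropriately, which tends to $0$ by strong contractivity together with recurrence (trajectories accumulate on the attractor $\Ll$, a fixed compact-or-controlled set, so the relevant suprema are over effectively compact sets).

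The main obstacle is precisely this last upgrade: turning ``increments go to $0$ in probability / almost surely'' into ``the backward sequence is almost surely Cauchy.'' Unlike the forward chain, the backward chain is not a Markov chain, so one cannot invoke recurrence directly for it; instead one must transfer information via the law-equality with the forward chain at each fixed time, while being careful that the null sets depend on $n$. The clean way is: fix $\ep > 0$ and a compact $K \supset$ a neighbourhood of the part of $\Ll$ that trajectories visit; by strong contractivity and recurrence there is (a.s.) a random $N$ such that for all $n \ge N$, $\diam\bigl(F_1 \cdots F_n(\{x, \wh X_{n+1}^x, \wh X_{n+2}^x, \dots\})\bigr) < \ep$, using that all the points $\wh X_m^x$ ($m > n$) are images under $F_1 \cdots F_n$ of points in a set whose $F_1 \cdots F_n$-image has small diameter — this is where recurrence (accumulation on $\Ll$, hence the ``inner'' points lie in a controlled region) and strong contractivity (that region gets crushed) combine. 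Once the Cauchy property is in hand, completeness of the proper space $(\Xx,d)$ gives $\wh X_n^x \to Z$ a.s., independence of $Z$ from $x$ follows from $d(\wh X_n^x, \wh X_n^y) \to 0$ (same argument applied to two starting points), and the distributional identification $Z \sim \nu$ follows from $\wh X_n^x \stackrel{d}{=} X_n^x \Rightarrow \nu$.
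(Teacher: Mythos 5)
The paper does not actually prove this proposition: it refers to Leguesdron \cite{Le} and Peign\'e \cite{Pe} for the case $\Xx=\R^r$ and only remarks that the argument extends to proper metric spaces, so the comparison has to be with what a complete argument requires. Your preliminary reductions are sound: the time reversal $(F_1,\dots,F_n)$ equal in law to $(F_n,\dots,F_1)$, the resulting identity in law of $\wh X_n^x$ and $X_n^x$ at each fixed $n$, the correct computation showing that the increment $d(\wh X_{n+1}^x,\wh X_n^x)$ has the law of $d(X_n^{F(x)},X_n^x)$ with $F$ an independent copy, and hence tends to $0$ in probability by strong contractivity (which is indeed available via Propositions \ref{pro:invmeas}(a) and \ref{pro:const}). (For the identification of the limit law, a cleaner route than ``$X_n^x\to\nu$ in distribution'' -- which is usually itself a corollary of this proposition -- is to note that $Z=F_1(Z')$ with $Z'$ an independent copy of $Z$, so the law of $Z$ is an invariant probability, unique by Proposition \ref{pro:invmeas}(b).)

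The genuine gap is exactly at the step you yourself flag as the main obstacle, and the fix you propose does not close it. Two problems. First, the ``inner'' points $a_m=F_{n+1}\circ\dots\circ F_m(x)$, $m>n$, are themselves backward iterations (of the shifted i.i.d.\ sequence); nothing established so far confines them to a compact or otherwise controlled region. Recurrence and absorption by the attractor are statements about \emph{forward} trajectories, $\Ll$ need not be compact (e.g.\ reflected random walk with unbounded increments has $\Ll=\R_+$), and a.s.\ control of the $a_m$ is essentially equivalent to the boundedness/convergence of the backward chain that you are trying to prove -- so the appeal to ``recurrence puts the inner points in a controlled region'' is circular. Second, even for a fixed compact $K$ one needs $\operatorname{diam}\bigl(F_1\circ\dots\circ F_n(K)\bigr)\to 0$ \emph{almost surely}; the reversal gives equality in law with $\operatorname{diam}\bigl(F_n\circ\dots\circ F_1(K)\bigr)$ only at each fixed $n$, and while the forward diameters are monotone and tend to $0$ a.s., the backward diameters are not monotone (they would be if $F(K)\subset K$ for every $F\in\supp(\wt\mu)$, but in general no compact invariant set exists), so one only gets convergence in probability -- which is the same upgrade problem again, with null sets depending on $n$. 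This a.s.\ control of the backward compositions is the real content of the proposition; the proofs in \cite{Le} and \cite{Pe} obtain it by exploiting the invariant measure/stationarity in an essential way (working with a stationary realisation of the chain), not merely the fixed-time reversal. As written, your argument yields the distributional statements but not the asserted almost sure convergence.
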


Note that in general, $(\wh X_n^x)_{n\ge 0}$ is not Markovian.
The proposition is proved in \cite{Le} and \cite{Pe} under the assumption that $\Xx =\R^r$. 
In \cite{Pe}, it concerns more general SDS of contractions which are not necessarily compositions 
of i.i.d. mappings, but driven by a positive recurrent Markov chain. 
It readily extends to any proper metric space $\Xx$ in place of $\R^r$. 

\section{A review of one-dimensional reflected random walk}\label{sec:dimone}

Here, the $Y_n$ are real random variables with common distribution $\mu$. We always assume
that
\begin{equation}\label{eq:nontriv}
\mu\bigl((0\,,\,\infty)\bigr) > 0\,.
\end{equation}
The state space is $\Xx = \R_+$ in the non-lattice case, and $\Xx = \N_0 = \{ 0, 1, \dots\}$
in the lattice case \eqref{eq:gcd}.

\medskip

\noindent\textbf{A. Irreducibility and local contractivity}
$\,$

\smallskip

\noindent
We set
\begin{equation}\label{eq:Ll}
\begin{aligned}
N &= \sup \supp(\mu)\,,\quad \text{if}\quad \supp(\mu) \subset \R_+\,,\quad\text{resp.}\quad
N = \infty\,,\quad \text{otherwise, and}\\
\Ll &= [0\,,\,N] \cap \Xx\,, \quad\text{if}\quad N < \infty\,,\quad\text{resp.}\quad
\Ll = \R_+ \cap \Xx\,, \quad\text{if}\quad N = \infty\,.
\end{aligned}
\end{equation}
Then $(X_n^x)$ is (topologically) irreducible  
on $\Ll$, see \cite{Le}, \cite{Bo2}, \cite{Rab}, \cite{PeWo1}, \cite{PeWo2}.
Regarding local contractivity, the following is known;
compare with \cite{Le}, \cite{Be}, \cite{Be1}, \cite{PeWo1} and \cite{PeWo2}.
 
\begin{pro}\label{pro:contractive} Assume that $\mu$ is non-lattice and satisfies 
\eqref{eq:nontriv}. 
Then the reflected random walk induced by $\mu$ is locally contractive.  
\end{pro}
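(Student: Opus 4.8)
The plan is to use Proposition \ref{pro:const}: since the SDS of reflections $F_n(x)=|x-Y_n|$ consists of contractions (Lipschitz constant $1$), it suffices to show that the closed semigroup $\overline\Sf(\wt\mu)$ contains a constant function, and that the walk is non-transient. Actually, for local contractivity alone, once we know $\overline\Sf(\wt\mu)$ contains a constant we are done in the non-transient case; in the transient case local contractivity holds trivially because $d(X_n^x,X_n^y)\to 0$ need only be tested against the indicator of a compact set, and a transient trajectory eventually leaves every compact set. So the real content is: exhibit in $\overline\Sf(\wt\mu)$ a constant map, or else argue directly that trajectories from different starting points couple on compacta.

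The key computation is with compositions $f_{y_1}\circ f_{y_2}\circ\cdots\circ f_{y_k}$ where $f_y(x)=|x-y|$. First I would record the elementary fact that if $x,x'\in[0,N']$ for some $N'$ and we apply $f_y$ with $y\ge N'$, then $f_y(x)=y-x$ and $f_y(x')=y-x'$, so the map acts affinely (reflecting, orientation-reversing) on that interval; composing two such reflections restores orientation and translates. More importantly, I would use the non-lattice hypothesis together with \eqref{eq:nontriv}: because $\mu$ is non-lattice and charges $(0,\infty)$, the set of finite sums $\pm y_1\pm y_2\cdots$ with $y_i\in\supp(\mu)$ is dense in $\R$ (or the relevant half-line), and by choosing signs via reflections one can steer two nearby points to the same location up to arbitrarily small error — equivalently, one produces maps in $\Sf(\wt\mu)$ that compress a given compact interval into an arbitrarily short interval around a prescribed point, whose closure therefore contains a constant. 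A convenient concrete route: pick $a\in\supp(\mu)$ with $a>0$; starting from any $x$ in a fixed compact $[0,M]$, repeatedly applying $f_a$ decreases $x$ by $a$ until it lands in $[0,a)$, after which the dynamics becomes the "tent"-type reflection on $[0,a)$; then non-latticeness of $\mu$ is exploited to find increments $y$ with $\{y\}$ (reflected residues) dense, forcing the diameter of the image of $[0,a)$ under a suitable random word to shrink to $0$. This yields a constant in $\overline\Sf(\wt\mu)$.

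The main obstacle I anticipate is the density/shrinking argument in the non-lattice case: one must show not merely that reachable positions are dense, but that a \emph{single} composition of reflection maps can bring the \emph{whole} interval $\Ll\cap[0,M]$ close to a point, i.e. that contraction happens uniformly rather than pointwise along a possibly different word for each starting point. I would handle this by the standard "folding" observation: $f_y$ restricted to $[0,y]$ is an isometry but with a reflection at $0$ occurring when the argument would go negative, so composing $f_{y_1}\circ f_{y_2}$ with suitably chosen $y_1<y_2$ maps a long interval onto a strictly shorter one (the part that "folds over" $0$ gets overlaid), and iterating with increments furnished by the non-lattice support drives the length to $0$. Once a constant lies in $\overline\Sf(\wt\mu)$, Proposition \ref{pro:const} gives local contractivity in the non-transient case, and transience gives it for free, completing the proof; the references \cite{Le}, \cite{Be}, \cite{Be1}, \cite{PeWo1}, \cite{PeWo2} carry out exactly this folding argument and I would cite them for the routine estimates rather than reproduce every inequality.
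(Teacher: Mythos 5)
Your plan is essentially the paper's own route: the paper states Proposition \ref{pro:contractive} without proof, referring to \cite{Le}, \cite{Be}, \cite{Be1}, \cite{PeWo1}, \cite{PeWo2}, and those references argue exactly as you propose --- dispose of the transient case trivially and otherwise exhibit a constant map in $\overline\Sf(\wt\mu)$ via the folding/density-of-the-generated-group argument, then invoke Proposition \ref{pro:const}; this is also the same semigroup technique the paper carries out explicitly for the lattice analogue in Proposition \ref{pro:even}. So the proposal is correct in outline and matches the intended argument.
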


In the lattice case, we cannot have local contractivity. Indeed, if $x, y \in \N_0$ then
$X_n^x-X_n^y$ always has the same parity as $x-y$. However, the PhD thesis \cite{Be} contains
a smart observation \& method which we now explain. For the remainder of this sub-section,
we assume that $\mu$ satisfies \eqref{eq:gcd}. 

For $x \in \Z$, let $\pi(x) = 0$ if $x$ is even, and $\pi(x) = 1$ if $x$ is odd. Then the 
following is obvious.

\begin{lem}\label{lem:evodd} The process $\bigl(\pi(X_n^x)\bigr)_{n \ge 0}$ is a Markov chain
on $\{0,1\}$ with transition probabilities
$$
p(0,0) = p(1,1) = \mu(2 \cdot\Z) \AND p(0,1) = p(1,0) = \mu(2 \cdot\Z + 1)\,.
$$
\end{lem}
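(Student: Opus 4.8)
The plan is to observe that passing to the absolute value does not affect parity, so that the parity process is driven directly by the parities of the increments $Y_n$, which are i.i.d.\ $\{0,1\}$-valued once we apply $\pi$. First I would record the elementary arithmetic facts that for $a\in\Z$ one has $\pi(|a|)=\pi(a)$ and $\pi(a-b)\equiv\pi(a)+\pi(b)\pmod 2$. Combining these with the defining recursion $X_n^x=|X_{n-1}^x-Y_n|$ immediately gives
$$
\pi(X_n^x)\equiv \pi(X_{n-1}^x)+\pi(Y_n)\pmod 2\,,
$$
i.e.\ $\pi(X_n^x)$ is obtained from $\pi(X_{n-1}^x)$ by flipping the bit exactly when $Y_n$ is odd.

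Next I would invoke independence. Since the $(Y_n)$ are i.i.d.\ and each $X_k^x$ with $k\le n-1$ is a measurable function of $Y_1,\dots,Y_k$ alone, the variable $Y_n$ — and hence the $\{0,1\}$-valued variable $\pi(Y_n)$ — is independent of $\sigma(Y_1,\dots,Y_{n-1})$, and therefore of $(X_0^x,\dots,X_{n-1}^x)$ and a fortiori of $\bigl(\pi(X_0^x),\dots,\pi(X_{n-1}^x)\bigr)$. Consequently, conditionally on the entire past of the parity process, $\pi(X_n^x)$ equals $\pi(X_{n-1}^x)$ with probability $\Prob[\pi(Y_n)=0]=\mu(2\cdot\Z)$ and equals $1-\pi(X_{n-1}^x)$ with probability $\Prob[\pi(Y_n)=1]=\mu(2\cdot\Z+1)$. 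This is precisely the Markov property together with the asserted transition matrix, because flipping $0$ yields $1$ and flipping $1$ yields $0$, so that $p(0,0)=p(1,1)=\mu(2\cdot\Z)$ and $p(0,1)=p(1,0)=\mu(2\cdot\Z+1)$.

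There is no real obstacle here — the statement is essentially a bookkeeping exercise — and the only point that deserves an explicit line is the independence of $Y_n$ from the $\sigma$-algebra generated by the past of the chain, which, as noted above, is immediate from the i.i.d.\ assumption and the fact that $X_{n-1}^x$ depends only on $Y_1,\dots,Y_{n-1}$.
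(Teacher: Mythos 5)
Your argument is correct, and it is exactly the reasoning the paper treats as obvious (it states the lemma without proof): parity is unaffected by the absolute value, so $\pi(X_n^x)\equiv\pi(X_{n-1}^x)+\pi(Y_n)\bmod 2$, and independence of $Y_n$ from the past gives the Markov property and the stated transition probabilities. Nothing further is needed.
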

In particular, it depends only on the parity of the starting point $x$, and by \eqref{eq:gcd}
it is irreducible. It is therefore positive recurrent, the return times to each of the
two states coincide, their distribution is easily computed, and the expected value is $2$.
We can consider the induced process on $2\cdot \N_0\,$, resp. on $2\cdot \N_0 + 1$. 
That is, we consider the a.s. finite stopping times 
\begin{equation}\label{eq:stop}
 \begin{aligned}
\tb(0)&=0\,, \quad \text{and, setting} \quad S_k = Y_1 + \dots + Y_k\,,\\ 
\tb(n)& = \inf \{ k > \tb(n-1) : \pi(X_k^x) = \pi(x) \} =  \inf \{ k > \tb(n-1) : S_k \; \text{is even}\, \}\,.
 \end{aligned}
\end{equation}
No matter whether the starting point of $(X_n^x)$ is even or odd,
the induced process $(X_{\tb(n)}^x)$ on the respective class $2\cdot \N_0$ or $2 \cdot \N_0 + 1$ is again
an SDS generated by i.i.d. contractions:
\begin{equation}\label{eq:induced}
X_{\tb(n)}^x = F_n \circ F_{n-1} \circ \dots \circ F_1(x) \quad \text{with} \quad
F_n = f_{Y_{\tb(n)}} \circ f_{Y_{\tb(n)-1}} \circ \dots \circ f_{Y_{\tb(n-1)+1}}\,.
\end{equation}

Let $\wt \mu_{\tb}$ be the distribution of $F_1$ on $\Lp_1(\N_0)\,$. Since the proof of 
the following is not easily accessible \cite{Be}, we present it here.

\begin{pro}\label{pro:even} If $\mu$ satisfies \eqref{eq:gcd} then 
$\ch_{2\cdot \N_0} \in \overline\Sf(\wt \mu_{\tb})$. 
\end{pro}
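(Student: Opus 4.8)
\emph{Proof proposal.}
The idea is to pin down $\Sf(\wt\mu_\tb)$ exactly, reduce the claim to finitely many steps, and then produce $\ch_{2\cdot\N_0}$ as a uniform‑on‑compacts limit of reflection iterates. First, if $(y_1,\dots,y_m)$ and $(y_1',\dots,y_\ell')$ are both admissible for the rule \eqref{eq:stop} -- i.e.\ $S_j=y_1+\dots+y_j$ is odd for $1\le j<m$ and even for $j=m$, and likewise for the primed word -- then their concatenation is admissible, while conversely any word $(y_1,\dots,y_n)$ with $y_i\in\supp(\mu)$ and $y_1+\dots+y_n$ even breaks, at its even partial sums, into admissible blocks. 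Hence $\Sf(\wt\mu_\tb)$ contains precisely the maps $F=f_{y_n}\circ\dots\circ f_{y_1}$ with $y_i\in\supp(\mu)$ and $y_1+\dots+y_n$ even -- the reflected‑walk maps of even total increment. Every such $F$ preserves parity (indeed $|f_y(x)-f_y(x+1)|=1$ for all $y$, so two neighbours never coalesce), which is why one works on the single class $2\cdot\N_0$, which we identify with $\N_0$ by halving; on it, $\ch_{2\cdot\N_0}$ is the natural limit. It now suffices to find, for every $L$, some $g_L\in\Sf(\wt\mu_\tb)$ agreeing with $\ch_{2\cdot\N_0}$ on $\{0,1,\dots,L\}$: composing the $g_L$ along a fast enough sequence of levels (using $f_y([0,L])\subset[0,\max(y,L)]$) gives a sequence in $\Sf(\wt\mu_\tb)$ converging to $\ch_{2\cdot\N_0}$ uniformly on finite sets, and $\overline\Sf(\wt\mu_\tb)$ is closed.

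To construct $g_L$ we use \eqref{eq:nontriv} together with $\gcd\supp(\mu)=1$. The model case is the unit reflection $r\colon x\mapsto|x-1|$: one computes $r^{\circ m}(x)=x-m$ for $x\ge m$ and $r^{\circ m}(x)\equiv m-x\pmod 2$ for $x<m$, so for $m$ odd with $m>L$ the iterate $r^{\circ m}$ coincides on $\{0,\dots,L\}$ with $\ch_{2\cdot\N_0}$. In the halved coordinate $r$ is exactly the map induced by the increment $2$ of $\mu$, so each $r^{\circ m}$ lies in $\Sf(\wt\mu_\tb)$; thus if $2\in\supp(\mu)$ we may take $g_L=r^{\circ m}$. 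In general, $\gcd\supp(\mu)=1$ and the presence of a positive atom allow one to realize the effect of a single unit step on the segment $\{0,\dots,L\}$ by a composition of folds $f_a$ and up‑shifts $f_{-a}\colon x\mapsto x+a$ ($a\in\supp(\mu)$): a large reflection first corrals $[0,L]$ into a bounded range -- the device familiar from the one‑dimensional irreducibility and local‑contractivity arguments in \cite{Be}, \cite{PeWo1} -- a B\'ezout combination of reflections then acts there like $x\mapsto|x-1|$, and one appends an extra $f_a\circ f_a$ whenever needed so the total increment stays even; an odd number of such ``effective unit steps'' gives $g_L$.

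The decisive, and only nonroutine, point is this construction of $g_L$: since a single reflection merely halves a segment and never identifies two neighbours, collapsing a long initial segment within its parity class forces a careful choice and ordering of several reflections with matched parameters, all subject to the even‑total‑increment constraint. This combinatorial step is the real content of the proposition and precisely the part of \cite{Be} that is not readily accessible. Everything else -- the identification of $\Sf(\wt\mu_\tb)$, the passage to the uniform‑on‑compacts limit, and the closedness of $\overline\Sf(\wt\mu_\tb)$ -- is soft. Combined with Proposition \ref{pro:const}, $\ch_{2\cdot\N_0}\in\overline\Sf(\wt\mu_\tb)$ gives that the induced chain on $2\cdot\N_0$ is locally contractive, hence strongly contractive and absorbed by its attractor whenever it is not transient.
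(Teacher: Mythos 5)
Your soft framework is fine: in the lattice case $\Sf(\wt\mu_{\tb})$ is indeed the semigroup of compositions $f_{y_n}\circ\dots\circ f_{y_1}$ with $y_i\in\supp(\mu)$ and even total sum (any such word splits into admissible blocks at its even partial sums), and since compact subsets of $\N_0$ are finite it suffices to exhibit, for each $L$, a map $g_L\in\Sf(\wt\mu_{\tb})$ agreeing with $\ch_{2\cdot\N_0}$ on $\{0,\dots,L\}$ (you do not even need to compose the $g_L$: they already converge pointwise, hence uniformly on compacts). The model computation with the unit reflection is also correct. But the general construction of $g_L$ --- which you yourself flag as ``the decisive, and only nonroutine, point'' --- is not carried out: ``a large reflection first corrals $[0,L]$ into a bounded range, a B\'ezout combination of reflections then acts there like $x\mapsto|x-1|$, and one appends an extra $f_a\circ f_a$ whenever needed so the total increment stays even'' is an assertion, not a proof, and it is exactly where the entire difficulty of the proposition sits. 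Concretely: (i) only reflection parameters in $\supp(\mu)$, and whatever can be reached from them by composition, are available, and a B\'ezout/Euclid scheme does not obviously yield a map agreeing with the unit reflection on a long segment $\{0,\dots,L\}$ --- such schemes naturally give agreement only on a very short initial segment; (ii) appending $f_a\circ f_a$ to repair the parity of the total increment is not harmless, since $f_a\circ f_a$ is not the identity near $0$ and will disturb the values you have just arranged on the segment of interest.

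The paper closes precisely this gap with much weaker intermediate goals. Step 1 replaces each negative increment $a$ by $f_{a'}=f_b^{\lfloor -a/b\rfloor+1}\circ f_a$ to obtain positive parameters $y_0<\dots<y_m$ with $\gcd\{y_0,\dots,y_m\}=1$ and $f_{y_k}\in\Sf(\wt\mu)$. Step 2 runs the Euclidean algorithm at the level of maps to produce $g_m\in\Sf(\wt\mu)$ agreeing with $f_1$ \emph{only on} $\{0,1\}$. Step 3 is the twist your sketch is missing: since $g_m$ flips parity and is a contraction, necessarily $|g_m(n+1)-g_m(n)|=1$ for all $n$, whence inductively $g_m(2n-1)\in\{0,2,\dots,2n-2\}$ and $g_m(2n)\in\{1,3,\dots,2n-1\}$; therefore $h=g_m^2$ preserves parity (so $h\in\Sf(\wt\mu_{\tb})$, the even-sum constraint being settled by squaring rather than by ad hoc insertions), and $h^k$ sends $\{0,1,\dots,2k-1\}$ onto $\{0,1\}$ according to parity, giving the desired constant limit on each parity class. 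In other words, agreement with $f_1$ on two points plus the Lipschitz property replaces your unproved ``act like the unit reflection on all of $\{0,\dots,L\}$''. Unless you supply a complete construction of your $g_L$, the argument does not prove the proposition.
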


\begin{proof} 
Recall the notation $f_y(x) = |x-y|$.
\\[3pt]
\emph{Step 1.} There are elements $y_0\,,\dots, y_m \in \N$ such
that 
$$
0 < y_0 < \dots < y_m\,,\quad \gcd\{y_0\,,\dots, y_m\} = 1\,,\AND f_{y_k} \in \Sf(\wt \mu).
$$
Indeed, there is $b \in \supp(\mu)$ with $b \ge 1$, and if $a < 0$ then 
$a' = a + (\lfloor -a/b \rfloor +1)b \ge 1$, and we check easily that
$$
f_{a'} = f_b^{\lfloor -a/b \rfloor +1} \circ f_a\,,
$$
whence $f_{a'} \in  \Sf(\wt \mu)$ whenever $a \in \supp(\mu)$. Now,
there are $a_1\,,\dots\,,a_n \in \supp(\mu) \setminus \{0\}$ with greatest
common divisor $1$. We replace each $a_k < 0$ by $a_k'$ and add $b$ to
the updated collection of elements. Then we order them and elminate possibly 
redundant ones to get $y_0\,,\dots, y_m\,$.
\\[3pt]
\emph{Step 2.} We now set $d_k = \gcd\{y_0\,,\dots,y_k\}$, so that 
$y_0 = d_0 > d_1 > \dots > d_m = 1$. We construct recursively 
elements $g_0\,,\dots, g_m \in \Sf(\wt \mu)$ such that
$$
g_k(n) = f_{d_k}(n) \quad\text{for all } \; n \in \{0,1,\dots, d_k\}\,.
$$
We start with $g_0 = f_{y_0}$. If we already have $g_{k-1}$ then we follow the
steps of the Euclidean algorithm posing $r_0= y_k\,$, $r_1=d_{k-1} < y_k$ 
and applying repeated integer division $r_{i-1} = q_ir_i + r_{i+1}$ 
with $0 \le r_{i+1} < r_i$. If $j$ is the first index for which $r_{j+1}=0$
then $r_j = \gcd\{y_k\,,d_{k-1}\}= d_k$. We let
$$
h_0 = f_{y_k}\,,\; h_1 = g_{k-1}\,,\AND h_i = h_{i-1}^{q_{i-1}} \circ h_{i-2}\,,\; i =2,\dots, j\,.
$$
Then we set $g_k = h_j\,$. (The $h_i$ as well as $j$ depend on $k$.) One checks easily that
also $g_k$ has the proposed properties.
\\[3pt]
\emph{Step 3.} We now have $g_m(n) = f_1(n)$ for $n \in \{0,1\}\,$.
Thus $g_m$ sends even numbers to odd ones and vice versa, and since it is
a contraction, this implies that $|g_m(n+1) - g_m(n)| = 1$ for all $n$.
From this we deduce inductively that for all $n \in \N$,
$$
g_m(2n-1) \in \{ 0, 2, \dots, 2n-2\} \AND g_m(2n) \in \{ 1, 3, \dots, 2n-1\}.
$$
Therefore $h = g_m^2 \in \Sf(\wt \mu)$  preserves the parity of any $n \in \Z$. 
But this just means that $h \in  \Sf(\wt \mu_{\tb})$. The above yields that
$$
h^k(2n-1) = 1 \AND h^k(2n-2) = 0 \quad\text{for}\quad n=1, \dots, k.
$$
As $k \to \infty$, we see that $h^k \to \ch_{2\cdot\N_0+1}$ pointwise, so that
$\ch_{2\cdot\N_0+1} \in \overline \Sf(\wt \mu_{\tb})$. 
\end{proof}

\begin{cor}\label{cor:induced}
 The induced process $(X_{\tb(n)}^x)$ is locally contractive on each of
the classes $2\cdot \N_0$ and $2\cdot \N_0 +1$.
The respective limit sets are $\Ll_0=\Ll \cap (2\cdot \N_0)$, resp. $\Ll_1=\Ll \cap (2\cdot \N_0+1)$,
where $\Ll$ is as in \eqref{eq:Ll}.
\\[3pt]
If the originial reflected random walk $(X_n^x)$ is positive, resp. null recurrent, then
so is the induced process on each of the two classes, and $X_n^x-X_n^y \to 0$ a.s.
whenever $x-y$ is even. 
\end{cor}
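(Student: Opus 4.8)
The plan is to derive Corollary~\ref{cor:induced} directly from Proposition~\ref{pro:even} and Proposition~\ref{pro:const}, treating the two parity classes symmetrically. First I would observe that the induced process $(X_{\tb(n)}^x)$ is, by \eqref{eq:induced}, an SDS of contractions on $\Xx$ driven by the i.i.d.\ mappings $F_n$ with common law $\wt\mu_{\tb}$, and that its state space is the single parity class $2\cdot\N_0$ (if $x$ is even) or $2\cdot\N_0+1$ (if $x$ is odd), since each $F_n$ is a composition of maps $f_y$ and $\pi(F_n(z))=\pi(z)$ by the very definition \eqref{eq:stop} of $\tb(n)$. Proposition~\ref{pro:even} tells us that $\ch_{2\cdot\N_0}\in\overline\Sf(\wt\mu_{\tb})$, i.e.\ the closed semigroup generated by $\supp(\wt\mu_{\tb})$ contains a constant function (the constant $0$); applying $f_1$ afterwards, or rerunning Step~3 of the proof of Proposition~\ref{pro:even} with the roles of the classes exchanged, also gives the constant $1$ in $\overline\Sf(\wt\mu_{\tb})$. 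Hence on each class the hypothesis of Proposition~\ref{pro:const} is met, provided the induced process is non-transient on that class.

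For the non-transience input I would argue that transience of $(X_{\tb(n)}^x)$ would force transience of $(X_n^x)$: the original walk visits the parity class of $x$ at the times $\tb(n)$, and since $\Ex[\tb(1)]=2<\infty$ the subsequence $(\tb(n))$ is a.s.\ unbounded with $\tb(n)/n\to 2$, so if $X_{\tb(n)}^x\to\infty$ then $X_n^x$ cannot have a bounded subsequence either — more carefully, the original RRW restricted to the times in the parity class coincides with the induced chain, and recurrence of the original chain to a bounded set yields recurrence of the induced chain to that set. Thus when $(X_n^x)$ is recurrent, the induced process on each class is non-transient, and Proposition~\ref{pro:const} applies: it is locally contractive, recurrent and strongly contractive on that class. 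The identification of the attractors $\Ll_0,\Ll_1$ then follows from the last characterisation in Proposition~\ref{pro:rec}: $\Ll_i$ is the smallest nonempty closed subset of the class that is invariant under every $f\in\supp(\wt\mu_{\tb})$. Since each generator $f_{y_k}$ of $\Sf(\wt\mu)$ from Step~1 maps $\Ll$ into $\Ll$ (as $\Ll=[0,N]\cap\Xx$ and $f_y([0,N])\subset[0,N]$ whenever $y\le N$; and $N=\infty$ makes this trivial), the set $\Ll\cap(2\cdot\N_0)$ is closed and invariant, and minimality follows from topological irreducibility of $(X_n^x)$ on $\Ll$ combined with the fact that $\Ll_i$ must contain the accumulation points of a trajectory, which are dense in $\Ll\cap(2\cdot\N_0+\pi(x))$ because the original walk is irreducible on $\Ll$ and visits each parity class infinitely often.

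For the last sentence: if $(X_n^x)$ is positive recurrent then by Proposition~\ref{pro:invmeas}(b) it has a finite invariant measure, and the induced process inherits a finite invariant measure on each class (obtained by restricting and renormalising, using that the return time to a neighbourhood under the induced chain is at most a constant times the return time under the original chain in view of $\Ex[\tb(1)]=2$); hence the induced chain is positive recurrent, and conversely. The same comparison of return times shows that null recurrence of one forces null recurrence of the other. Finally, $X_n^x-X_n^y\to 0$ a.s.\ when $x-y$ is even follows because then $x,y$ lie in the same class, the induced process is strongly contractive there, so $|X_{\tb(n)}^x-X_{\tb(n)}^y|\to 0$ a.s.; and for $k$ between consecutive stopping times $\tb(n)\le k<\tb(n+1)$ the contraction property gives $|X_k^x-X_k^y|\le |X_{\tb(n)}^x-X_{\tb(n)}^y|$, so the full sequence converges to $0$ as well.

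The main obstacle I anticipate is the careful transfer of recurrence type (transience vs.\ null vs.\ positive recurrence) between the original and induced chains. The heuristic — that sampling along $\tb(n)$ with $\Ex[\tb(1)]=2$ changes nothing essential — is clear, but making the comparison of return-time expectations rigorous (in particular, bounding the induced return time to an open set in terms of the original one, and vice versa, via a Wald-type argument on the i.i.d.\ inter-stopping-time gaps) is where the real work lies; everything else is an assembly of Propositions~\ref{pro:rec}, \ref{pro:invmeas}, \ref{pro:const} and \ref{pro:even}.
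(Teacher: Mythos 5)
Your argument follows essentially the same route as the paper's: Proposition \ref{pro:even} provides a constant map of the induced semigroup on each parity class, Proposition \ref{pro:const} then gives local (indeed strong) contractivity, the attractors are identified via the invariant-set characterisation of Proposition \ref{pro:rec} together with irreducibility on $\Ll$, the recurrence type is transferred by comparing return times (the induced return time is bounded by the original one, with a Wald-type argument for the converse), and the final convergence $X_n^x-X_n^y\to 0$ follows from monotonicity of $|X_k^x-X_k^y|$ under the $1$-Lipschitz maps between stopping times --- exactly the reasoning the paper intends. The only slip is the aside about composing with $f_1$ to obtain the second constant (in general $f_1$ neither preserves parity nor belongs to the induced semigroup), but your alternative --- the limit $\ch_{2\cdot\N_0+1}$ from Step 3, restricted to either class, already is the required constant --- is the correct fix, so nothing essential is missing.
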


The statement on recurrence is clear from the fact that the return time to the starting
point of the induced process is bounded by the return time of the original process.
We remark that \cite{Be} has general results in the same spirit, where the 
SDS has a finite, irreducible factor chain.

\medskip

\noindent\textbf{B. Non-negative $Y_n\,$}
$\,$

\smallskip

\noindent
We first consider the situation when $Y_n \ge 0$ (of course excluding 
the trivial case $Y_n \equiv 0$), so that the increments of $(X_n^x)$ are
non-positive except possibly at the moments of reflection. In this case,
{\sc Feller~\cite{Fe}} and {\sc Knight~\cite{Kn}} have computed an invariant measure 
for the process when the $Y_n$ are non-lattice random variables, while 
{\sc Boudiba~\cite{Bo1}}, \cite{Bo2} has provided such a measure
when the $Y_n$ are lattice variables.

\begin{lem}\label{lem:invmeas} Suppose that $\supp{\mu} \subset [0\,,\infty)$.
\\[3pt]\noindent
\emph{(a)} If $\mu$ is non-lattice then an invariant measure is given by
$$
\nu(dx) = \mu\bigl((x\,,\,\infty)\bigr)\,dx\,.
$$
\emph{(b)} If $\mu$ is lattice,  then an invariant measure is
$$
\nu(0) = \frac{1-\mu(0)}{2} \AND
\nu(x) = \frac{\mu(x)}{2} + \mu\bigl((x\,,\,\infty)\bigr)\,,\quad\text{if}
\;\;x\in \N\,.
$$ 
\end{lem}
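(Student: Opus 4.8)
The plan is to verify directly that the proposed $\nu$ satisfies the defining relation of an invariant measure for the transition kernel $P(x,B)=\Prob[\,|x-Y|\in B\,]$ of the reflected walk, $Y\sim\mu$. Since $\supp\mu\subset[0,\infty)$, from a state $x$ the walk moves to $x-y$ (if $y\le x$) or to $y-x$ (if $y>x$), and this splitting is the engine of the whole computation. In the non-lattice case (a), $\nu$ is Radon ($\int_0^a\bar F(x)\,dx<\infty$ with $\bar F(t)=\mu((t,\infty))$), so it suffices to test invariance against $g\in C_c(\R_+)$, $g\ge 0$, i.e. to show $\int\Ex[g(|x-Y|)]\,\bar F(x)\,dx=\int g(x)\,\bar F(x)\,dx$; in the lattice case (b) I would just check $\nu(z)=\sum_{x}\nu(x)P(x,z)$ for every $z\in\N_0$ by hand.

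For (a): by Fubini the left-hand side is $\int\!\!\int_0^\infty g(|x-y|)\,\bar F(x)\,dx\,d\mu(y)$; splitting the inner integral at $x=y$ and substituting $u=y-x$, resp.\ $u=x-y$, turns it into $\int_0^\infty g(u)\big[\ch_{\{u<y\}}\bar F(y-u)+\bar F(y+u)\big]\,du$. After one more Fubini, the identity reduces to the pointwise claim
$$
\int_{[u,\infty)}\bar F(y-u)\,d\mu(y)+\int_{[0,\infty)}\bar F(y+u)\,d\mu(y)=\bar F(u)\qquad\text{for a.e. }u\ge 0.
$$
Reading $\bar F(t)=\Prob[Y>t]$ with $Y,Y'$ i.i.d.\ $\mu$-distributed, the two left-hand terms are $\Prob[Y>u,\,Y'>Y-u]$ and $\Prob[Y'-Y>u]$. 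Decomposing $\Prob[Y>u]=\Prob[Y>u,\,Y'>Y-u]+\Prob[Y>u,\,Y'\le Y-u]$, it remains to identify the last probability with $\Prob[Y'-Y>u]$: on $\{Y'\le Y-u\}$ one has $Y\ge u$ automatically (as $Y'\ge 0$), and then the exchangeability $Y-Y'\stackrel d= Y'-Y$ gives $\Prob[Y-Y'\ge u]=\Prob[Y'-Y>u]$ once the (at most countably many) $u$ that are atoms of $Y$ or of $Y-Y'$ are discarded.

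For (b): the relation at $z=0$ is $\nu(0)=\sum_{x\ge0}\nu(x)\mu(x)$, while for $z\ge1$, after reindexing, it is $\nu(z)=\sum_{j\ge0}\nu(z+j)\mu(j)+\sum_{x\ge0}\nu(x)\mu(x+z)$. Writing the proposed values uniformly as $\nu(x)=\tfrac12\mu(x)+\bar F(x)-\tfrac12\ch_{\{x=0\}}$ and again using $\bar F(t)=\Prob[Y>t]$, both reduce to identities of the same flavour as in (a): the $z\ge1$ case collapses, via $\Prob[Y\ge z]=\Prob[Y\ge z,\,Y'>Y-z]+\Prob[Y\ge z,\,Y'\le Y-z]$, to the symmetry of $Y-Y'$ together with $\Prob[Y-Y'=z]=\Prob[Y'-Y=z]$; the $z=0$ case falls out after collecting the terms involving $\Prob[Y=Y']=\sum_x\mu(x)^2$ and $\mu(\{0\})$, and it is precisely this bookkeeping that forces the extra $-\tfrac12$ in $\nu(0)$ relative to the bulk formula $\tfrac12\mu(x)+\bar F(x)$.

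The proof is therefore conceptually short — everything rests on $Y-Y'\stackrel d= Y'-Y$ — and the main obstacle is purely the bookkeeping: justifying the Fubini interchanges (harmless, since all integrands are nonnegative once $g\ge0$), handling atoms and the strict-versus-nonstrict inequalities at the endpoints in (a), and making the constants in the $z=0$ equation of (b) cancel correctly.
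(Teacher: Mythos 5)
Your verification is correct, but note that the paper itself contains no proof of this lemma: it is quoted from Feller, Knight and Boudiba, so your direct computation is a self-contained substitute rather than a variant of an in-paper argument. The computation in (a) is sound: the Tonelli interchanges are legitimate (everything is nonnegative), and the pointwise identity you reduce to does hold for Lebesgue-a.e.\ $u$, which is all that matters since $\nu$ is only determined by the Lebesgue class of its density; in fact you do not even need to restrict to $g\in C_c$ — the same computation with $g=\ch_B$ gives $\int P(x,B)\,d\nu(x)=\nu(B)$ for every Borel $B$ directly. In (b) your uniform rewriting $\nu(x)=\tfrac12\mu(x)+\mu\bigl((x,\infty)\bigr)-\tfrac12\ch_{\{x=0\}}$ is right and the bookkeeping does close: for $z\ge1$ the two sums combine to
$$
\Prob[\,Y-Y'\ge z\,]+\Prob[\,Y\ge z,\;Y-Y'<z\,]-\tfrac12\mu(z),
$$
and since $Y'\ge0$ forces $\{Y-Y'\ge z\}\subset\{Y\ge z\}$, this equals $\Prob[Y\ge z]-\tfrac12\mu(z)=\tfrac12\mu(z)+\mu\bigl((z,\infty)\bigr)=\nu(z)$; for $z=0$ one gets $\tfrac12\Prob[Y=Y']+\Prob[Y>Y']-\tfrac12\mu(0)$, which equals $\tfrac12\bigl(1-\mu(0)\bigr)$ by the symmetry $\Prob[Y>Y']=\tfrac12\bigl(1-\Prob[Y=Y']\bigr)$ — exactly the mechanism you describe. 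So the only thing your write-up leaves implicit is this final arithmetic, and it works out; as a sanity check, both formulas give total mass $\nu(\R_+)=\Ex(Y_1)$, consistent with the remark following the lemma.
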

In both cases, $\nu\bigl([0\,,\infty)\bigr) = \Ex(Y_1)$. This leads to the following well-known
property.

\begin{cor}\label{cor:cond0} Reflected random walk is positive recurrent on $\Ll$ if and only if 
\hbox{$\Ex(Y_1) < \infty$.}
\end{cor}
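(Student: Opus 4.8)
The plan is to read off positive recurrence directly from the invariant measure $\nu$ produced by Lemma~\ref{lem:invmeas}, whose total mass is $\nu(\Ll)=\Ex(Y_1)$, combining it in each case with the appropriate uniqueness statement for invariant measures. The non-lattice and lattice cases require slightly different tools, since the former is a locally contractive SDS of contractions while the latter is not.

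\emph{Non-lattice case.} Since $\supp(\mu)\subset[0\,,\infty)$ and $\mu\neq\delta_0$, condition \eqref{eq:nontriv} holds, so by Proposition~\ref{pro:contractive} the reflected random walk is locally contractive. One first checks that the Radon measure $\nu(dx)=\mu\bigl((x\,,\infty)\bigr)\,dx$ has support exactly $\Ll$ (its density is positive precisely on $[0\,,N)$). If $\Ex(Y_1)<\infty$, then $\nu$ is finite, its normalisation is an invariant probability measure, hence the walk is recurrent by Proposition~\ref{pro:invmeas}(a); then Proposition~\ref{pro:invmeas}(b) tells us that every invariant measure is a constant multiple of $\nu$ and that positive recurrence is equivalent to $\nu(\Ll)<\infty$, which holds. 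Conversely, positive recurrence implies recurrence, so Proposition~\ref{pro:invmeas}(b) applies and forces the essentially unique invariant measure to have finite mass on $\Ll$; since $\nu$ is an invariant Radon measure, it is a constant multiple of that one, whence $\Ex(Y_1)=\nu(\Ll)<\infty$.

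\emph{Lattice case.} Here local contractivity fails, so instead I would invoke elementary countable-state Markov chain theory: by part~A, $(X_n^x)$ restricted to the absorbing set $\Ll\cap\N_0$ is an irreducible Markov chain, and such a chain is positive recurrent if and only if it admits a finite invariant measure (which is then unique up to a constant factor). Lemma~\ref{lem:invmeas}(b) exhibits an invariant measure with total mass $\Ex(Y_1)$, so the chain is positive recurrent precisely when $\Ex(Y_1)<\infty$, the converse again using that positive recurrence forces the unique invariant measure — in particular that of Lemma~\ref{lem:invmeas}(b) — to be finite. There is no genuine obstacle here: all the substance lies in Lemma~\ref{lem:invmeas} and in the general theory of \S2, and the only points needing a little care are the identity $\nu\bigl([0\,,\infty)\bigr)=\Ex(Y_1)$ (a Fubini computation, resp. its summation analogue) and the verification that $\supp(\nu)=\Ll$, so that the dichotomy of Proposition~\ref{pro:invmeas}(b) can be applied unambiguously.
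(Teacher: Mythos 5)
Your argument is correct and is exactly the reasoning the paper leaves implicit: it records only that the invariant measure of Lemma \ref{lem:invmeas} has total mass $\Ex(Y_1)$ and then cites the statement as well known, the point being precisely the combination of that identity with uniqueness of the invariant measure and the dichotomy ``$\nu(\Ll)<\infty$ iff positive recurrence'' (Proposition \ref{pro:invmeas} in the non-lattice case, classical irreducible countable-chain theory in the lattice case, where local contractivity indeed fails). Your split into the two cases and the small verifications you flag ($\supp(\nu)=\Ll$ and $\nu(\Ll)=\Ex(Y_1)$) are exactly the right points of care, so nothing further is needed.
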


The next question is when we have null-recurrence. The following sufficient conditions
are due to \cite{Sm}, \cite{PeWo1} and \cite{Rab} (in this order). 

\begin{pro}\label{pro:refl-recurr} Suppose that $\supp(\mu) \subset \R_+$.
Then each of the following conditions implies the next one and is sufficient
for recurrence of the reflected random walk on $\Ll$.
\begin{gather}
\Ex\bigl(\sqrt{Y_1}\,\bigr) < \infty \tag{i}\label{cond1}\\
\int_{\R^+} \mu\bigl((x\,,\,\infty)\bigr)^2\,dx < \infty
\tag{ii}\label{cond2}\\
\lim_{y \to \infty} \mu\bigl((y\,,\,\infty)\bigr)
\int_0^y \mu\bigl((x\,,\,y]\bigr)\,dx = 0 \tag{iii}\label{cond3}
\end{gather}
(In the lattice case, the integrals reduce to sums and $dx$ is the counting measure on $\N_0\,$.)
\end{pro}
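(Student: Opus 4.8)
The plan is to prove the three implications (i) $\Rightarrow$ (ii) $\Rightarrow$ (iii) first, which are purely analytic, and then to establish that (iii) implies recurrence using the tools already assembled, in particular local contractivity (Proposition \ref{pro:contractive} and Corollary \ref{cor:induced}) together with the explicit invariant measure from Lemma \ref{lem:invmeas}.

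\medskip

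\noindent\textbf{The analytic implications.} For (i) $\Rightarrow$ (ii), write $\overline{\mu}(x) = \mu\bigl((x,\infty)\bigr)$. By Fubini, $\Ex(\sqrt{Y_1}) = \int_0^\infty \Prob[\sqrt{Y_1} > t]\,dt = \int_0^\infty \overline{\mu}(t^2)\,dt$, and substituting $x = t^2$ gives $\Ex(\sqrt{Y_1}) = \tfrac12\int_0^\infty \overline{\mu}(x)\,x^{-1/2}\,dx$. Since $\overline{\mu}$ is nonincreasing, $\overline{\mu}(x) \le \tfrac{1}{x}\int_0^x \overline{\mu}(u)\,du$; comparing this with the finiteness of $\int \overline{\mu}(x)x^{-1/2}\,dx$ forces $\overline{\mu}(x)^2 = o(1/x)$ along a sequence and, more carefully, yields $\int_0^\infty \overline{\mu}(x)^2\,dx < \infty$ by a standard Cauchy--Schwarz / monotonicity argument (split $\int \overline{\mu}^2 = \int \overline{\mu}\cdot\overline{\mu}$ and bound one factor using monotonicity against the weight $x^{-1/2}$). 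For (ii) $\Rightarrow$ (iii), note $\int_0^y \mu\bigl((x,y]\bigr)\,dx \le \int_0^y \overline{\mu}(x)\,dx$, so it suffices to show $\overline{\mu}(y)\int_0^y\overline{\mu}(x)\,dx \to 0$; writing $G(y) = \int_0^y \overline{\mu}(x)^2\,dx$, monotonicity of $\overline{\mu}$ gives $\overline{\mu}(y)\int_{y/2}^y\overline{\mu}(x)\,dx \le \int_{y/2}^y \overline{\mu}(x)^2\,dx = G(y) - G(y/2) \to 0$, and the contribution of $\int_0^{y/2}\overline{\mu}(x)\,dx$ is handled by the even cruder bound $\overline{\mu}(y)\le \overline{\mu}(y/2)$ together with $\overline{\mu}(y/2)\int_0^{y/2}\overline{\mu} \le \bigl(\int_0^{y/2}\overline{\mu}^2\bigr)^{1/2}\cdot(\ldots)$ — one sees $\overline{\mu}(y)\to 0$ and $\int_0^{y/2}\overline{\mu}(x)\,dx = o(1/\overline{\mu}(y))$ from square-integrability. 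The lattice versions are identical with integrals replaced by sums.

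\medskip

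\noindent\textbf{From (iii) to recurrence.} In the non-lattice case the process is locally contractive by Proposition \ref{pro:contractive}, so by Proposition \ref{pro:rec} it is either transient or recurrent, and by Proposition \ref{pro:invmeas}(a) recurrence is equivalent to non-transience, i.e.\ it suffices to rule out $d(X_n^x, x)\to\infty$ a.s. The invariant measure $\nu(dx) = \overline{\mu}(x)\,dx$ from Lemma \ref{lem:invmeas} is $\sigma$-finite on $\Ll$, and the standard strategy (as in \cite{PeWo1}) is to use a truncation/Lyapunov argument: one shows that under (iii) the process cannot escape to infinity by controlling the ``overshoot'' at reflection times. Concretely, reflection happens when $X_{n-1}^x < Y_n$, producing a new value $Y_n - X_{n-1}^x$; the expected displacement contributed by such events, weighted against $\nu$, is governed precisely by the quantity $\overline{\mu}(y)\int_0^y \mu\bigl((x,y]\bigr)\,dx$, and (iii) says this vanishes. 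One packages this into a test function argument showing $\liminf_n X_n^x < \infty$ a.s., which by irreducibility on $\Ll$ and local contractivity upgrades to recurrence. In the lattice case one passes to the induced process on $2\cdot\N_0$ (Corollary \ref{cor:induced}), which is locally contractive, and runs the same argument with the lattice invariant measure of Lemma \ref{lem:invmeas}(b); recurrence of the induced process is equivalent to recurrence of the original by Corollary \ref{cor:induced}.

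\medskip

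\noindent\textbf{Main obstacle.} The implications among (i)--(iii) are routine real analysis. The genuine work is the passage (iii) $\Rightarrow$ recurrence: one must find the right test function $V$ on $\Ll$ (something like $V(x) \asymp \int_0^x \overline{\mu}(t)\,dt$ or a concave modification thereof) and verify a drift inequality $\Ex[V(X_1^x) - V(x) \mid X_0^x = x] \le o(1)$ as $x\to\infty$, splitting the computation according to whether a reflection occurs. Controlling the reflection term is exactly where condition (iii) enters, and getting the error terms to close — especially reconciling the $\mu\bigl((x,y]\bigr)$ appearing in (iii) with the $\overline{\mu}$ appearing in $\nu$ — is the delicate point; this is where I would lean most heavily on the arguments of \cite{Sm}, \cite{PeWo1} and \cite{Rab}.
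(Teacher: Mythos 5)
Note first that the paper does not actually prove Proposition \ref{pro:refl-recurr}: it is quoted as known, with the conditions attributed to \cite{Sm}, \cite{PeWo1} and \cite{Rab}, so there is no internal argument to compare against. Judged on its own terms, your proposal establishes only the easy half. The implications (i)$\Rightarrow$(ii)$\Rightarrow$(iii) are correct in substance, and can be tightened: writing $\overline\mu(x)=\mu\bigl((x,\infty)\bigr)$, condition (i) plus monotonicity gives $\overline\mu(x)\sqrt{x}\to 0$, hence $\overline\mu(x)^2\le C\,\overline\mu(x)x^{-1/2}$ is integrable; and under (ii), monotonicity gives $y\,\overline\mu(y)^2\le 2\int_{y/2}^{y}\overline\mu^2\to 0$, so by Cauchy--Schwarz $\overline\mu(y)\int_0^y\overline\mu\le \overline\mu(y)\sqrt{y}\,\bigl(\int_0^\infty\overline\mu^2\bigr)^{1/2}\to 0$, which is even stronger than (iii).

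The genuine gap is the passage (iii)$\Rightarrow$ recurrence, which is the entire content of the proposition and which your text only sketches. You propose a Lyapunov function $V(x)\asymp\int_0^x\overline\mu$ and a drift inequality $\Ex\bigl[V(X_1^x)-V(x)\mid X_0^x=x\bigr]\le o(1)$; but a merely $o(1)$ (possibly positive) drift of an unbounded test function is not a recurrence criterion --- Lamperti/Foster-type results need non-positive drift outside a compact set, or quantitative control of the positive part relative to $V$ --- and no computation is given showing how (iii) produces such a bound, in particular how the reflection overshoot is controlled. You acknowledge this by deferring to \cite{Sm}, \cite{PeWo1}, \cite{Rab}, i.e.\ exactly to the sources the paper cites, so the hard content remains unproved. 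Two smaller inaccuracies: the dichotomy ``non-transient implies recurrent'' that you need comes from Proposition \ref{pro:rec}, not from Proposition \ref{pro:invmeas}(a) (the invariant measure $\overline\mu(x)\,dx$ is in general infinite here, so \ref{pro:invmeas}(a) does not apply); and Corollary \ref{cor:induced} as stated gives only ``original recurrent $\Rightarrow$ induced recurrent'', not the equivalence you invoke --- the converse is true but needs its own (easy) argument, and in the lattice case the proposition is anyway a matter of ordinary irreducible Markov chain recurrence on $\Ll$, with no induced process required.
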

\medskip

\medskip

\noindent\textbf{C. Two-sided increments}
$\,$

\smallskip

We now drop the assumption that $Y_n \ge 0$. Of course, we require that $\mu$
is such that we do not have $S_n = Y_1 + \dots + Y_n \to -\infty$ with positive 
probability (= probability
1 by Kolmogorov's 0-1 law), because
in this case there are only finitely many reflections, and 
$X_n^x \to \infty$ almost surely.

Let $Y_n^+ = \max \{Y_n, 0 \}$ and $Y_n^- = \max \{-Y_n, 0\}$.
If \emph{(a)} $\;\Ex(Y_1^-) < \Ex(Y_1^+) \le \infty\,$, or if
\emph{(b)} $\;0 < \Ex(Y_1^-) = \Ex(Y_1^+) < \infty\,$, then
$\limsup S_n = \infty\,$ almost surely, so that there are infinitely many
reflections. 

We now assume that $\limsup S_n = \infty$ almost surely.
Then the (non-strictly) ascending  \emph{ladder epochs}
$$
\lb(0)  = 0\,,\quad  \lb(k+1) = \inf \{ n > \lb(k) : S_n \ge S_{\lb(k)} \}
$$ 
are all almost surely finite, and the random variables $\lb(k+1) - \lb(k)$ 
are i.i.d.
We can consider the \emph{embedded random walk} $S_{\lb(k)}\,$, $k \ge 0$, 
which tends to $\infty$ almost surely. Its increments 
$\overline Y_k = S_{\lb(k)} - S_{\lb(k-1)}\,$, $k \ge 1$, are i.i.d. non-negative
random variables with distribution denoted $\overline{\mu}$. 
Furthermore, if $(\overline{\!X}_k^x)$ denotes the reflected random
walk associated with the sequence $(\overline Y_k)$, while $X_n^x$ is our original
reflected random walk associated with $(Y_n)$, then 
$$
\overline{\!X}_k^x = X_{\lb(k)}^x\,,
$$ 
since no reflection can occur between times $\lb(k)$ and $\lb(k+1)$.
It is easy to see that the embedded reflected random walk $(\overline{\!X}_k^x)$ is recurrent if and 
only the original reflected random walk is recurrent. This leads to the following
sufficient recurrence criteria \cite{PeWo2}.

\begin{pro}\label{pro:sqrt}
Reflected random walk  $(X_n^x)$ is (topologically) recurrent on $\Ll$, if
\begin{quote} 
\emph{(a)} $\;\Ex(Y_1^-) < \Ex(Y_1^+) \le \infty$ and 
$\Ex\bigl(\sqrt{Y_1^+}\,\bigr) < \infty\,$, 
or if\\
\emph{(b)} $\;0 < \Ex(Y_1^-) = \Ex(Y_1^+)$ and 
$\Ex\Bigl(\sqrt{Y_1^+}^{\,3}\Bigr) < \infty\,$. 
\end{quote}
In case (a), one has positive recurrence if and only if $\Ex(Y_1^+) < \infty$, 
and in case (b), one has null recurrence.
\end{pro}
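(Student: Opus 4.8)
The plan is to reduce both cases to the results already available for non-negative increments, namely Proposition \ref{pro:refl-recurr} and Corollary \ref{cor:cond0}, by passing to the embedded reflected random walk $(\overline{\!X}_k^x)$ associated with the ascending ladder epochs $\lb(k)$. Since it has already been observed in the text that $(\overline{\!X}_k^x)$ is recurrent if and only if $(X_n^x)$ is, and likewise for positive recurrence (the return times are comparable up to the a.s.\ finite increments $\lb(k+1)-\lb(k)$, which have finite mean in case (a) and, after the ladder construction, are controlled via Wald in case (b)), it suffices to verify the relevant moment condition for $\overline\mu$, the law of the non-negative ladder increment $\overline Y_1 = S_{\lb(1)}$. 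So the real content is: translate a moment hypothesis on $Y_1^+$ into the condition $\Ex(\sqrt{\overline Y_1}\,) < \infty$ of Proposition \ref{pro:refl-recurr}(i), and then invoke that proposition together with Corollary \ref{cor:cond0}.

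In case (a), the random walk $S_n$ has positive drift $\Ex(Y_1) = \Ex(Y_1^+) - \Ex(Y_1^-) > 0$ (with the usual convention if $\Ex(Y_1^+)=\infty$), so $\Ex(\lb(1)) < \infty$ by the standard renewal-theoretic fact for ladder epochs of a walk with positive mean. By Wald's identity $\Ex(\overline Y_1) = \Ex(S_{\lb(1)}) = \Ex(Y_1)\,\Ex(\lb(1))$, which is finite exactly when $\Ex(Y_1^+)<\infty$; this already gives the positive-recurrence dichotomy via Corollary \ref{cor:cond0} applied to $\overline\mu$. For recurrence when $\Ex(Y_1^+)=\infty$, I would show $\Ex(\sqrt{\overline Y_1}\,)<\infty$ follows from $\Ex(\sqrt{Y_1^+}\,)<\infty$: bound $S_{\lb(1)} \le \max_{1\le n\le \lb(1)} S_n^+ \le \sum_{n=1}^{\lb(1)} Y_n^+$, hence $\sqrt{\overline Y_1} \le \sum_{n=1}^{\lb(1)}\sqrt{Y_n^+}$, and take expectations using Wald again (the summands $\sqrt{Y_n^+}$ are i.i.d.\ with finite mean and $\lb(1)$ is a stopping time with finite mean), giving $\Ex(\sqrt{\overline Y_1}\,) \le \Ex(\sqrt{Y_1^+}\,)\,\Ex(\lb(1)) < \infty$. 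Then Proposition \ref{pro:refl-recurr}(i) applied to the non-negative walk $(\overline Y_k)$ yields recurrence of $(\overline{\!X}_k^x)$, hence of $(X_n^x)$.

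In case (b) the walk $S_n$ is centred, so $\Ex(\lb(1))=\infty$ and the previous Wald argument for $\lb(1)$ breaks down; this is the main obstacle. The standard remedy is the classical asymptotics for the ladder-height distribution of a centred walk with finite variance: $\Prob[\overline Y_1 > t]$ decays like $c/t$ as $t\to\infty$ for a constant $c$ depending on $\sigma^2 = \Ex(Y_1^2)$ (a consequence of the Spitzer--Erd\H os--Kac / Rogozin theory, or of the fact that $S_{\lb(1)}$ has a first moment proportional to $\sigma^2$ while being in the domain of attraction of a one-sided stable law of index $1/2$). Concretely, one needs $\Ex(\overline Y_1)<\infty$, which holds iff $\Ex(Y_1^2)<\infty$; the hypothesis $\Ex\bigl(\sqrt{Y_1^+}^{\,3}\bigr) = \Ex\bigl((Y_1^+)^{3/2}\bigr) < \infty$ is exactly what is required to upgrade this to $\Ex\bigl(\sqrt{\overline Y_1}\,^3\bigr) = \Ex\bigl(\overline Y_1^{\,3/2}\bigr) < \infty$, i.e.\ $\Ex(\sqrt{\overline Y_1}\,\cdot \overline Y_1) < \infty$. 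Comparing tails, $\Prob[\overline Y_1 > t] \asymp t^{-1}\Prob[Y_1^+ \gtrsim \sqrt t]$ up to constants (the ladder height at level $t$ is typically achieved by a single increment of size $\asymp\sqrt t$ over $\asymp t$ steps), so $\Ex(\overline Y_1^{\,3/2}) = \int_0^\infty \tfrac32 t^{1/2}\Prob[\overline Y_1>t]\,dt \asymp \int_0^\infty t^{-1/2}\Prob[Y_1^+ > \sqrt t]\,dt \asymp \Ex\bigl((Y_1^+)^{3/2}\bigr)<\infty$. Feeding $\Ex(\sqrt{\overline Y_1}\,)<\infty$ (which is weaker, hence also holds) into Proposition \ref{pro:refl-recurr}(i) gives recurrence of $(\overline{\!X}_k^x)$ and thus of $(X_n^x)$; and $\Ex(\overline Y_1)=\infty$ in the centred case — since $\Ex(S_{\lb(1)})$ is finite precisely when the variance is finite but here one only has $\Ex((Y_1^+)^{3/2})<\infty$, which does not force $\Ex(Y_1^2)<\infty$ — wait, more carefully: one checks $\Ex(\overline Y_1)=\infty$ whenever $\Ex(Y_1^2)=\infty$, and when $\Ex(Y_1^2)<\infty$ one has $\Ex(\overline Y_1)<\infty$ but the process is still null recurrent because the embedded walk, although positive recurrent on $\overline\Ll$ by Corollary \ref{cor:cond0}, has its return times dilated by the $\lb(k+1)-\lb(k)$ whose mean is infinite; summing, $\Ex$ of the original return time is infinite. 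Hence null recurrence in case (b). The one genuinely delicate point, worth citing rather than reproving, is the sharp tail asymptotic for the ladder height of a centred walk; everything else is Wald's identity and bookkeeping between the moment exponents $1/2$ on $\overline Y_1$ and $3/2$ on $Y_1^+$.
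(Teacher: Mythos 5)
Your overall route is the same as the paper's: the paper reduces to the embedded walk $\overline{\!X}_k^x = X_{\lb(k)}^x$ of non-negative ladder increments and then refers the moment transfer to \cite{PeWo2}, which is exactly your plan. Your case (a) is correct: the positive drift gives $\Ex\bigl(\lb(1)\bigr)<\infty$, the subadditivity bound $\sqrt{\overline Y_1}\le\sum_{n=1}^{\lb(1)}\sqrt{Y_n^+}$ together with Wald yields $\Ex\bigl(\sqrt{\overline Y_1}\,\bigr)<\infty$, so Proposition \ref{pro:refl-recurr}(i) applies, and Wald plus Corollary \ref{cor:cond0} (with the lifted measure \eqref{eq:lift-meas}) settles the positive-recurrence dichotomy.

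In case (b), however, the step that carries all the weight is mis-justified. The tail claim $\Prob[\overline Y_1>t]\sim c/t$, and the heuristic $\Prob[\overline Y_1>t]\asymp t^{-1}\Prob[Y_1^+ \gtrsim \sqrt t\,]$, are false: since $S_{\lb(1)-1}<0$ one has $\overline Y_1\le Y_{\lb(1)}^+$, so for bounded increments the ladder \emph{height} is bounded (for the simple random walk $\overline Y_1\in\{0,1\}$); the universal $c/\sqrt{n}$, resp.\ $c/t$ behaviour of centred walks concerns the ladder \emph{epoch} $\lb(1)$, not the ladder height. Accordingly, your intermediate claim that the hypothesis forces $\Ex\bigl(\overline Y_1^{\,3/2}\bigr)<\infty$ is also false in general; the correct bookkeeping shifts the exponent by $1$, not by a factor. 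The fact actually needed -- and the one to cite, as \cite{PeWo2} do -- is the ladder-height moment criterion for centred walks (Chow; see also Doney): $\Ex\bigl(\overline Y_1^{\,\alpha}\bigr)<\infty$ if and only if $\Ex\bigl((Y_1^+)^{\alpha+1}\bigr)<\infty$; with $\alpha=1/2$ this turns $\Ex\bigl((Y_1^+)^{3/2}\bigr)<\infty$ into exactly $\Ex\bigl(\sqrt{\overline Y_1}\,\bigr)<\infty$, after which Proposition \ref{pro:refl-recurr}(i) applies to the embedded walk. (A self-contained sufficiency argument uses duality, $\Prob[\overline Y_1>t]=\int_{[0,\infty)}U_-(dz)\,\Prob[Y_1>t+z]$ with $U_-$ the descending ladder renewal measure, together with the linear bound $U_-\bigl([0,z]\bigr)\le C(1+z)$, and then integrates in $t$.) Finally, for null recurrence your ``return times dilated by $\lb(k+1)-\lb(k)$'' argument overlooks that the original walk can re-enter a neighbourhood strictly between ladder epochs (there $X_k^x=x-S_k>x$, which may still lie in the neighbourhood); it is cleaner to observe that the unique invariant measure is the lift \eqref{eq:lift-meas} of $\overline\nu$, whose total mass is $\Ex\bigl(\lb(1)\bigr)\,\overline\nu(\Ll)=\infty$ because the centred walk has $\Ex\bigl(\lb(1)\bigr)=\infty$, and invoke Proposition \ref{pro:invmeas}(b).
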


In the positive recurrent case of (a), we also explain how to get the
invariant probability measure from the one for the embedded process. Write
$\nu$ for the latter. It is computed from $\overline\mu$ according
to Lemma \ref{lem:invmeas}.  
For any Borel set $B \subset \R$,
 \begin{equation}\label{eq:lift-meas}
 \nu(B) = \int_{\Ll} \Ex \left( \sum_{k=0}^{\lb(1)-1} \ch_B(X_k^x) \right)\, d\nu(x)\,,
\end{equation}
and it is finite because $\lb(1)$ has finite expectation. (Note that for $k < \lb(1)$ we have
$X_k^x = x - S_k\,$.) 
Among the observations from \cite{Be} and \cite{PeWo2}, we 
also recall the following.

\begin{lem}\label{lem:symmetric}
If $\mu$ is symmetric on $\R$ (resp. $\Z$), then reflected random walk 
is (topologically) recurrent  if and only if the random walk $(S_n)$ is recurrent.

In particular, if $\mu$ is symmetric and has finite first moment, then
the associated reflected random walk is recurrent.
\end{lem}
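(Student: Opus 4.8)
\emph{Proof plan.} The plan is to realise the one-dimensional reflected random walk as the absolute value of an \emph{ordinary} random walk, using the symmetry of $\mu$ to re-randomise the sign of each increment at the moments of reflection. First I would record the trivial reductions: symmetry together with \eqref{eq:nontriv} forces $\supp(\mu)\not\subset\R_+$, so in \eqref{eq:Ll} we have $N=\infty$ and $\Ll=\R_+\cap\Xx$; and in the lattice case we use the normalisation \eqref{eq:gcd}.

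Next, fixing a starting point $x\in\Xx$ and running $(X_n^x)$ from the i.i.d.\ sequence $(Y_n)$, I would set $\sigma_n=\operatorname{sign}(X_{n-1}^x-Y_n)$ (with the harmless convention $\operatorname{sign}(0)=1$), $\Pi_0=1$, $\Pi_n=\sigma_1\cdots\sigma_n$, and $\wt{Y}_n=\Pi_{n-1}Y_n$, $T_n=\wt{Y}_1+\dots+\wt{Y}_n$. An easy induction on $n$, using $X_n^x=\sigma_n(X_{n-1}^x-Y_n)$ and $\Pi_{n-1}^2=1$, gives the pathwise identity $X_n^x=\Pi_n\,(x-T_n)$, hence, since $X_n^x\ge 0$, $X_n^x=|x-T_n|$. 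I would then check that $(T_n)$ is a genuine random walk with increment law $\mu$: as $\Pi_{n-1}$ is a function of $Y_1,\dots,Y_{n-1}$ and $\mu$ is symmetric, $\wt{Y}_n$ is, conditionally on $Y_1,\dots,Y_{n-1}$, again $\mu$-distributed, so $(\wt{Y}_n)_{n\ge1}$ is i.i.d.\ with law $\mu$ by the standard argument, and $(T_n)$ is a copy of $(S_n)$.

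With the identity $X_n^x=|x-T_n|$ in hand, the equivalence is immediate. If $(S_n)$ (equivalently $(T_n)$) is transient, then $|T_n|\to\infty$ a.s., so $X_n^x\to\infty$ and the reflected random walk is transient. If $(S_n)$ is recurrent, then $(T_n)$ is topologically recurrent on the closed subgroup of $\R$ generated by $\supp(\mu)$, which is $\Z$ in the lattice case (by \eqref{eq:gcd}) and $\R$ in the non-lattice case; hence $x-T_n$, and therefore $|x-T_n|$, visits every neighbourhood of every point of $\Ll=\R_+\cap\Xx$ infinitely often, i.e.\ $(X_n^x)$ is recurrent on $\Ll$. (Alternatively, once transience has been excluded, recurrence on $\Ll$ follows from local contractivity together with topological irreducibility on $\Ll$.) For the ``in particular'' clause I would simply note that a symmetric $\mu$ with finite first moment has $\Ex(Y_1)=0$, so $(S_n)$ is recurrent by the Chung--Fuchs theorem, and the equivalence just proved gives recurrence of the reflected random walk.

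The one step I expect to require genuine care is verifying that the re-randomised increments $\wt{Y}_n$ remain i.i.d.\ with law $\mu$: this is precisely where symmetry of $\mu$ enters, and one must be slightly careful about adaptedness and about the convention for $\operatorname{sign}(0)$. The two further facts used — that a transient one-dimensional random walk satisfies $|T_n|\to\infty$, and that a recurrent one is topologically recurrent on the subgroup generated by its support — are classical.
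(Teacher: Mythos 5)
Your proof is correct and follows essentially the same route the paper has in mind: the paper recalls the equivalence from Benda and Peign\'e--Woess and only spells out the ``in particular'' part via Chung--Fuchs, and your sign-unfolding identity $X_n^x=|x-T_n|$ with $(T_n)$ an i.i.d.\ $\mu$-walk is exactly the one-dimensional case of the symmetrisation construction the paper itself records in Lemma \ref{lem:symm}. No gaps; nothing further is needed.
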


The last statement follows from the classical result that when $\Ex(|Y_1|) <\infty$ and
$\Ex(Y_1)=0$ then $S_n$ is recurrent; see {\sc Chung and Fuchs~\cite{ChFu}}.

At this point we can ask whether also in the non-symmetric case, recurrence of 
the ordinary random walk $(S_n)$ always implies recurrence of the associated reflected
random walk. The answer is ``no'', as the following example shows.

\begin{exa} Let the $Y_n$ be i.i.d. with centred distribution $\mu$ supported by 
$\{ k \in \Z : k \ge -1\}$,
and $\overline\mu$ the distribution of the $\overline Y_k$. 
By Wiener-Hopf-factorisation as in \cite{Fe} (see \cite{PeWo2} in the present
context),
$$
\mu = \overline \mu + \delta_{-1} - \overline\mu * \delta_{-1}\,,
$$
because $\delta_{-1}$ is the first strictly descending ladder distribution
associated with $\mu$. Thus, we have
$$
\mu(-1) = 1-\overline \mu(0) \AND \mu(x) = \overline \mu(x) - \overline\mu(x+1) \quad \text{for } x \in \N_0\,.
$$ 
If we \emph{start} with a probability measure $\overline \mu$ on $\N_0$ which satisfies 
$\overline \mu(x) \ge \overline\mu(x+1)$ for all $x$ then we can \emph{construct} $\mu$ 
in this way, whence $\mu$ has finite first moment and is
centred. By the uniqueness of the Wiener-Hopf decomposition, $\overline\mu$ is indeed the
first ascending ladder distribution of $\mu$. Now define 
$\overline \mu(x) = c\, \log(x+2)/(x+2)^{3/2}$, $x \in \N_0\,$. Then the random walk $(S_n)$
with law $\mu$ is recurrent.
But by \cite[Ex. 5.11]{PeWo2}, resp. its discrete variant in \cite{PeWo1},
the embedded reflected random walk is transient, and so is the reflected random
walk induced by $\mu$.
\end{exa}

\section{Reflection in all coordinates}\label{sec:only-refl}

In this section, we study the multidimensional case \eqref{eq:process} with 
$r = r_1 + r_2 \ge 2$ and $s=0$. Our state space is $\Xx = \N_0^{r_1}\times \R_+^{r_2}$.
We suppose that all one-dimensional marginals of the probability measure $\mu$
satisfy \eqref{eq:nontriv}.
Suppose initially that $r_1 \ge 1$. For $x = (x_1\,,\dots,x_r) \in \Xx$, write
$$
X_n^x = (X_{n,1}^{x_1}\,,\dots, X_{n,r}^{x_r})\,,
$$
so that $(X_{n,i}^{x_i})_{n \ge 0}$ is the reflected random walk 
induced by $\mu_i\,$. When the latter is recurrent on its unique essential class, 
we know from propositions \ref{pro:contractive} and \ref{pro:const} 
that $X_{n,i}^{x_i} - X_{n,i}^{y_i} \to 0$ almost surely, when $i > r_1$ and 
$x_i, y_i \in \R_+$ are arbitrary. On the other hand, when $i \le r_1$, by Corollary 
\ref{cor:induced} the same holds as long as $x_i, y_i \in \N_0$ have the same parity.
Recall the mapping $\pi(k) = \ch_{2\cdot \Z+1}(k)$ and define
$$
\pib: \Xx \to \{0,1\}^{r_1}\,,\quad \pib(x_1, \dots, x_r) = \bigl(\pi(x_1), \dots, \pi(x_{r_1})\bigr).
$$
Then recurrence of the marginal processes implies
\begin{equation}\label{eq:multi-contr}
\|X_n^x - X_n^y\| \to 0 \quad \text{almost surely, whenever}\quad \pib(x)=\pib(y)\,.
\end{equation}
For an element $\epb$ of the hypercube $\Z_2^{r_1}=\{0,1\}^{r_1}$, let 
$$
\Xx_{\epb} = \{ x \in \Xx : \pib(x)=\epb \}\,.
$$
We note that $\pib(X_n^x) = \pib(x + S_n)$, where again $S_n=Y_1 \dots + Y_n\,$.
The process $\bigl(\pib(X_n^x)\bigr)_{n \ge 0}$ is a random walk on the hypercube
which is translation invariant with respect to addition mod $2$ and automatically symmetric.
It is driven by the probability measure $\pib\mu(\epb) = \mu\bigl((2\cdot \Z)^{r_1} + \epb\bigr)\,$.
Since by assumption \eqref{eq:gcd}, each $\supp(\mu_i)$, $i \le r_1\,$, contains odd elements,
$\pib\mu$ charges elements different from $\zero=(0, \dots, 0)$.  The random walk is not 
necessarily irreducible; the group $\Z_2^{r_1}$ decomposes into
a subgroup $\Gamma$ (consisting of $\zero$ and the elements that can be reached from $\zero$) 
and its cosets, on each of which that random walk is irreducible. This leads us to
the following.

\begin{obs}\label{obs:cosets}
Let $\Gamma^{(j)}$, $j=1, \dots, 2^d$, be the cosets of $\Gamma$ in $\Z_2^{r_1}$.
Then $1 \le d < r_1$, and our state space decomposes into the classes
$$
\Xx^{(j)} = \bigcup_{\epb \in \Gamma^{(j)}} \Xx_{\epb}\,,
$$
so that reflected random walk started in some $x \in \Xx^{(j)}$ never exits from
that class.
\end{obs}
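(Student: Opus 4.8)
The plan is to verify Observation \ref{obs:cosets} by tracking the factor chain $\bigl(\pib(X_n^x)\bigr)_{n\ge0}$ and using the fact, already recorded in the excerpt, that $\pib(X_n^x)=\pib(x+S_n)$. First I would observe that since $S_n=S_{n-1}+Y_n$ and reduction mod $2$ in each of the first $r_1$ coordinates is a group homomorphism $\R^r\to\Z_2^{r_1}$ (well-defined on $\supp(\mu_i)\subset\Z$ for $i\le r_1$ by \eqref{eq:gcd}), the image process $\pib(x+S_n)$ is exactly the random walk on the group $\Z_2^{r_1}$ with step distribution $\pib\mu$ started from $\pib(x)$. Consequently the set of parity vectors visited from a starting configuration with $\pib(x)=\epb$ is precisely $\epb+\Gamma$, where $\Gamma=\langle\supp(\pib\mu)\rangle$ is the subgroup of $\Z_2^{r_1}$ generated by the support of $\pib\mu$ (equivalently, the set of elements reachable from $\zero$); this is where the coset structure enters, since the cosets of $\Gamma$ partition $\Z_2^{r_1}$ into $2^d$ classes $\Gamma^{(1)},\dots,\Gamma^{(2^d)}$ with $d=r_1-\dim_{\Z_2}\Gamma$.

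Next I would pull this partition back to $\Xx$ via $\pib$. Setting $\Xx^{(j)}=\bigcup_{\epb\in\Gamma^{(j)}}\Xx_\epb=\pib^{-1}(\Gamma^{(j)})$, these sets are clearly disjoint and cover $\Xx$ because the $\Gamma^{(j)}$ do so in $\Z_2^{r_1}$. For the invariance claim, if $x\in\Xx^{(j)}$ then $\pib(x)\in\Gamma^{(j)}$, and for every $n$ we have $\pib(X_n^x)=\pib(x)+\sum_{k\le n}\pib\mu\text{-steps}\in\pib(x)+\Gamma=\Gamma^{(j)}$ almost surely (each increment $\pib(Y_k)$ lies in $\Gamma$ by definition of $\Gamma$ as the group generated by $\supp(\pib\mu)$), hence $X_n^x\in\Xx^{(j)}$; so the process started in $\Xx^{(j)}$ never leaves it.

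The two remaining bookkeeping points are the bounds $1\le d<r_1$. The lower bound $d\ge1$ amounts to $\Gamma\ne\Z_2^{r_1}$, i.e. $\pib\mu$ is not adapted; but a plausible subtlety is that this need not hold in general — if the $\mu_i$ are ``generic'' the walk on the hypercube can be irreducible and then $d=0$, a single class. I would therefore expect this to be the main obstacle, and resolve it by reading the inequality as the relevant case distinction: when $\Gamma=\Z_2^{r_1}$ there is a single class $\Xx^{(1)}=\Xx$ (and $d=0$), which is the trivial and favourable situation for later arguments, while the substantive content of the observation concerns $d\ge1$; alternatively one restricts attention, as the surrounding text does implicitly, to the situation where the walk on the hypercube is not adapted. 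The upper bound $d<r_1$ is immediate: $\Gamma$ is nontrivial, since by \eqref{eq:gcd} each $\supp(\mu_i)$ with $i\le r_1$ contains an odd element, so some generator of $\Gamma$ has a nonzero $i$-th coordinate; hence $|\Gamma|\ge2$ and the number of cosets $2^d$ satisfies $2^d<2^{r_1}$, i.e. $d<r_1$. Assembling the coset partition, its pullback under $\pib$, the a.s.\ invariance from $\pib(X_n^x)-\pib(x)\in\Gamma$, and these two index bounds completes the proof.
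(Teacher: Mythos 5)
Your argument is correct and is essentially the paper's own justification: the observation carries no separate proof, being an immediate consequence of the preceding discussion that $\pib(X_n^x)=\pib(x+S_n)$ is a translation-invariant random walk on $\Z_2^{r_1}$ with step law $\pib\mu$, whose increments lie almost surely in $\Gamma$ (the subgroup of elements reachable from $\zero$, equal to the group generated by $\supp(\pib\mu)$ since every element is an involution), so that the coset partition pulls back under $\pib$ to the invariant decomposition of $\Xx$, and $d<r_1$ follows from \eqref{eq:gcd} exactly as you derive it. Your hesitation about the lower bound is well founded: $1\le d$ does not hold in general, and the paper's own Example (a) after Theorem \ref{thm:posrec}, with $\mu=\frac12(\delta_{(2,3)}+\delta_{(3,2)})$ on $\N_0^2$, has $\Gamma=\{0,1\}^2$, hence a single class and $d=0$ although $r_1=2$; the bound should be read as $0\le d<r_1$, the case $d=0$ being the trivial one-class situation, and nothing in the subsequent arguments uses $d\ge1$.
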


Thus, even though all marginal one-dimensional reflected walks are (topologically)
irreducible on the respective sets $\Ll_i$ ($i=1, \dots, r$), the multidimensional 
reflected random walk may 
have a decomposition into non-interacting parts. We shall see an example further below;
in particular, the structure of the essential class(es) is not as simple
as in the one-dimensional case \eqref{eq:Ll}.
Of course, in the non-lattice case $r_1=0$, we will not have more than one class;
in that case, we set $d=0$ and  $\Xx^{(1)} = \Xx$. 

\begin{thm}\label{thm:posrec}
Let $\mu$ be a probability measure on $\Z^{r_1}\times \R^{r_2}$ whose
lattice marginals $\mu_i$ ($i=1,\dots, r_1$) satisfy \eqref{eq:gcd}, while for
$i > r_1\,$, the marginals are non-lattice.  

Suppose that for each $i \in \{ 1, \dots, r\}$, the one-dimensional
reflected random walk induced by $\mu_i$ is positive recurrent on the respective
set $\Ll_i$ according to \eqref{eq:Ll}.

Then each class\/ $\Xx^{(j)}$ of \eqref{obs:cosets} carries a unique 
invariant probability measure $\nu^{(j)}$ for the 
$r$-dimensional reflected random walk induced by $\mu$. Reflected
random walk started in any point of\/ $\Xx^{(j)}$ is a.s. absorbed by\/ $\Ll^{(j)} = \supp(\nu^{(j)})$,
and it is positive recurrent on\/ $\Ll^{(j)}$.
\end{thm}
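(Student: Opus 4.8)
The plan is to build the invariant probability measure on each class $\Xx^{(j)}$ by hand as (essentially) a product of the one–dimensional invariant measures, and then invoke the local-contractivity machinery of \S2 to get uniqueness, absorption, and positive recurrence for free. First I would reduce to studying a single class: by Observation \ref{obs:cosets} the process started in $\Xx^{(j)}$ stays there, so I may fix $j$ and work inside $\Xx^{(j)}$, which is a proper metric space in the induced metric. On each lattice coordinate $i\le r_1$ I would use Lemma \ref{lem:invmeas}(b) (or rather its two-sided version via Proposition \ref{pro:sqrt}/\eqref{eq:lift-meas}) together with Corollary \ref{cor:induced}: the one-dimensional walk $(X_{n,i}^{x_i})$ is positive recurrent with a unique invariant probability $\nu_i$ on $\Ll_i$, which splits into its even and odd parts $\nu_i = \nu_i^{(0)} + \nu_i^{(1)}$ supported on $\Ll_i\cap 2\Z$ and $\Ll_i\cap(2\Z+1)$. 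On each non-lattice coordinate $i>r_1$ I have $\nu_i$ from Proposition \ref{pro:contractive} plus Proposition \ref{pro:invmeas}. The candidate measure on $\Xx_{\epb}$ is the product of the $\nu_i^{(\epsilon_i)}$ in the lattice coordinates and the $\nu_i$ in the non-lattice ones; summing over $\epb$ in the coset $\Gamma^{(j)}$ and normalising gives $\nu^{(j)}$.

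Next I would verify that this product measure is genuinely invariant for the $r$-dimensional reflected walk. This is where one must be a little careful: the coordinates of $X_n^x$ are driven by the \emph{same} random vectors $(Y_n)$, so they are not independent, and a naive "product of invariant measures is invariant" argument does not literally apply. The correct statement is that the image of $\mu$ under the coordinatewise reflection maps still pushes the product measure to itself, because invariance is tested coordinate by coordinate through the defining identity $\int \ch_B(X_1^x)\,d\nu(x)=\nu(B)$ for product sets $B=B_1\times\cdots\times B_r$: one conditions on $(Y_1=y)$ and uses that $x_i\mapsto|x_i-y_i|$ pushes $\nu_i$ (restricted to the right parity class) forward correctly in each coordinate, then integrates in $y$. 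Because the maps act coordinatewise on a product measure, the joint push-forward is the product of the marginal push-forwards, and each marginal is invariant by the one-dimensional theory; a $\pi$–$\lambda$ argument extends this from product sets to all Borel sets. One also has to check that the support of this measure is exactly a union over the coset of the sets $\prod_i \Ll_i$ intersected with the correct parity pattern, so that $\nu^{(j)}$ is a Radon (indeed finite, after normalising) measure on $\Xx^{(j)}$ with full support on the natural essential class.

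Having an invariant probability measure, the rest is soft. The $r$-dimensional reflected walk is an SDS of contractions on $\Xx^{(j)}$ (each $f_y(x)=|x-y|$ is $1$-Lipschitz), and by \eqref{eq:multi-contr}—which follows from Corollary \ref{cor:induced} applied to the lattice coordinates and Propositions \ref{pro:contractive}, \ref{pro:const} to the non-lattice ones—we have $\|X_n^x-X_n^y\|\to0$ a.s.\ whenever $\pib(x)=\pib(y)$. Within a single class $\Xx^{(j)}$ any two parity patterns $\epb,\epb'\in\Gamma^{(j)}$ communicate through the hypercube walk, so a short stopping-time argument (stop when $\pib(X_n^x)$ hits $\pib(y)$, which happens a.s.\ in finitely many steps since the factor chain on the finite coset is irreducible and positive recurrent) upgrades this to strong contractivity on all of $\Xx^{(j)}$: $\|X_n^x-X_n^y\|\to0$ a.s.\ for all $x,y\in\Xx^{(j)}$. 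In particular the SDS is locally contractive on $\Xx^{(j)}$, so Proposition \ref{pro:invmeas}(a) gives recurrence, Proposition \ref{pro:const} gives absorption by the attractor $\Ll^{(j)}$, and Proposition \ref{pro:invmeas}(b) identifies $\Ll^{(j)}=\supp(\nu^{(j)})$ and yields uniqueness of $\nu^{(j)}$ up to constants, hence uniqueness as a probability measure, together with positive recurrence since $\nu^{(j)}(\Ll^{(j)})=1<\infty$.

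The main obstacle, and the step that deserves the most care in the write-up, is the invariance verification for the correlated coordinates: one must make precise that testing invariance on product sets suffices and that coordinatewise maps act on the product measure coordinatewise, rather than appealing to an independence that is not there. A secondary technical point is the bookkeeping for the two-sided increments: when $\supp(\mu_i)\not\subset\R_+$ the one-dimensional invariant measure is obtained via the ladder construction \eqref{eq:lift-meas} rather than the closed form of Lemma \ref{lem:invmeas}, and one has to make sure these lifted measures still split cleanly along parity and still multiply to a finite invariant measure; but the positive-recurrence hypothesis on each $\mu_i$ guarantees finiteness of each factor, so the product is finite and everything goes through.
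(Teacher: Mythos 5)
Your construction of the invariant measure is the crux, and it does not work. For a \emph{fixed} $y$, the coordinatewise map $x\mapsto|x-y|$ does push a product measure to the product of the one--dimensional push-forwards, as you say; but the invariance identity requires you to integrate these product measures over $y$ against the \emph{joint} law $\mu$, and a $\mu$-mixture of product measures is again a product only when the coordinates of $Y_1$ are independent. The one-dimensional invariance $\int (f_{y_i})_*\nu_i \, d\mu_i(y_i)=\nu_i$ only identifies the \emph{marginals} of that mixture, not the mixture itself, so your "conditions on $Y_1=y$, push forward coordinatewise, integrate in $y$" step tacitly assumes $\mu=\mu_1\otimes\cdots\otimes\mu_r$, which the theorem does not. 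Concretely, take $\mu=\frac12\bigl(\delta_{(2,3)}+\delta_{(3,2)}\bigr)$ on $\Z^2$ (the paper's first example after the theorem): there $\Gamma=\{0,1\}^2$, the unique invariant probability is supported on $\{0,1,2,3\}^2\setminus\{(0,0),(2,3),(3,2),(3,3)\}$, whereas the product of the marginal invariant measures from Lemma \ref{lem:invmeas}(b) charges the transient state $(0,0)$ (each factor gives mass $\tfrac12(1-\mu_i(0))>0$ to $0$), so it cannot be invariant. The paper's route is genuinely different: on each parity class $\Xx_{\epb}$ the invariant law is obtained as the distribution of the a.s.\ limit of the \emph{backward} compositions $F_1\circ\cdots\circ F_n(x)$ of the induced system (Proposition \ref{pro:reverse}); its marginals are the one-dimensional invariant laws, but the joint law keeps the dependence between coordinates and is in general not a product. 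Making that argument run requires exactly the step you skip, namely the paper's ``Claim'' that each marginal of the chain induced at the times $\taub(n)$ is positive recurrent, proved via the auxiliary chains $\pib'(X_n^x)$ and $\pib''(X_n^x)$.

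A second error undercuts your ``soft'' part. If $\pib(x)\ne\pib(y)$ with both in $\Gamma^{(j)}$, then $\pib(X_n^x)-\pib(X_n^y)=\pib(x)-\pib(y)\ne\zero$ for \emph{every} $n$, because both parity patterns are shifted by the same $\pib(S_n)$; hence some lattice coordinates of $X_n^x$ and $X_n^y$ always have opposite parities, so $\|X_n^x-X_n^y\|\ge 1$ for all $n$. There is therefore no stopping-time upgrade of \eqref{eq:multi-contr} to strong contractivity on all of $\Xx^{(j)}$, and when $|\Gamma^{(j)}|>1$ the un-induced walk is not even locally contractive there: recurrence to a compact set together with a distance bounded below by $1$ violates Definition \ref{def:loccont}. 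Consequently Propositions \ref{pro:invmeas} and \ref{pro:const} cannot be applied to the original chain on $\Xx^{(j)}$ as you propose; they apply only to the induced process on each $\Xx_{\epb}$, and the invariant probability and positive recurrence on $\Xx^{(j)}$ must then be reconstructed through the return-time formula, as is done in the paper.
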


\begin{proof} If $r_1=0$ then the proof simplifies, as we shall clarify at the
end. So assume $r_1 \ge 1$. As in 
\eqref{eq:stop}, we consider the a.s. finite stopping times 
\begin{equation}\label{eq:stop2}
\begin{aligned}
\taub(0)=0\AND
\taub(n) &= \inf \{ k > \taub(n-1) : \pib(X_k^x) = \pib(x) \}\\ &=  \inf \{ k > \taub(n-1) : \pib(S_k) = \zero \}\,,
\end{aligned}
\end{equation}
where again $S_k = Y_1 + \dots + Y_k \in \Z^{r_1} \times \R^{r_2}$. Once more, the increments
$\taub(n) - \taub(n-1)$, $n \ge 1$, are i.i.d.
The stationary probability distribution of $\bigl(\pib(X_n^x)\bigr)$ on $\Gamma^{(j)}$ is uniform, 
whence $\Ex\bigl(\taub(1)\bigr)=|\Gamma|$. We look at the induced process
$(X_{\taub(n)}^x)_{n \ge 0}$ on each set $\Xx_{\epb}\,$, where $\epb \in \{0,1\}^{r_1}$. 
As in \eqref{eq:induced}, it is an SDS induced by the i.i.d. 
multidimensional contractions
\begin{equation}\label{eq:ind-mult}
\begin{aligned}
F_n &= f_{Y_{\tb(n)}} \circ f_{Y_{\tb(n)-1}} \circ \dots \circ f_{Y_{\tb(n-1)+1}}\,,\quad\text{with}\\
F_n(x_1, \dots, x_r) &= \Bigl( F_{n,1}(x_1), \dots, F_{n,r}(x_r)\Bigr)\,, \quad\text{where}\\
F_{n,i} &= f_{Y_{\tb(n)-1,i}} \circ \dots \circ f_{Y_{\tb(n-1)+1,i}}\,.
\end{aligned}
\end{equation}
Here, $Y_{k,i}$ is of course the $i$-th coordinate of the random vector $Y_k\,$, and as above 
$f_{b}(x_i) = |x_i-b|$ for $b, x_i \in \R$. Note that the random
mappings $F_n$ do not depend on the point $x$ or the class $\Xx_{\epb}$ where the process starts.
By \eqref{eq:multi-contr}, the SDS $(X_{\taub(n)}^x)$ is strongly contractive on each $\Xx_{\epb}\,$.
We write $\Ll_{\epb}$ for its attractor. Hence, each of its marginal processes is 
also strongly contractive;  for any starting point, it is absorbed by its attractor, which 
is the respective projection of $\Ll_{\epb}\,$. (Here, ``absorbed'' means in the lattice case that with
probability $1$ it belongs to the attractor from some time onwards, while in the non-lattice case,
the distance to the attractor tends to $0$.)

\bigskip

\emph{Claim.} Each marginal process $(X_{\taub(n),i}^{x_i})_{n \ge 0}$ is positive recurrent
on its attractor.

\bigskip

In spite of being ``obvious'', this needs justification.

\smallskip
 
We start by considering the first marginal of $(X_n^x)$, which is driven
by the lattice distribution $\mu_1\,$. We can apply the reasoning of Lemma 
\ref{lem:evodd} and the subsequent lines to $(X_{n,1}^{x_1})$. 
Define
$$
\pib': \Xx \to \N_0 \times \{0,1\}^{r_1-1}\,,\quad \pib'(x_1, \dots, x_r) 
= \bigl(x_1, \pi(x_2), \dots, \pi(x_{r_1})\bigr).
$$
The process $\bigl(\pib'(X_n^x)\bigr)_{n \ge 0}$ is ``reflected random walk on $\N_0$ with
internal degrees of freedom''. Its transition probabilities are
\begin{equation}\label{eq:transprob'}
p'\bigl((x_1,\epb'),(y_1, \overline\epb') \bigr) = \Prob\bigl[\, |x_1 - Y_{1,1}|=y_1\,,\;
\bigl(\pi(Y_{1,2}),\dots,\pi(Y_{1,r_1})\bigr) = \overline\epb' - \epb'\,\bigr]\,,
\end{equation}
where of course $\overline\epb' - \epb'$ is taken mod $2$. Observation \ref{obs:cosets}
applies to $\bigl(\pib'(X_n^x)\bigr)$ if one replaces $\Xx^{(j)}$ with
$$
\pib'(\Xx^{(j)}) = \bigl\{ (x_1, \epb') : \bigl( \pi(x_1), \epb' \bigr) \in \Gamma^{(j)} \bigr\}.
$$
Since the transition probabilities \eqref{eq:transprob'} are additive mod $2$ in the 
$\epb'$-coordinates, an invariant measure with finite total mass for 
$\bigl(\pib'(X_n^x)\bigr)$ is given by
$$
\nu_1'(x_1,\epb') = \nu_1(x_1)\,,
$$
where $\nu_1$ is the invariant probability distribution for the first marginal process
driven by $\mu_1\,$. We let $\nu_1^{(j)}$ be the probability measure obtained by
restricting $\nu_1'$ to $\pi'(\Xx^{(j)})$ and normalising it. We shall see that 
$\supp(\nu_1^{(j)})$ is the only essential class of $\bigl(\pib'(X_n^x)\bigr)$
within $\pi'(\Xx^{(j)})$. 

In any case, $\bigl(\pib'(X_n^x)\bigr)$ is positive recurrent in the irreducible (whence 
essential) class of each point $(x_1,\epb')$ with $x_1 \in \supp(\nu_1)$.
We have $\pib(x) = \epb = (\ep_1, \epb')$, where $\pib'(x) = (x_1\,,\epb')$ and $\ep_1=\pi(x_1)$.
The stopping times 
$\taub(n)$ are the successive instants when 
$\bigl(\pib'(X_n^x)\bigr)$ visits the subset 
$(2\cdot \N_0 + \ep_1) \times \{\epb'\}$.
Thus, if $x$ is such that $x_1 \in \supp(\nu_1)$,  
then the return time of $\bigl(\pib'(X_n^x)\bigr)$ to $(x_1,\epb')$ has finite expectation.
At that return time, also $\bigl(\pib'(X_{\taub(n)}^x)\bigr)$ is back at $(x_1,\epb')$,
whence also the return time of $\bigl(\pib'(X_{\taub(n)}^x)\bigr)$ has finite expectation.
But the first marginal of $\bigl(\pib'(X_{\taub(n)}^x)\bigr)$
is just the first marginal of $(X_{\taub(n)}^x)$, so that the return time of the first marginal
process also has finite expectation.

\smallskip

This argument shows that all the lattice marginal processes $(X_{\taub(n),i}^{x_i})$,
$i=1, \dots, r_1$, are positive recurrent on their respective attractors (as we know that
they are strongly contractive, whence the respective attractor -- depending on $\pib(x)$ -- 
is the unique essential class). 

\smallskip

Now suppose that there are also non-lattice marginals, i.e., $r > r_1\,$.
Then we consider the last marginal of $(X_n^x)$, which is driven
by the non-lattice distribution $\mu_r\,$. We know from 
propositions \ref{pro:contractive} and \ref{pro:invmeas} that this marginal
SDS is strongly contractive with invariant probability measure $\nu_r\,$.
Its attractor is $\supp(\nu_r)$.

For any $x \in \Xx_{\ep}\,$, the $r^{\text{th}}$ marginal process 
$(X_{\taub(n),r}^{x_r})$ is a strongly contractive sub-SDS of $(X_{n,r}^{x_r})$.
This time we define
$$
\pib'': \Xx \to \{0,1\}^{r_1} \times \R_+\,,\quad 
\pib''(x)  = \bigl(\pib(x), x_r\bigr).
$$
The transition probabilities of the process $\bigl(\pib''(X_n^x)\bigr)_{n \ge 0}$ are
\begin{equation}\label{eq:transprob''}
p''\bigl((\epb, x_1), \{\overline\epb\}\times B \bigr) = \Prob\bigl[\, |x_r - Y_{1,r}| \in B\,,\;
\bigl(\pi(Y_{1,1}),\dots,\pi(Y_{1,r_1})\bigr) = \overline\epb - \epb\,\bigr]\,,
\end{equation}
again taking $\overline\epb - \epb$ mod $2$, where $B \subset \R_+$ is a Borel set.
Again, Observation \ref{obs:cosets}
applies to $\bigl(\pib''(X_n^x)\bigr)$ if one replaces $\Xx^{(j)}$ with
$$
\pib''(\Xx^{(j)}) = \Gamma^{(j)} \times \R_+\,.
$$
Once more, since the transition probabilities \eqref{eq:transprob''} are additive mod $2$ in 
the $\epb$-coordinates, an invariant
measure with finite total mass for $\bigl(\pib''(X_n^x)\bigr)$ is given by
$$
\nu_r''(\{\epb\}\times B) = \nu_r(B)\,,
$$
where $\nu_r$ is the invariant probability distribution for the $r^{\text{th}}$ marginal process
driven by $\mu_r\,$. That marginal process is strongly contractive, and its attractor is
$\supp(\nu_r)$.

The projected random walk $\bigl(\pib(X_n^x)\bigr)$ is positive recurrent on each of its 
irreducible classes $\Gamma^{(j)}$. If $\epb \in  \Gamma^{(j)}$ and $x \in \Xx_{\epb}$
then $\pi''(X_{\epb}) = \{\epb\} \times \R_+$ is a recurrent set for $\bigl(\pib''(X_n^x)\bigr)$. 
It is a straighforward and well-known consequence 
that the restriction of $\nu_r''$ to $\{\epb\} \times \R_+$ is an invariant measure
for the induced process on that recurrent set; see e.g. the proof of \cite[Lemma 2.6]{PeWo2}
(which at first yields execcisivity of the restriction, while invariance follows
from the fact that the restricted measure has finite total mass). Now, that induced
process is nothing but $\bigl( \epb, X_{\taub(n),r}^x \bigr)$. Therefore
$\nu_r$ is the unique invariant probability measure of $\bigl(X_{\taub(n),r}^x \bigr)$.
Since the latter process is strongly contractive, $\supp(\nu_r)$ is its attractor,
and the process is positive recurrent on that set.

\smallskip

Again, this argument applies to all non-lattice marginals of our SDS, and
the claim is proved. 

\smallskip

We know (via Proposition \ref{pro:invmeas}) that for every starting point $x \in \Xx$, each marginal 
SDS $(X_{\taub(n),i}^{x_i})$  has a unique invariant probability measure $\nu_{i, \epb}\,$ on its attractor,
which depends on $\epb=\pib(x)$. By Proposition
\ref{pro:reverse}, there is a non-negative integer, resp. real random variable $Z_{i, \epb}$ 
such that for the reversed process, we have  
$$
\wh X_{\taub(n),i}^{x_i} = F_{1,i} \circ F_{2,i} \circ \dots \circ F_{n,i}(x_i) \to Z_{i, \epb}
\quad\text{almost surely}
$$
for each $x=(x_1, \dots, x_r)  \in \Xx_{\epb}\,$, with the $F_{k,i}$ given in \eqref{eq:ind-mult} . But then we get that
$$
\wh X_{\taub(n)}^x = F_1 \circ F_2 \circ \dots \circ F_n(x) \to Z_{\epb} = (Z_{1, \epb}\,,\dots, Z_{r, \epb})
\quad\text{almost surely} 
$$
for each $x \in \Xx_{\epb}\,$. Since the limit random variable $Z_{\epb}$ does not depend on the
starting point, its distribution $\nu_{\epb}$ is an invariant probability measure for
$(X_{\taub(n)}^x)$, and $\Ll_{\epb} = \supp(\nu_{\epb})$. 
We note that the marginals of $\nu_{\epb}$ are the above measures 
$\nu_{i, \epb}\,$. (Recall here that for $r_1 < i \le r$, we have 
$\nu_{i, \epb} = \nu_i\,$,
the invariant probability measure for the reflected random walk driven by the marginal $\mu_i\,$.)

\smallskip

Now suppose that the starting point $x$ lies in $\Xx^{(j)}$. The projected random walk
$\bigl(\pib(X_n^x)\bigr)$ is positive recurrent on $\Gamma^{(j)}$. Therefore $(X_n^x)$ visits each 
$\Xx_{\epb} \subset \Xx^{(j)}$ infinitely often with probability $1$. Since the $\taub(n)$
are the times of the successive return visits to each of those $\Xx_{\epb}\,$,
we see that the set of accumulation points of  $(X_n^x)$ coincides almost
surely with 
\begin{equation}\label{eq:Lj}
\Ll^{(j)} = \bigcup_{\epb \in \Gamma^{(j)}} \Ll_{\epb}\,.
\end{equation}
We choose $\epb \in \Gamma^{(j)}$ and use $\nu_{\epb}$ to construct a probability
measure on $\Xx^{(j)}$ by
$$
\begin{aligned}
\nu^{(j)}(B) 
&= \frac{1}{\Ex\bigl(\taub(1)\bigr)} \int_{\Ll_{\epb}} 
                   \Ex \Biggl( \sum_{n=0}^{\taub(1)-1} \ch_B(X_n^x)\Biggr) d\nu_{\epb}(x) \\
&= \frac{1}{|\Gamma|} \sum_{n=0}^{\infty}
         \int_{\Ll_{\epb}} \Prob[X_n^x \in B\,,\; \taub(1) \ge n+1]\,  d\nu_{\epb}(x)\,,         
\end{aligned}
$$
where $B \subset \Xx^{(j)}$ is a Borel set. It is well known and easy to verify that
this is an invariant probability measure for $(X_n^x)$. 

Suppose that $\nu$ is an arbitrary invariant probability measure for $(X_n^x)$ on $\Xx^{(j)}$.
Every point in $\Xx^{(j)} \setminus \Ll^{(j)}$,
not being an accumulation point of $(X_n^x)$, is transient (has a neighbourhood which is 
visited only finitely often). Thus, we must have $\supp(\nu) \subset \Ll^{(j)}$.
On the other hand, invariance of $\nu$ implies that $X_1^x \in  \supp(\nu)$ a.s.
for any $x \in  \supp(\nu^{(j)})$, and iterating, the entire trajectory of $(X_n^x)$ is
in $\supp(\nu)$. We see that
$\supp(\nu) = \Ll^{(j)}$.

The projected probability measure $\pib(\nu)$ must be invariant for the factor chain 
$\bigl(\pib(X_n^x)\bigr)$ in $\Gamma^{(j)}$. Therefore $\nu(\Xx_{\epb}) = 1/|\Gamma|$
for every $\epb \in \Gamma^{(j)}$. 
It is again a well-known fact that the normalised restriction of $\nu$ to $\Xx_{\epb}$
must be the (as we know, unique) invariant probability measure for the induced 
process $(X_{\taub(n)}^x)$ on that set. Thus, $\nu = \nu^{(j)}$ is unique, 
$$
\nu^{(j)} = \frac{1}{|\Gamma|} \sum_{\epb \in \Gamma^{(j)}}  \nu_{\epb}\,,
$$
where $\nu_{\epb}$ is viewed as a measure on the whole of $\Xx^{(j)}$.
This concludes the proof in the presence of lattice marginals.

\medskip

In the purely non-lattice case when $r_1=0$, we do not need to pass to an induced 
subsystem which becomes contractive: the reversal argument applies directly to
the original reflected random walk. Indeed, this is the case treated by \cite[Th. 24]{Pe}.
\end{proof}

We know that the one-dimensional marginals of each of the invariant probability measures 
$\nu^{(j)}$ on the different parts $\Xx^{(j)}$ of the state space
are the invariant measures $\nu_i$ of the marginal processes, which are supported by the
intervals $[0\,,N_i] \cap \R_+\,$, resp. $[0\,,N_i] \cap \N_0\,$.  
In the higher-dimensional case, the essential classes $\Xx^{(j)}$
where the reflected random walk takes place -- the respective support of the $\nu^{(j)}$ 
- are not easily determined. We illustrate this by the following simple examples.

\begin{exs} We let $\Xx = \N_0^2$.
\\[3pt]
(a) Let $\mu = \frac12(\delta_{(2,3)} + \delta_{(3,2)})$. Then $N_1 = N_2 = 3$
and the reflected random walk is absorbed by (a subset of) $\{0,1,2,3\}^2$.
We have $\Gamma = \{0,1\}^2$, and there is only one essential class.
Indeed, there are the three irreducible classes
$$
\{(0,0), (2,3), (3,2)\}\,,\quad \{(3,3)\} \AND  
\{0,1,2,3\}^2 \setminus \{(0,0), (2,3), (3,2), (3,3) \}.
$$
The latter is the essential one.
\\[3pt]
(b) Let $\mu = \frac12(\delta_{(-1,2)} + \delta_{(2,-1)})$. Then $N_1=N_2=\infty$
and $\Gamma = \{0,1\}^2$. Again, there is only one essential class,
and one finds that this is $\N_0^2 \setminus \{(0,0)\}$.
\\[3pt]
(c) Let $\mu = \frac12(\delta_{(-1,3)} + \delta_{(3,-1)})$. Again, $N_1=N_2=\infty$
but $\Gamma = \{ (0,0), (1,1)\}$. Reflected random walk evolves on
the two separated parts 
$$
\Xx^{(1)} = \{ (k,l) \in \N_0^2 : k+l \;\text{is odd}\,\}
\AND
\Xx^{(2)} = \{ (k,l) \in \N_0^2 : k+l \;\text{is even}\,\}.
$$
While the whole of $\Xx^{(1)}$ is an essential class and thus equal to $\Ll^{(1)}$,
the essential class within $\Xx^{(2)}$ is  $\Ll^{(2)} = \Xx^{(2)} \setminus \{(0,0)\}\,$. 
\\[3pt]
One can also find examples as in (b) or (c) where a bigger region around the origin
is not part of the attractor.\qed 
\end{exs}

\begin{rmks}\label{rmk:noteasy}
(a) In view of Proposition \ref{pro:rec}, the sets $\Ll_{\epb}$ only depend on $\supp(\mu)$,
and thus also the set $\Ll^{(j)}$ of \eqref{eq:Lj} does not depend on recurrence, but just
on $\supp(\mu)$. And as long as all marginals satisfy $\mu_i\bigl((0\,,\,\infty)\bigr) > 0$,
we can modify $\mu$ to obtain another probability measure with the same support that induces 
a reflected random walk which is positive recurrent on each $\Xx^{(j)}$ 
(or, more precisely, $\Ll^{(j)}$).
\\[6pt]
(b) There is a very simple argument, communicated to us by Nina Gantert, which shows
at least in the discrete case  $(r_2=0)$ that positive recurrence of each of the
marginal processes implies that RRW starting from any point in $\N_0^r$ must be absorbed
by a positive recurrent essential class. We display that argument here, for simplicity
taking only $r=2$. 
There must be finite sets $A_1\,, A_2 \subset \N_0$ such that 
$\nu_1(A_1) + \nu_2(A_2) > 1$, where the $\nu_i$ are the respective
stationary probability measures. Then for $x \in \N_0^2$, by the convergence theorem,
$$
\begin{aligned}
\frac{1}{n}\sum_{k=0}^{n-1}\Prob[ X_k^x \in A_1 \times A_2] 
&\ge \frac{1}{n}\sum_{k=0}^{n-1}\Bigl(
\Prob[X_{k,1}^{x_1} \in A_1] + \Prob[X_{k,2}^{x_2} \in A_2]-1\Bigr)\\
&\to \nu_1(A_1) + \nu_2(A_2) - 1 > 0\,, \quad\text{as }\; n \to \infty\,.
\end{aligned}
$$
Thus, one would think that the first issue
is to use purely algebraic arguments involving only $\supp(\mu)$ which should lead to
a description of the essential classes of RRW, showing that there is precisely one within
each $\Xx^{(j)}$. However, to the authors it is by no means obvious how to achieve this without
involving the local contractivity arguments used above. Indeed, already in the one-dimensional
case, without use of local contractivity (which works via the algebraic Proposition \ref{pro:even}),
the corresponding reasoning is amazingly hard: quoting \cite[p. 100]{Bo2}, ``d'une surprenante 
difficult\'e$\,$'' -- even though in dimension 1 the stationary distribution is known
explicitly.\qed
\end{rmks}

The next question is whether one can get a more general recurrence result regarding null recurrence,
that is, when some of the marginal distributions give rise to null recurrent
reflected random walks; compare with propositions \ref{pro:refl-recurr} and \ref{pro:sqrt}.
This appears to be a hard task. We next show that in general, for recurrence one 
cannot have more
than two marginals which are only null recurrent.

Consider $\mu$  on $\R^r$. We take a sequence $(\ee_{n,i})_{n \ge 0, 1 \le i \le r}$ of i.i.d. 
random variables which are equidistributed on $\{ \pm 1\}$
and independent of $(Y_n)_{n \ge 1}\,$.
For each one-dimensional marginal $\mu_i$ and the associated coordinates 
$Y_{n,i}$ we consider the associated process 
$$
W_{0,i}^{x_i} =  x_i\,,  \AND W_{n+1,i}^{x_i} = W_{n,i}^{x_i} + E_{n,i}^{x_i} Y_{n+1,i}\,,
\quad\text{where}\quad
E_{n,i}^{x_i} = \begin{cases} -1\,,&\text{if}\; W_{n,i}^{x_i} > 0\,,\\
                      \ee_{n,i}\,,&\text{if}\; W_{n,i}^{x_i} = 0\,,\\
                      \; 1\,,&\text{if}\; W_{n,i}^{x_i} < 0\,.
        \end{cases}
$$
Then we have
$$
|W_n^x| = X_n^{|x|}\,,
$$
where (recall) absolute values are taken coordinate-wise. The following is a straightforward exercise.

\begin{lem}\label{lem:symm} If $\mu$ is \emph{fully symmetric,} that is,
invariant under all coordinate reflections $x_i \mapsto -x_i$ ($i = 1,\dots, d$),
then the $r$-dimensional increments
$$
\wt Y_n = E_{n-1}^{x} \cdot Y_n = \bigl(E_{n-1,1}^{x_1}Y_{n,1}\,,\dots, E_{n-1,d}^{x_d}Y_{n,d}\bigr)
$$
are i.i.d. $\mu$-distributed. In particular, for any $x \in \Xx$ and Borel set $B \in \R_+^r$,
\begin{equation}\label{eq:return-symm}
\begin{gathered}
\Prob[X_n^x \in B] = \Prob[x+S_n \in B^*]\,,\quad\text{where}\\ 
B^* = \{ (\pm y_1\,,\dots, \pm y_r) : (y_1\,,\dots, y_r) \in B \}.
\end{gathered}
\end{equation}
\end{lem}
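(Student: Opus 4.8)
The plan is to prove by successive conditioning that the modified increments $\wt Y_n=E_{n-1}^x\cdot Y_n$ form an i.i.d. $\mu$-sequence, full symmetry of $\mu$ being precisely what makes the random signs $E_{n-1}^x$ disappear. For $n\ge1$ let $\mathcal{G}_{n-1}$ be the $\sigma$-algebra generated by $Y_1,\dots,Y_{n-1}$ together with all auxiliary signs $\ee_{k,i}$ with $k\le n-1$. The first point is the measurability bookkeeping: $W_{n-1,i}^{x_i}$ is a function of $x_i$, of $Y_{1,i},\dots,Y_{n-1,i}$ and of $\ee_{k,i}$ with $k\le n-2$, and $E_{n-1,i}^{x_i}$ uses in addition $\ee_{n-1,i}$ in the boundary case; in every case $E_{n-1,i}^{x_i}\in\{\pm1\}$, so $E_{n-1}^x$ is a $\mathcal{G}_{n-1}$-measurable sign vector, while $Y_n$ is independent of $\mathcal{G}_{n-1}$ (the families $(Y_n)$ and $(\ee_{n,i})$ being independent). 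Note also that $\wt Y_1,\dots,\wt Y_{n-1}$ are $\mathcal{G}_{n-1}$-measurable.

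The key step: since $Y_n$ is independent of $\mathcal{G}_{n-1}$ and $E_{n-1}^x$ is $\mathcal{G}_{n-1}$-measurable, for every bounded measurable $g\colon\R^r\to\R$,
\[
\Ex\bigl(g(\wt Y_n)\,\big|\,\mathcal{G}_{n-1}\bigr)=\int_{\R^r}g\bigl(E_{n-1}^x\cdot y\bigr)\,d\mu(y).
\]
Full symmetry of $\mu$ is exactly the statement that $\mu$ is invariant under $y\mapsto\epb\cdot y$ for every $\epb\in\{\pm1\}^r$ (such a map being a composition of the coordinate reflections $y_i\mapsto-y_i$), so the right-hand side equals $\int g\,d\mu$ whatever the value of $E_{n-1}^x$. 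Hence $\Ex\bigl(g(\wt Y_n)\mid\mathcal{G}_{n-1}\bigr)=\int g\,d\mu$ is deterministic, which shows simultaneously that $\wt Y_n$ has law $\mu$ and that $\wt Y_n$ is independent of $\mathcal{G}_{n-1}$, in particular of $(\wt Y_1,\dots,\wt Y_{n-1})$. Running $n$ over $\N$ and using the standard criterion for independence of a sequence, $(\wt Y_n)_{n\ge1}$ is i.i.d. with law $\mu$.

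For the displayed formula \eqref{eq:return-symm} I would telescope the defining recursion to obtain $W_n^x=x+\wt Y_1+\dots+\wt Y_n$, which by the first part has the same law as $x+S_n$. Together with the identity $|W_n^x|=X_n^{|x|}$ recalled above (here $|x|=x$ since $x\in\Xx$, and the identity follows from a one-line coordinatewise induction distinguishing the sign of $W_{n,i}^{x_i}$), this gives, for any Borel set $B\subset\R_+^r$,
\[
\{X_n^x\in B\}=\{|W_n^x|\in B\}=\{W_n^x\in B^*\},
\]
since for $w\in\R^r$ the condition $|w|\in B$ is equivalent to $w\in B^*$ when $B\subset\R_+^r$. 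Therefore $\Prob[X_n^x\in B]=\Prob[W_n^x\in B^*]=\Prob[x+S_n\in B^*]$.

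I do not expect a genuine obstacle. The only delicate points are choosing the filtration $\mathcal{G}_{n-1}$ large enough that $E_{n-1}^x$ is adapted but small enough that $Y_n$ remains independent of it, and noticing that full symmetry is needed for the simultaneous sign flip $y\mapsto\epb\cdot y$ rather than merely for single coordinates; the remainder is the routine machinery for establishing that a sequence is i.i.d.
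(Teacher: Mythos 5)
Your proof is correct: the paper offers no argument for this lemma (it is dismissed as ``a straightforward exercise''), and your freezing/conditioning argument with respect to $\mathcal{G}_{n-1}$, combined with the invariance of $\mu$ under all sign vectors $\epb\in\{\pm1\}^r$ and the identity $|W_n^x|=X_n^{|x|}$, is exactly the standard justification the authors intend. No gaps.
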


We observe that when $\supp(\mu)$ is a fully symmetric set, then the induced reflected random walk
is such that $\Ll^{(j)} = \Xx^{(j)}$ for the essential classes given by \eqref{eq:Lj}, resp. the
respective partition \eqref{obs:cosets} of the state space $\Xx$.

\begin{cor}\label{cor:dim}
Suppose that $\mu$ is fully symmetric.
Then reflected random walk induced by $\mu$ is transient whenever the dimension
is $r \ge 3$.
When $r \in \{1,2\}$, a sufficient condition for recurrence is that
$\mu$ has finite moment of order $r\,$.
\end{cor}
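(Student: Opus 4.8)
The plan is to reduce everything to the ordinary random walk by means of Lemma~\ref{lem:symm}. The one preliminary observation needed is that full symmetry of $\mu$, together with the standing hypothesis that every marginal satisfies \eqref{eq:nontriv}, forces $\mu$ to be genuinely $r$-dimensional, i.e.\ its support is not contained in any proper linear subspace: if $\langle a,y\rangle = 0$ held $\mu$-almost surely for some $a\neq 0$, then picking $i$ with $a_i\neq 0$, applying the coordinate reflection $y_i\mapsto -y_i$ (which preserves $\mu$), and adding and subtracting the two resulting identities gives $y_i=0$ $\mu$-a.s., contradicting $\mu_i\bigl((0,\infty)\bigr)>0$. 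From Lemma~\ref{lem:symm} I then use that, for $x\in\Xx$, the reflected walk may be realised coordinatewise as $X_n^x = |x+\wt S_n|$, where $\wt S_n = \wt Y_1 + \dots + \wt Y_n$ is an ordinary random walk with step law $\mu$; together with the fact, noted just before the statement, that $\Ll^{(j)}=\Xx^{(j)}$ when $\supp(\mu)$ is fully symmetric, this is all that full symmetry is used for.

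\textbf{Transience for $r\ge 3$.} A genuinely $r$-dimensional random walk on $\R^r$ with $r\ge 3$ is transient (Chung--Fuchs~\cite{ChFu}; in the lattice case classical), so $\|\wt S_n\|\to\infty$ almost surely. Since $\|X_n^x\| = \|x+\wt S_n\| \ge \|\wt S_n\| - \|x\|$, we get $d(X_n^x,x) = \|X_n^x - x\| \to \infty$ almost surely, which is exactly transience of the reflected walk.

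\textbf{Recurrence for $r\le 2$.} For $r=1$ this is precisely Lemma~\ref{lem:symmetric}. For $r=2$, a finite second moment of $\mu$ gives a finite first moment, hence (by symmetry) $\mu$ is centred with finite covariance matrix, so $(\wt S_n)$ is recurrent on $\R^2$ by the Chung--Fuchs theorem; in particular $\liminf_n \|\wt S_n\| = 0$ almost surely. Using $\bigl|\,|x_i+\wt S_{n,i}| - x_i\,\bigr| \le |\wt S_{n,i}|$ coordinatewise gives $\|X_n^x - x\| \le \|\wt S_n\|$, so $\liminf_n d(X_n^x,x) = 0$ almost surely, and the reflected walk is \emph{not} transient. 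To upgrade this to topological recurrence on each essential class $\Xx^{(j)}$, I would argue as in the proof of Theorem~\ref{thm:posrec}: the one-dimensional marginals of $\mu$ are symmetric with finite first moment, hence the marginal reflected walks are recurrent (Lemma~\ref{lem:symmetric} again, or Proposition~\ref{pro:sqrt}(b)), so \eqref{eq:multi-contr} holds and the induced process $(X_{\taub(n)}^x)$ is strongly — hence locally — contractive on each $\Xx_{\epb}$. The dichotomy of Proposition~\ref{pro:rec} then applies to $(X_{\taub(n)}^x)$; since this process is not transient (as $(X_n^x)$ is not), it is topologically recurrent on its attractor, which equals $\Xx_{\epb}$ because $\supp(\mu)$ is fully symmetric. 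Finally, recurrence of the finite factor chain $\bigl(\pib(X_n^x)\bigr)$ on $\Gamma^{(j)}$ transfers this back to recurrence of $(X_n^x)$ on $\Xx^{(j)}$.

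The transience direction is routine once the classical three-dimensional transience theorem is invoked. The step I expect to require the most care is the last one of the $r=2$ case: converting ``the walk returns near its starting point'' into full topological recurrence on the attractor through the parity/coset decomposition of the lattice part of $\Xx$. This does not seem doable by staying at the level of the ordinary walk $(\wt S_n)$ alone; one has to re-enter the local-contractivity machinery through the induced subsystem constructed in the proof of Theorem~\ref{thm:posrec}.
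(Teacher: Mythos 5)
Your argument is correct and is essentially the proof the paper intends: the corollary is presented as a direct consequence of Lemma \ref{lem:symm} (the pathwise realisation $X_n^x=|x+\wt S_n|$ with $(\wt S_n)$ an ordinary $\mu$-walk) combined with the classical Chung--Fuchs results, and your upgrade to topological recurrence for $r\in\{1,2\}$ re-enters the induced-subsystem/local-contractivity machinery exactly as in the proof of Theorem \ref{thm:posrec}. The only step deserving one extra line is your parenthetical ``the induced process is not transient as $(X_n^x)$ is not'': this holds because any visit of $(X_n^x)$ to the ball of radius $<1$ around $x$ forces the lattice coordinates to coincide with those of $x$, hence $\pib(X_n^x)=\pib(x)$, so such near-return times are automatically among the induced times $\taub(n)$.
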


We shall deduce from Theorem \ref{thm:r-s-rec}  below that this has the following generalisation.

\begin{cor}\label{cor:dim2} Let $\mu$ be a probability measure on $\R^{r+s}$
whose lattice marginals satisfy \eqref{eq:gcd}. Write $\mu_{\lfloor r \rfloor}$ for the 
$r$-dimensional
marginal of $\mu$ in the first $r$ coordinates and  $\mu_{\lceil s \rceil}$ for the $s$-dimensional
marginal of $\mu$ in the last $s$ coordinates, where $s \in \{ 1,2\}$. 
Suppose that the reflected random walk 
induced by $\mu_{\lfloor r \rfloor}$ is positive recurrent on each of its essential classes. 

If $\mu_{\lceil s \rceil}$ is fully symmetric and has finite moment of order $s$,
then the reflected random walk induced by $\mu$ is (topologically) null recurrent
on its essential classes.
\end{cor}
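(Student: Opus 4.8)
The plan is to read the statement off Theorem \ref{thm:r-s-rec}, after checking that its hypotheses are met, and then to add one short argument that excludes positive recurrence. So the first step is to identify the two parts of the process \eqref{eq:process}: the \emph{reflected part} is the $r$-dimensional reflected random walk driven by $\mu_{\lfloor r \rfloor}$, and the \emph{free part} is the ordinary random walk $w+Z_n$ on the group $G=\Z^{s_1}\times\R^{s_2}$ driven by $\mu_{\lceil s \rceil}$. By hypothesis the reflected part is positive recurrent on each of its essential classes (those described in Theorem \ref{thm:posrec} and Observation \ref{obs:cosets}), hence in particular topologically recurrent there. For the free part, full symmetry of $\mu_{\lceil s \rceil}$ forces each of its one-dimensional marginals to be symmetric about $0$, and since $s\ge 1$ the assumed finite moment of order $s$ gives a finite first moment; thus $\mu_{\lceil s \rceil}$ is centred. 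Moreover ``finite moment of order $s$'' is precisely the natural moment condition ($\Ex\|V_1\|<\infty$ for $s=1$, $\Ex\|V_1\|^2<\infty$ for $s=2$) appearing in Theorem \ref{thm:r-s-rec}. Hence that theorem applies and shows that the reflected random walk induced by $\mu$ is topologically recurrent on each of its essential classes.

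It remains to exclude positive recurrence. The process \eqref{eq:process} factorises, and its projection onto the last $s$ coordinates is exactly $(w+Z_n)$, a genuine $s$-dimensional random walk on the non-compact abelian group $G$. Being centred with finite moment of order $s\le 2$, this random walk is recurrent by the Chung--Fuchs theorem; but it is null recurrent, carrying no invariant probability measure (up to a scalar its only invariant Radon measure is the Haar measure of $G$, of infinite total mass). Now, if some essential class of the reflected random walk induced by $\mu$ were positive recurrent, it would carry an invariant probability measure $\nu$; since the free coordinates evolve autonomously, the image of $\nu$ under the projection to the last $s$ coordinates would be a nonzero finite invariant measure for $(w+Z_n)$ --- a contradiction. (Equivalently: every return of the full process to a bounded neighbourhood forces a return of the free marginal to a bounded set, and the latter has infinite expected return time.) Either way, the reflected random walk induced by $\mu$ is topologically null recurrent on each of its essential classes.

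The only points requiring any care are bookkeeping: checking that full symmetry together with the $s$-th moment bound reproduces exactly the centring and moment hypotheses of Theorem \ref{thm:r-s-rec}, and matching the essential classes of the full process with those of its reflected factor (which arise from the parity decomposition of the $r_1$ lattice reflected coordinates, Observation \ref{obs:cosets}). Both are routine. All the genuine work --- the recurrence assertion itself --- is carried out in Theorem \ref{thm:r-s-rec}, so I expect no further obstacle here; the only thing one must not forget is that, since the full process is not locally contractive (the free coordinates do not contract), one cannot invoke Proposition \ref{pro:invmeas} directly, which is why the exclusion of positive recurrence is phrased through the autonomous free marginal rather than through a uniqueness statement for the invariant measure of the whole process.
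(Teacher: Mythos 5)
Your argument establishes recurrence and nullity for the process $(X_n^x, v+Z_n)$ of \eqref{eq:process}, in which the last $s$ coordinates stay \emph{free} --- but that is not what the corollary asserts. The corollary sits in \S 4 and generalises Corollary \ref{cor:dim}: ``the reflected random walk induced by $\mu$'' means the walk with reflection in \emph{all} $r+s$ coordinates, on $\N_0^{r_1}\times\R_+^{r_2}$ enlarged by the reflected versions of the last $s$ coordinates. This is why the paper's proof does not stop at Theorem \ref{thm:r-s-rec} but adds a transfer step: an adaptation of Lemma \ref{lem:symm} showing that recurrence persists when the last $s$ coordinates are reflected as well. That transfer is the genuinely missing ingredient in your write-up, and it is exactly where the full-symmetry hypothesis does its real work; you use symmetry only to deduce $\Ex(V_1)=0$, i.e.\ you silently replace the hypothesis by a strictly weaker one (centredness), which should have been a warning sign. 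The transfer goes as in Lemma \ref{lem:symm}: randomise the signs of the last $s$ increments according to the half-line on which the corresponding coordinate currently lies; by the symmetry (invariance of the law under sign changes of those coordinates) the resulting increment vectors are again i.i.d.\ with law $\mu$, and the coordinate-wise absolute value in the last $s$ coordinates of the resulting process is precisely the fully reflected walk started at $(x,|v|)$. Hence every approach of $(X_n^x, v+Z_n)$ to $(x,v)$, or to any sign-flipped copy of $v$, forces an approach of the fully reflected walk to $(x,|v|)$, as in \eqref{eq:return-symm}, and the recurrence furnished by Theorem \ref{thm:r-s-rec} carries over to the process the corollary is actually about.

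Your exclusion of positive recurrence is sound in spirit but must also be re-aimed at the correct process: after reflecting the last $s$ coordinates, the autonomous marginal to project onto is the \emph{reflected} random walk driven by $\mu_{\lceil s\rceil}$ on $\N_0^{s_1}\times\R_+^{s_2}$, not the free walk $(v+Z_n)$. That marginal is recurrent but admits no invariant probability measure --- for instance, by \eqref{eq:return-symm} its probabilities of hitting a fixed compact set at time $n$ coincide with those of the centred random walk on the symmetrised set and hence tend to $0$, or equivalently the image of Haar measure under $x\mapsto|x|$ is an infinite invariant measure --- so an invariant probability for the full reflected process would project to an impossible object, and null recurrence follows just as in your argument. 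In short: your skeleton (Theorem \ref{thm:r-s-rec} plus a nullity argument via a marginal) matches the paper's, but as written it proves the statement for a different process; the symmetry coupling that converts the free coordinates into reflected ones is the step you cannot omit.
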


The following, regarding the joint observation of independent parts, is obvious.

\begin{lem}\label{lem:independent}
Suppose that  the probability measure $\mu$ on $\R^{r_1+r_2}$ is such that all lattice
marginals satisfy \eqref{eq:gcd} and
$$
\mu =  \mu_{\lfloor r_1 \rfloor}\otimes \mu_{\lceil r_2 \rceil}\,.
$$
If RRW driven by $\mu_{\lfloor r_1 \rfloor}$ is positive recurrent  and RRW driven 
by $\mu_{\lceil r_2 \rceil}$ is null recurrent,
then RRW  driven by $\mu$ is null recurrent (on the respective essential classes).

This holds in particular, when $r_2=1$ and one of the conditions of for null recurrence
of \S 3 is satisfied.
\end{lem}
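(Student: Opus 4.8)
The plan is to exploit the product structure. By the coordinate-wise factorisation recalled in the Introduction together with $\mu = \mu_{\lfloor r_1 \rfloor}\otimes\mu_{\lceil r_2 \rceil}$, the reflected random walk driven by $\mu$ is the pair $(U_n,W_n)_{n\ge 0}$ of two \emph{independent} reflected random walks, where $(U_n)$ is driven by $\mu_{\lfloor r_1 \rfloor}$ and $(W_n)$ by $\mu_{\lceil r_2 \rceil}$. By hypothesis $(U_n)$ is positive recurrent on each of its essential classes and $(W_n)$ is recurrent (indeed null recurrent) on each of its essential classes; in particular every one-dimensional marginal of $\mu$ is recurrent, since a transient marginal would make the whole process transient. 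Hence \eqref{eq:multi-contr} applies to $\mu$, and, exactly as in the proof of Theorem \ref{thm:posrec}, after passing to the strongly contractive induced subsystem on each parity sheet $\Xx_{\epb}$ via the stopping times $\taub(n)$ of \eqref{eq:stop2}, we are in the setting where Propositions \ref{pro:rec} and \ref{pro:invmeas} apply. The essential classes of $(U_n,W_n)$ are the products $\mathcal{U}\times\mathcal{W}$ of an essential class $\mathcal{U}$ of $(U_n)$ and an essential class $\mathcal{W}$ of $(W_n)$, and it suffices to treat one such product.

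Next I would exhibit an invariant measure of infinite total mass. Let $\pi$ be the invariant probability measure of $(U_n)$ on $\mathcal{U}$ (Theorem \ref{thm:posrec}, resp.\ Proposition \ref{pro:invmeas}) and $\lambda$ the invariant Radon measure of $(W_n)$ on $\mathcal{W}$, which by Proposition \ref{pro:invmeas}(b) is unique up to a constant factor and satisfies $\lambda(\mathcal{W})=\infty$. Since the transition kernel of $(U_n,W_n)$ is the product of those of $(U_n)$ and $(W_n)$, Fubini's theorem shows that $\nu=\pi\otimes\lambda$ is an invariant Radon measure for $(U_n,W_n)$ on $\mathcal{U}\times\mathcal{W}$, and $\nu(\mathcal{U}\times\mathcal{W})=\infty$ (and likewise its restriction to any parity sheet has infinite mass, being a finite measure times an infinite one).

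The main point, and the only non-routine one, is recurrence of the product process, i.e.\ ruling out transience in the dichotomy of Proposition \ref{pro:rec}. This is the classical fact that the direct product of a positive recurrent Markov chain with a recurrent one is recurrent. Heuristically, for base points $u_0\in\mathcal{U}$, $w_0\in\mathcal{W}$ and $n$-step return probabilities $p_n^U,p_n^W$, one has $\sum_n p_n^U(u_0,u_0)\,p_n^W(w_0,w_0)=\infty$, because $p_n^U(u_0,u_0)$ stays bounded away from $0$ (positive recurrence of $(U_n)$) while $\sum_n p_n^W(w_0,w_0)=\infty$ (recurrence of $(W_n)$). The clean argument, which also covers the non-discrete case and sidesteps periodicity, conditions on the successive visit times $\sigma_1<\sigma_2<\cdots$ of $(W_n)$ to a fixed neighbourhood of $w_0$: these are almost surely finite, tend to infinity (their increments have infinite expectation, by null recurrence), and are independent of $(U_n)$; positive recurrence of $(U_n)$ together with a conditional Borel--Cantelli argument then shows that $(U_n)$ lies in a prescribed neighbourhood of $u_0$ at infinitely many of the times $\sigma_k$. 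Hence $(U_n,W_n)$ returns to a neighbourhood of $(u_0,w_0)$ infinitely often almost surely, so the (induced) product SDS is not transient, hence recurrent.

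Finally, since the (induced) product SDS is locally contractive and recurrent, Proposition \ref{pro:invmeas}(b) provides a unique invariant Radon measure up to a constant; by the second step this measure is proportional to the relevant restriction of $\pi\otimes\lambda$ and therefore has infinite total mass, so the process is not positive recurrent. Consequently the reflected random walk driven by $\mu$ is null recurrent on each essential class $\mathcal{U}\times\mathcal{W}$. The supplementary assertion follows by taking $r_2=1$ and choosing for $\mu_{\lceil 1 \rceil}$ a one-dimensional distribution for which one of the conditions of Proposition \ref{pro:refl-recurr} or \ref{pro:sqrt} holds while, by Corollary \ref{cor:cond0} (that is, $\Ex(Y_1)=\infty$), resp.\ by case (b) of Proposition \ref{pro:sqrt}, the one-dimensional reflected random walk is only null recurrent. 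The hard part is precisely the recurrence of the product in the third step: recurrence is in general \emph{not} preserved under independent products, and it is essential that $(U_n)$ be \emph{positive} recurrent; the periodicity and uniform-ergodicity technicalities needed to make the Borel--Cantelli argument fully rigorous in the non-lattice case are exactly what the passage to the strongly contractive induced subsystem absorbs.
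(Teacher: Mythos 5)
The paper itself offers no argument for this lemma (it is introduced with ``the following \dots is obvious''), so the comparison is really with what a careful write-up would have to contain; your overall strategy -- factor the process into two independent reflected walks, exhibit the product invariant measure $\pi\otimes\lambda$ of infinite total mass, prove recurrence of the product of a positive recurrent chain with an independent recurrent one, and conclude null recurrence from uniqueness of the invariant measure -- is the natural one and is sound in outline. However, one of your structural claims is false as stated: the essential classes of the joint RRW are in general \emph{not} the products $\mathcal{U}\times\mathcal{W}$ of essential classes of the factors, because the parity walk $\bigl(\pib(X_n^x)\bigr)$ of the joint chain is driven by the \emph{product} of the two parity laws, and the group generated by a product of supports can be strictly smaller than the product of the generated groups. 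Concretely, take $r_1=r_2=1$ with both marginals supported in the odd integers, e.g.\ on $\{-1,3\}$: each one-dimensional RRW is topologically irreducible on all of $\N_0$, but the joint parity increment is a.s.\ $(1,1)$, so $\Gamma=\{(0,0),(1,1)\}$ and $\N_0^2$ splits into the even-sum and odd-sum classes, exactly as in part (c) of the Examples of \S 4. This is not merely cosmetic for your last step: on $\mathcal{U}\times\mathcal{W}$ there are then (at least) two mutually singular invariant Radon measures, so the uniqueness statement of Proposition \ref{pro:invmeas}(b) may only be invoked on a genuine class $\Xx^{(j)}$, with the restriction of $\pi\otimes\lambda$ to it (which is still infinite, by the induced-parity-subsystem argument of Corollary \ref{cor:induced} applied to the null recurrent factor). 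The argument is repairable, but as written this step is wrong.

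The second issue is that the step you yourself identify as the only non-routine one -- recurrence of the product -- is left as a sketch whose missing ingredient is exactly the hard part: the conditional Borel--Cantelli argument needs a lower bound on the $m$-step probabilities $p^U_m(u,A)$ that is uniform in $m\ge m_0$ \emph{and} in the (random, possibly unbounded) positions $u=U_{\sigma_{k-1}}$, and passing to the induced parity subsystem removes periodicity but does not by itself deliver such a bound (the attractor of the positive recurrent factor need not be compact, and small gaps $\sigma_k-\sigma_{k-1}$ contribute nothing). A cleaner way to close this within the paper's toolkit: start the $U$-component from its stationary law $\pi$ on the relevant class; since the successive visit times $\sigma_k$ of $(W_n)$ to a compact neighbourhood $B$ are independent of $(U_n)$, stationarity gives $\Prob[U_{\sigma_k}\in A]=\pi(A)>0$ for every $k$, whence by Fatou $\Prob[(U_n,W_n)\in A\times B \text{ i.o.}]\ge\pi(A)>0$; thus some deterministic starting point has positive probability of infinitely many returns to the compact set $A\times B$, which rules out the transient alternative in Proposition \ref{pro:rec} (local contractivity of the joint SDS being available exactly as in the proof of Theorem \ref{thm:posrec}, since all one-dimensional marginal RRWs are recurrent), so the joint process is recurrent, and your infinite-invariant-measure argument then yields null recurrence. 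With these two repairs your proposal is a correct and complete proof of a statement the paper leaves unproved; the concluding remark on the case $r_2=1$ is fine.
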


The following provides a class of examples regarding null recurrence in dimension $2$.

\begin{lem}\label{lem:E-P}
Let $\mu_1$ and $\mu_2$ be probability measures on $\Z$ which satisfy \eqref{eq:gcd}.
Suppose they have exponential moments of all orders and are centred.
Then RRW on $\N_0^2$ induced by $\mu_1 \otimes \mu_2$ is null recurrent on its 
essential classes. 
\end{lem}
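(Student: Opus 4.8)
The plan is to apply Lemma \ref{lem:independent} (with $r_1 = r_2 = 1$), which reduces the statement to the following two facts about the one-dimensional marginals: first, that the RRW on $\N_0$ driven by a centred lattice measure with good moments is \emph{recurrent}; second, that it is \emph{not positive recurrent}, i.e. genuinely null recurrent. For this to work I first need to check the hypotheses of Lemma \ref{lem:independent}: the product structure is given, and each $\mu_i$ satisfies \eqref{eq:gcd} by assumption. But Lemma \ref{lem:independent} as stated wants one factor positive recurrent and one null recurrent; since here \emph{both} one-dimensional RRWs are null recurrent, I should first observe (or state a trivial variant of that lemma) that null recurrence of one factor together with recurrence of the other already forces null recurrence of the product — the product is recurrent because the joint trajectory visits any product neighbourhood infinitely often (using Lemma \ref{lem:symmetric}-style independence of the two coordinates and recurrence of each), and it cannot be positive recurrent because the expected return time to a box dominates the expected return time of the null-recurrent marginal, which is infinite.

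So the core is the one-dimensional claim: if $\mu_i$ is centred, lattice (satisfying \eqref{eq:gcd}), and has exponential moments of all orders, then RRW on $\N_0$ driven by $\mu_i$ is null recurrent on $\Ll_i = \N_0$. Recurrence follows from Lemma \ref{lem:symmetric}? No — $\mu_i$ need not be symmetric. Instead I would use the ladder-epoch reduction from \S 3.C: since $\mu_i$ is centred with finite first moment, $\limsup S_n = \infty$ a.s. (case (b) of the discussion preceding Proposition \ref{pro:sqrt}), so I may pass to the embedded RRW driven by the ascending ladder height distribution $\overline\mu_i$. Because $\mu_i$ has exponential moments of all orders and is centred, the ascending ladder height $\overline Y$ has a finite \emph{second} moment — indeed, for a centred walk with exponential moments the ladder height has moments of every order (this is classical Wiener–Hopf / Spitzer-type asymptotics: $\Ex(\overline Y^{\,p}) < \infty$ whenever $\Ex((Y^+)^{p+1}) < \infty$), so in particular $\Ex\bigl(\sqrt{\overline Y}^{\,3}\bigr) = \Ex(\overline Y^{\,3/2}) < \infty$. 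Then Proposition \ref{pro:sqrt}(b) applied to the \emph{embedded} walk (whose increments $\overline Y$ are non-negative with $\Ex(\overline Y^-) = 0 = \Ex(\overline Y^+)$... wait, $\overline Y \ge 0$, so this is really the non-negative case) — more cleanly, apply Proposition \ref{pro:refl-recurr}: condition \eqref{cond1}, $\Ex(\sqrt{\overline Y}) < \infty$, holds, giving recurrence of the embedded RRW, hence of the original one. For null recurrence (not positive): by Corollary \ref{cor:cond0} applied to the embedded walk, positive recurrence of the embedded RRW would require $\Ex(\overline Y) < \infty$ — which does hold here! So I must instead argue directly on the original walk: by Corollary \ref{cor:cond0}, positive recurrence of the original RRW needs $\Ex(Y_i) < \infty$, but $\Ex(Y_i) = 0$ since $\mu_i$ is centred... that gives positive recurrence, contradiction.

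Let me reconsider — this means the \emph{correct} reading is that centred $\mu_i$ with two-sided support is \emph{not} covered by Corollary \ref{cor:cond0} (which is stated only for $\supp\mu \subset [0,\infty)$); for genuinely two-sided centred increments the RRW is null recurrent, which is exactly the content of Proposition \ref{pro:sqrt}(b). So the clean route is: $\mu_i$ centred with exponential moments $\Rightarrow$ $\Ex(\sqrt{Y_i^+}^{\,3}) < \infty$ trivially, and $0 < \Ex(Y_i^-) = \Ex(Y_i^+) < \infty$ (strict positivity because \eqref{eq:gcd} forces non-degeneracy), so Proposition \ref{pro:sqrt}(b) directly gives that the one-dimensional RRW driven by $\mu_i$ is null recurrent on $\Ll_i = \N_0$. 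Then the two-dimensional statement follows from the product version of Lemma \ref{lem:independent} discussed above: the RRW driven by $\mu_1 \otimes \mu_2$ factors into two independent null-recurrent coordinates, is recurrent (joint infinitely-often visits to product neighbourhoods, by independence and recurrence of each factor), and is not positive recurrent (its return time to a box dominates that of either null-recurrent factor). The one point needing care — the likely main obstacle — is justifying recurrence of the \emph{product} of two null-recurrent recurrent chains: this is not automatic in general, but here it is fine because the two coordinates are genuinely independent, each is topologically recurrent on all of $\N_0$, and one can run a Borel–Cantelli / renewal argument on the return times of the first coordinate to a point, during which the second coordinate (an independent recurrent chain observed along those return times) still returns to any fixed point infinitely often. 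This is exactly the kind of argument made rigorous in \S 3 and the references, so I would cite Lemma \ref{lem:independent} and the discussion there rather than redo it.
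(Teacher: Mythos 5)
There is a genuine gap, and it sits exactly at the step you flag as ``the likely main obstacle'': recurrence of the product. Your one-dimensional analysis is fine (each marginal RRW is null recurrent by Proposition \ref{pro:sqrt}(b), and you correctly note that Corollary \ref{cor:cond0} does not apply to two-sided increments), but the claim that independence of the two coordinates plus topological recurrence of each factor yields recurrence of the two-dimensional process is false, and no ``product version'' of Lemma \ref{lem:independent} can repair it: that lemma uses \emph{positive} recurrence of one factor in an essential way, since only then do the return times of that factor have finite expectation, so that the other coordinate observed along them remains recurrent (this is the mechanism of Theorem \ref{thm:r-s-rec}). With two null-recurrent factors the return times have infinite mean and the subordinated second coordinate can be transient. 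The paper's own Example \ref{ex:sub} is a direct counterexample to your argument: for $\alpha\in(3/4,1)$ the measure $\mu_\alpha$ is symmetric, centred, and has finite moment of order $3/2$, so each one-dimensional RRW satisfies Proposition \ref{pro:sqrt}(b) and is null recurrent, yet the RRW on $\N_0^2$ driven by $\mu_\alpha\otimes\mu_\alpha$ is transient. Your proposed Borel--Cantelli/renewal argument would ``prove'' recurrence in that case as well, so it cannot be correct. A telltale sign is that your proof never really uses the hypothesis of exponential moments of all orders, only moments of order $3/2$; the lemma would then be false.

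What the hypothesis buys, and what the paper actually does, is quantitative: by Essifi and Peign\'e \cite{EsPe}, under exponential moments and centering one has the local limit estimate $\Prob[X_{n,i}^x=y]\sim C^{(i)}_y\,n^{-1/2}$ with $C^{(i)}_y>0$, for $i=1,2$. By independence the two-dimensional return probabilities are asymptotic to $C\,n^{-1}$, whose sum diverges, which gives recurrence of the product chain on its essential classes; since these probabilities tend to $0$, the chain cannot be positive recurrent, whence null recurrence. So the correct route is not a soft factorisation argument but the precise $n^{-1/2}$ decay of the one-dimensional return probabilities, and this is precisely where the exponential moment assumption enters.
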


\begin{proof}
Under the above assumptions, it was shown by {\sc Essifi and Peign\'e}~\cite{EsPe}
that for all $x, y \in \N_0$ 
$$
\Prob[X_{n,i}^x = y] \sim C^{(i)}_y \, n^{-1/2}\quad \text{as }\; n \to \infty,
$$
where $C^{(i)}_y > 0$, for $i=1,2$. The statement follows.
\end{proof}

With weaker moment conditions, one can well have two independent RRWs, each of which
is null recurrent, while the resulting two-dimensional RRW is transient.

\begin{exa}\label{ex:sub}
On $\Z$, let $(Y_n)$ be equidistributed on $\{\pm 1 \}$, so that $S_n=Y_1 + \dots + Y_n$
is simple random walk. Let $\bigl(\tau(n)\bigr)_{n \ge 0}$ be a 
sequence of random times which is independent of $(Y_n)$ and such that
$\tau(0)=0$ and  $\tau(n) - \tau(n-1)$ are i.i.d. $\N$-valued. The associated
\emph{subordinated random walk} is 
$$
S_{\tau(n)} = \wt Y_1 + \dots + \wt Y_n\,,\quad \text{where}\quad
\wt Y_k = Y_{\tau(k-1) + 1} + \dots +  Y_{\tau(k)}\,.
$$
Now let $0 < \alpha < 1$ and consider $\tau(n) = \tau_{\alpha}(n)$, where 
$$
\Prob[\tau_{\alpha}(n)-\tau_{\alpha}(n-1) = k] = 
\frac{\alpha\,\Gamma(k-\alpha)}{k! \, \Gamma(1-\alpha)} \sim 
\frac{\alpha}{\Gamma(1-\alpha)}\, \frac{1}{k^{1+\alpha}} 
\quad (1 \le k \to \infty).
$$
By  {\sc Bendikov and Saloff-Coste}~\cite[Thm.3.4]{BeSa},
$$
\Prob[S_{\tau(2n)} = 0] \simeq n^{-\frac{1}{2\alpha}}\,,
$$
where $\simeq$ means asymptotic equivalence of sequences.
Let $\mu_{\alpha}$ be the distribution of $\wt Y_1\,$.
We see that $(S_{\tau(n)})$, the symmetric random walk on
$\Z$ with law $\mu_{\alpha}\,$, as well as the associated 
RRW on $\N_0$ are recurrent if and only if $\alpha \ge 1/2$.

Now consider $\mu = \mu_{\alpha} \otimes \mu_{\alpha}$ on
$\Z^2$. It is fully symmetric, and we get that for any
$\alpha \in (0\,,\,1)$, the random walk induced by $\mu$
with reflection in none, one or both coordinates is
transient.\qed
\end{exa}


\section{Reflected plus non-reflected coordinates}

We now consider the situation of \eqref{eq:process} in dimension $r+s$ with $s \in \{1,2\}$, and
state space as in \eqref{eq:statespace}. As before, we write $\mu_{\lfloor r\rfloor}$
and $\mu_{\lceil s \rceil}$ for the overall marginal distributions of $\mu$ in the first
$r$ and last $s$ variables, respectively.
 
\begin{thm}\label{thm:r-s-rec}
 Suppose that $\mu_{\lfloor r\rfloor}$ satisfies the assumptions of Theorem \ref{thm:posrec},
so that the associated reflected random walk $(X_n^x)$ on 
$\N_0^{r_1}\times \R_+^{r_2}$ is positive recurrent. 

If  $\mu_{\lceil s \rceil}$ has finite moment of order $s$, then the process $(X_n^x, Z_n)$ 
of \eqref{eq:process}
is (topologically) recurrent if and only if $\mu_{\lceil s \rceil}$ is centred.
\end{thm}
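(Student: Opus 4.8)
The plan is to treat the two directions separately, the "only if" being the easy half. If $\mu_{\lceil s\rceil}$ is not centred, then by the classical law of large numbers $\|Z_n\|\to\infty$ almost surely (when $s=1$, or $s=2$ with a non-zero mean vector), so the last $s$ coordinates of $(X_n^x,Z_n)$ escape every compact set, and the process is transient; when $s=2$ and one marginal is centred while the other is not, the same escape happens in the non-centred coordinate. So assume from now on that $\mu_{\lceil s\rceil}$ is centred, with finite moment of order $s$, and we must prove topological recurrence of $(X_n^x,Z_n)$ on each essential class $\Xx^{(j)}\times\Z^{s_1}\times\R^{s_2}$.

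The first main step is the discrete (lattice) reflected part. Here I would use the decomposition from \S 4: pass to the induced process at the stopping times $\taub(n)$ of \eqref{eq:stop2}, on which the reflected part $(X_{\taub(n)}^x)$ is positive recurrent on a finite/countable essential class $\Ll_{\epb}$, indeed strongly contractive. The key point is that the time-changed non-reflected walk $Z_{\taub(n)}$ is a sum of i.i.d.\ increments $\wt V_k = V_{\taub(k-1)+1}+\dots+V_{\taub(k)}$, which is still centred (since $\taub(1)$ has finite expectation, $\Ex(\wt V_1)=\Ex(\taub(1))\,\Ex(V_1)=0$ by Wald) and has finite moment of order $s$ provided $\taub(1)$ has enough moments — and it does, being geometrically controlled by the finite factor chain on the hypercube. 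Then on each fixed state of the reflected part, $Z_{\taub(n)}$ restricted to the return times to that state is an i.i.d.\ centred walk on $\Z^{s_1}\times\R^{s_2}$ with finite $s$-th moment, hence recurrent by Chung--Fuchs; combining with positive recurrence of the reflected part via a standard Borel--Cantelli / renewal argument gives that $(X_n^x,Z_n)$ returns infinitely often to any neighbourhood of a point of $\Ll^{(j)}\times\{$any point$\}$.

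The second, harder main step is the non-discrete case ($r_2\ge 1$): here the reflected part is not a Markov chain on a countable set, only positive Harris-type recurrent via local contractivity, so one cannot simply look at return times to a single state. This is where the ergodic-theoretic tools flagged in the introduction enter. The idea is to realize $(X_n^x)$ together with its invariant probability $\nu^{(j)}$ as a stationary, ergodic dynamical system (the shift on trajectory space, started from $\nu^{(j)}$, or the natural extension), and to view $Z_n=V_1+\dots+V_n$ as a cocycle over this system: $V_n = \varphi\circ T^{\,n-1}$ for a suitable function $\varphi$ into $\Z^{s_1}\times\R^{s_2}$ with $\int\varphi\,d\Prob = \Ex(V_1)=0$ and finite $s$-th moment. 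Then the theorems of Atkinson (for $s=1$) and Schmidt (for $s=2$), on recurrence of cocycles over ergodic measure-preserving systems with integrable mean-zero, resp.\ square-integrable mean-zero, fibre, give that $Z_n$ returns to a neighbourhood of $0$ infinitely often almost surely; coupling this with topological recurrence of $(X_n^x)$ on $\Ll^{(j)}$ (which gives returns of the reflected coordinates to any open set along a density-one set of times, by Proposition \ref{pro:rec}) yields joint recurrence of $(X_n^x,Z_n)$. The subtle points — and what I expect to be the main obstacle — are: (i) checking that $Z_n$ is genuinely a cocycle over the \emph{ergodic} stationary system, which requires knowing that the Markov chain $(X_n^x)$ with stationary measure $\nu^{(j)}$ is ergodic (this should follow from uniqueness of $\nu^{(j)}$ in Theorem \ref{thm:posrec} and a $0$--$1$ law), and (ii) transferring the conclusion from the stationary version to an arbitrary starting point $x\in\Xx^{(j)}$, for which one uses that $(X_n^x)$ is absorbed by $\Ll^{(j)}$ together with absolute continuity / coupling of the trajectory with a stationary one. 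Once the cocycle is in place, Atkinson's and Schmidt's theorems do the real work.
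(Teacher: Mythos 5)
Your treatment of the ``only if'' direction and of the purely lattice case ($r_2=0$) is essentially the paper's argument: induce on return times of the reflected part, note the induced increments $\wt V_k$ are i.i.d., use Wald's identity for the mean (and the second moment when $s=2$; note that here finite expectation of the return time already suffices, so the appeal to ``geometric control by the hypercube factor chain'' is unnecessary and, for the return time of the full chain to a single state, not even correct -- but harmless).

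The genuine gap is in the non-lattice case, at the point where you combine the two recurrences. You propose to apply Atkinson ($s=1$) and Schmidt ($s=2$) to $Z_n$ viewed as a cocycle over the stationary system, concluding only that $Z_n$ returns near $0$ infinitely often, and then to ``couple'' this with recurrence of $(X_n^x)$, claiming that Proposition \ref{pro:rec} gives returns of $X_n^x$ to any open set along a \emph{density-one} set of times. That claim is false: topological (even positive) recurrence gives returns along a set of times of positive density (via Birkhoff), not density one, while the times at which a null-recurrent $Z_n$ is near $0$ typically have density zero; so the two sets of times need not intersect, and ``infinitely often for each marginal'' does not yield joint recurrence. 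This is precisely the difficulty the paper's proof is organised around: it first induces on the return times $\tb(n)$ of the reflected part to a fixed compact neighbourhood $U$ of $x\in\Ll^{(j)}$, proves (via a martingale argument and ergodicity of the induced chain with its unique invariant probability $\nu_U$) that the increments $\wt V_n=Z_{\tb(n)}-Z_{\tb(n-1)}$ form a stationary \emph{ergodic} sequence under $\Prob_{\nu_U}$, and only then applies Atkinson to the sums $Z_{\tb(n)}$ when $s=1$, resp.\ Anscombe--R\'enyi plus Schmidt when $s=2$; since $X_{\tb(n)}^x\in U$ by construction, recurrence of $Z_{\tb(n)}$ near $0$ \emph{is} joint recurrence. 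A second, related inaccuracy: Schmidt's theorem is not ``square-integrable and centred implies recurrent'' for $s=2$; it requires a CLT-type normalisation hypothesis, which the paper verifies for $Z_{\tb(n)}$ via the classical CLT together with $\tb(n)/n\to\Ex(\tb(1))$ and the Anscombe--R\'enyi theorem. Your plan could be repaired without inducing by invoking the stronger, skew-product (conservativity) form of the Atkinson/Schmidt results, which gives simultaneous returns of $(X_n,Z_n)$ to product neighbourhoods directly; but the coupling argument as you state it does not work, and this is the heart of the proof rather than a technicality.
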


Here, we mean that when $\Ll^{(j)}$ is one of the essential classes \eqref{eq:Lj}
of  $(X_n^x)$ according to Theorem \ref{thm:posrec}, then for each $x \in \Ll^{(j)}$
and $v \in \R^s$, the process $(X_n^x, v + Z_n)$ returns to any neighbourhood of
$(x,v)$ infinitely often with probability $1$. Of course, when  $\mu_{\lceil s \rceil}$ is lattice,
there are infinitely many returns to $(x,v)$ itself. Note that we may assume w.l.o.g.
that $v=0$. We also remark here that $(X_n^x\,,Z_n)$ is a typical case of a 
\emph{Markov random walk} or \emph{random walk with internal degrees of freedom} with positive
recurrent \emph{driving Markov chain} $(X_n^x)$. There is an ample literature on processes
of this type, see e.g. {\sc Jacod}~\cite{Jac}, {\sc Kr\'amli and Sz\'asz}~\cite{KrSz}, 
{\sc Babillot}~\cite{Bab} or {\sc Uchiyama}~\cite{Uch} and the references in those
papers. 

\begin{proof}
Because this is considerably simpler, we first consider the case when  $\mu_{\lfloor r\rfloor}$ 
is purely lattice, that is, $r_2=0$.
Let $x \in \Ll^{(j)}$, and let $\tb(n)$ be the successive return times
of $(X_n^x)$ to $x$, with $\tb(0)=0$. They have i.i.d. increments  with
finite expectation by positive recurrence. Then 
$$
Z_{\tb(n)} = \wt V_1 + \dots + \wt V_n\,,\quad\text{where}   \quad \wt V_k = Z_{\tb(k)}-Z_{\tb(k-1)}\,,
$$
and the $\wt V_k$ are i.i.d. By Wald's identity, 
$$
\Ex(\,\wt V_1) = \Ex\bigl(\tb(1)\bigr) \,\Ex(V_1)\,,
$$
and if $V_1$ has finite second moment then so does $\wt V_1\,$.
The result follows.

\smallskip

The situation is more complicated when the reflected part is not
purely lattice. In this case, we start with a compact neighbourhood $U$ of
some point $x \in \Ll^{(j)}$. 
We know that for any $y \in U$, the chain $(X_n^y)$ returns
to $U$ almost surely. Thus, we can consider the induced process $(X_{\tb(n)}^y)$ on $U$,
where $\tb(n)$ are the times of the successive visits to $U$. Note that they depend on the
starting point $y$ and do not have i.i.d. increments. In any case, it is a well known fact
that the normalised restriction $\nu_U=\nu^{(j)}_U$ of the invariant probability measure 
$\nu^{(j)}$ to $U$ is an invariant probability for the induced process. 

We shall use a method of \cite[Thm. 4.1]{BaBoEl}.
For any probability distribution $\nu$ supported in $\Ll^{(j)}$, we let 
$\Prob_{\nu} = \Prob_{(\nu,0)}$ be the probability on the trajectory space of 
$(X_n^x, v+Z_n)$,  where $(X_n^x)$  has starting distribution $\nu$ -- so that 
we might as well use the notation $(X_n^{\nu})$ -- while $(S_n)$ starts at $v= 0$. In other words,
$$
\Prob_{\nu} = \int \Prob_{(x,0)} \, d\nu(x)\,,
$$
where in general $\Prob_{(x,v)}$ refers to starting the process at the deterministic point $(x,v)$.

Since $\nu^{(j)}$ is the unique invariant probability for the original process on 
$\Xx^{(j)}$, resp. $\Ll^{(j)}$, also $\nu_U$ is the unique invariant probability 
for the induced process.  Therefore  the induced process on $\Ll^{(j)}$ with initial 
distribution $\nu_U$ -- which we denote by $(X_{\tb(n)}^{\nu_U})$ -- is not only stationary, 
but ergodic under $\Prob_{\nu_U}\,$ -- see e.g. {\sc Hern\'andez-Lerma and 
Lasserre}~\cite[Prop. 2.4.3]{HeLa}

\smallskip

\noindent
\emph{Claim.} The sequences of random variables $(\wt V_n)$ and $\bigl(\tb(n)-\tb(n-1)\bigr)$ are
stationary ergodic under $\Prob_{\nu_U}\,$.

\smallskip

\noindent
\emph{Proof of the claim.} Stationarity is straightforward, and contained in the
first part of the following. 
Let $(\mathcal{F}_n)$ be the filtration
of the $\sigma$-algebra on the trajectory space generated by $(X_n^x, Z_n)$. 
Take a measurable function $\phi: \R^{\N} \to  \R^+$. Then, since the distribution
of  $X_{\tb(n)}^{\nu_U}$ is $\nu_U$ and the transitions of $(Z_n)$ are translation invariant,
%
%
%
$$
\begin{aligned}
\Ex_{\nu_U}\bigl( \phi(\wt V_{n+1}\,, \wt V_{n+2}\,,\dots)\mid \mathcal{F}_{\tb(n)}\bigl) 
&= \Ex_{(X_{\tb(n)}^{\nu_U}, Z_{\tb(n)})}\bigl(\phi(\wt V_1\,, \wt V_2\,, \dots)\bigr)\\
&= \Ex_{(X_{\tb(n)}^{\nu_U}, 0)}\bigl(\phi(\wt V_1\,, \wt V_2\,, \dots)\bigr)
= \Ex_{\nu_U}\bigl( \phi(\wt V_1\,, \wt V_2\,,\dots)\bigl)\,.
\end{aligned}
$$
Now suppose in addition that $W =  \phi(\wt V_1\,, \wt V_2\,,\dots)$ is measurable
with respect to the invariant $\sigma$-algebra of $(\wt V_1\,, \wt V_2\,,\dots)$,
so that $W = \phi(\wt V_{n}\,, \wt V_{n+1}\,,\dots)$ for each $n$. Then by martingale
convergence and the above,
$$
W = \lim_{n\to \infty}\Ex_{\nu_U}(W \mid \mathcal{F}_{\tb(n)}) 
=  \lim_{n\to \infty}\Ex_{(X_{\tb(n)}^{\nu_U}, 0)}(W)\,.
$$
Therefore $W$ is also an invariant function of $(X_{\tb(n)}^{\nu_U})$, which is ergodic,
so that $W$ is $\Prob_{\nu_U}$-almost surely constant. This shows ergodicity
of $(\wt V_n)$. The proof for the increments $\bigl(\tb(n)-\tb(n-1)\bigr)$ is analogous.


\smallskip

Having proved the Claim, we recall that
as in the lattice case $\Ex\bigl(\tb(1)\bigr) < \infty\,$,
and by Wald's identity $\Ex(\,\wt V_1) =  0$ if and only $\Ex(V_1)=0$.

\smallskip

If $s=1$, then we see that under $\Prob_{\nu_U}$, the random walk $(Z_{\tb(n)})$ on $\R$ arises
from the sums of the stationary ergodic sequence of the random variables $\wt V_n\,$, which
have finite expectation.
By a theorem of \cite{At},   $(Z_{\tb(n)})$ is recurrent (= returns
infinitely often to any neighbourhood of $0$ with probability $1$) if and only if $V_1$ is centred.
This proves that $(X_n^x, v+Z_n)$ is recurrent (where RRW is considered one one of
its attractors $\Ll^{(j)})$ if and only if $\Ex(V_1)=0$.
 
If $s=2$, then our assumption is that $\Ex(V_1^2) < \infty$, so that $Z_n$ satisfies the 
Central Limit Theorem. If $\Ex(V_1) \ne 0$ then we have of course transience.
So suppose that $\Ex(V_1)=0$. Then $Z_n/\sqrt{n}$ converges in law to a non-degenerate $2$-dimensional
centred normal distribution. By Birkhoff's Ergodic Theorem, $\tb(n)/n \to \Ex\bigl(\tb(1)\bigr)$
almost surely under $\Prob_{\nu_U}\,$. Then, by an old theorem of {\sc R\'enyi}~\cite{Re} 
(going back to {\sc Anscombe}~\cite{An}), also
$Z_{\tb(n)}/\sqrt{\tb(n)}$ is asymptotically normal with the same limit distribution. 
Now we can apply the theorem of \cite{Schm}
to deduce that $(Z_{\tb(n)})$ is recurrent. This concludes the proof.
\end{proof}

\begin{proof}[Proof of Corollary \ref{cor:dim2}] Let $(X_n^x)$ be RRW induced by $\mu_{\lfloor r \rfloor}$ and $(v+Z_n)$
be the ordinary random walk induced by $\mu_{\lceil s \rceil}$. 
By Theorem \ref{thm:r-s-rec}, the process $(X_n^x, v+S_n)$ is recurrent on its
essential classes. A straighforward adaptation of Lemma \ref{lem:symm} yields
that we also have recurrence when there is reflection in the last two coordinates.
\end{proof}

Note that the last phrase of the proof remains true also when $s=2$ and there
only is reflection in one of the last two coordinates, while the other coordinate remains
non-reflected.
This observation together with Corollary \ref{cor:dim2} and Theorem \ref{thm:r-s-rec}
clarifies that there can not be a general result on recurrence with more than
two null-recurrent coordinates, be they reflecting or ``free''.

We conclude with an open problem. Suppose that $r=s=1$, so that we have reflection in
the first coordinate only, and no reflection in the second one. Also suppose that the second
marginal gives rise to a recurrent (ordinary) random walk (e.g.,
having finite first moment and being centred.) Provide general recurrence criteria, when
the reflected process in the first coordinate is null-recurrent. 

\end{document}